% SIAM Article
\documentclass[hidelinks, onefignum,onetabnum]{siamart220329}
%Teresa: Do we want these suggested by SIAM: review,hidelinks,onefignum,onetabnum
%Or these that we had ourselves: a4paper,UKenglish,numberwithinsect

% Information that is shared between the article and the supplement
% (title and author information, macros, packages, etc.) goes into
% ex_shared.tex. If there is no supplement, this file can be included
% directly. There is no supplement, therefore we included it here directly

% Packages and macros go here

%Here the ones we included:
\usepackage{graphicx}
\usepackage{color}
\usepackage{amsmath}
\usepackage{amssymb}
\usepackage{xspace}
\usepackage{epsfig}
\usepackage{soul}
\usepackage{xfrac}
\usepackage{apptools} %to make the appendix-section lemmata labelled A.1, ...

%Here the ones SIAM example article included:
\usepackage{amsfonts}
\usepackage{epstopdf}
\ifpdf
  \DeclareGraphicsExtensions{.eps,.pdf,.png,.jpg}
\else
  \DeclareGraphicsExtensions{.eps}
\fi
\usepackage{amsopn}

%added by SIAM:
% Add a serial/Oxford comma by default.

% Used for creating new theorem and remark environments
%What we had:
%\theoremstyle{plain}
%\newtheorem{main_theorem}[theorem]{\textbf{Main Theorem}}
%\newtheorem*{supermaintheorem*}{Main Theorem}
%\newtheorem{main_corollary}[theorem]{\textbf{Main Corollary}}
%\newtheorem*{supermaincorollary*}{Main Corollary}
%What SIAM example template had:
%\newsiamremark{remark}{Remark}
%\newsiamremark{hypothesis}{Hypothesis}
%\crefname{hypothesis}{Hypothesis}{Hypotheses}
%\newsiamthm{claim}{Claim}
%But I don't think we need any of them.

% Sets running headers as well as PDF title and authors
\headers{Brillouin Zones of Integer Lattices and Their Perturbations}
{Edelsbrunner, Garber, Ghafari, Heiss, Saghafian, and Wintraecken}
%{H.\,Edelsbrunner, A.\,Garber, M.\,Ghafari, T.\,Heiss, M.\,Saghafian, M.\,Wintraecken}
%{H. Edelsbrunner, A. Garber, M. Ghafari, T. Heiss, M. Saghafian, and M. Wintraecken} %TODO: Even without the "and" and even with only small spaces between initial and name, the list of authors is too long. I therefore removed first name initials. Is that okay?

% Title. If the supplement option is on, then "Supplementary Material"
% is automatically inserted before the title.
\title{Brillouin Zones of Integer Lattices and Their Perturbations\thanks{Submitted to the editors on April 17, 2022. %which date? This is the very first submission date.
\funding{The second author is partially supported by the Alexander von Humboldt Foundation.
  The sixth author is supported by the European Union's Horizon 2020 research and innovation programme under the Marie Sk{\l}odowska-Curie grant agreement no.\ 754411, and by the Austrian Science Fund (FWF), grant no.\ M-3073.
  All other authors are supported by the European Research Council (ERC), grant no.\ 788183, by the Wittgenstein Prize, Austrian Science Fund (FWF), grant no.\ Z 342-N31, and by the DFG Collaborative Research Center TRR 109, Austrian Science Fund (FWF), grant no.\ I 02979-N35.}}}

% Authors: full names plus addresses.
\author{Herbert Edelsbrunner\thanks{IST Austria (Institute of Science and Technology Austria), Kloster\-neu\-burg, Austria
  (\email{herbert.edelsbrunner@ist.ac.at}, \email{teresa.heiss@ist.ac.at}, \email{morteza.saghafian@ist.ac.at}).}
\and Alexey Garber\thanks{School of Mathematical and Statistical Sciences, University of Texas Rio Grande Valley, Brownsville, TX, USA 
  (\email{alexey.garber@utrgv.edu}).}
\and Mohadese Ghafari\thanks{Khoury College of Computer Sciences, Northeastern University, Boston, MA, USA 
  (\email{ghafari.m@northeastern.edu}).}
\and Teresa Heiss\footnotemark[2]
\and Morteza Saghafian\footnotemark[2]
\and Mathijs Wintraecken\thanks{Inria centre Universit{\'e} C{\^o}te d'Azur, France 
  (\email{mathijs.wintraecken@inria.fr}).}}

%\DeclareMathOperator{\diag}{diag} %suggested in example article by SIAM, but not needed here.

% Optional PDF information
\ifpdf
\hypersetup{
  pdftitle={Brillouin Zones of Integer Lattices and Their Perturbations},
  pdfauthor={H. Edelsbrunner, A. Garber, M. Ghafari, T. Heiss, M. Saghafian, and M. Wintraecken}
}
\fi

% The next statement enables references to information in the
% supplement. See the xr-hyperref package for details.
%\externaldocument[][nocite]{ex_supplement} %does not seem necessary, because we don't have a supplement

% FundRef data to be entered by SIAM
%<funding-group specific-use="FundRef">
%<award-group>
%<funding-source>
%<named-content content-type="funder-name"> 
%</named-content> 
%<named-content content-type="funder-identifier"> 
%</named-content>
%</funding-source>
%<award-id> </award-id>
%</award-group>
%</funding-group>

%Additionmal commands and so on by us:

\AtAppendix{\counterwithin{theorem}{section}} %to make the appendix-section lemmata labelled A.1, ...

\newcommand {\mm}[1] {\ifmmode{#1}\else{\mbox{\(#1\)}}\fi}

\newcommand {\scalprod}[2] {{\langle #1 , #2 \rangle}}

\newsavebox{\smallProofsym}                 
\savebox{\smallProofsym}                               %
{
\begin{picture}(6,6)
\put(0,0){\framebox(6,6){}}
\put(0,2){\framebox(4,4){}}
\end{picture} 
}  

%% This makes the legend font small
\makeatletter
\long\def\@makecaption#1#2{%
  \vskip\abovecaptionskip
  \sbox\@tempboxa{\small #1: #2}%
  \ifdim \wd\@tempboxa >\hsize
    \small #1: #2\par
  \else
    \global \@minipagefalse
    \hb@xt@\hsize{\hfil\box\@tempboxa\hfil}%
  \fi
  \vskip\belowcaptionskip}
\makeatother

\newcommand{\Rspace}        {\mm{{\mathbb R}}}
\newcommand{\Sspace}        {\mm{{\mathbb S}}}
\newcommand{\Zspace}        {\mm{{\mathbb Z}}}
\newcommand{\Acal}          {\mm{{\mathcal A}}}

\newcommand{\BZone}[3]      {\mm{{\rm Zone}_{#1}{({#2},{#3})}}}
\newcommand{\Brillouin}[2]  {\mm{{\rm Bri}_{#1}{({#2})}}}

\newcommand{\Volume}[1]     {\mm{{\rm vol}{({#1})}}}

\newcommand{\domain}[3]     {\mm{{\rm dom}_{#1}{({#2},{#3})}}}

\newcommand{\ksets}[3]      {\mm{f_{#1}^{(#2)}{(#3)}}}
\newcommand{\card}[1]       {\mm{{\#}{#1}}}

\newcommand{\interior}[1]   {\mm{\rm int\,}{#1}}
\newcommand{\norm}[1]       {\mm{\|{#1}\|}}
\newcommand{\Edist}[2]      {\mm{\|{#1}-{#2}\|}}

\newcommand{\ee}            {\mm{\varepsilon}}

%\bibliographystyle{plainurl}

%\Copyright{Herbert Edelsbrunner, Alexey Garber, Mohadese Ghafari, Teresa Heiss, Morteza Saghafian and Mathijs Wintraecken}

%\ccsdesc[100]{Theory of computation~Computational geometry}

\begin{document}

\maketitle

% REQUIRED
\begin{abstract}
  For a locally finite set, $A \subseteq \Rspace^d$, the \emph{$k$-th Brillouin zone} of $a \in A$ is the region of points $x \in \Rspace^d$ for which $\Edist{x}{a}$ is the $k$-th smallest among the Euclidean distances between $x$ and the points in $A$.
  If $A$ is a lattice, the $k$-th Brillouin zones of the points in $A$ are translates of each other, which tile space.
  Depending on the value of $k$, they express medium- or long-range order in the set.
  We study fundamental geometric and combinatorial properties of Brillouin zones, focusing on the integer lattice and its perturbations.
  Our results include the stability of a Brillouin zone under perturbations, a linear upper bound on the number of chambers in a zone for lattices in $\Rspace^2$, and the convergence of the maximum volume of a chamber to zero for the integer lattice.
\end{abstract}

% REQUIRED
\begin{keywords}
Brillouin zones, Voronoi tessellations, plane arrangements, Gauss circle problem, asymptotic analysis %FYI: We had a mix of commas and semicolons, I changed it to all commas (like in the example article by SIAM)
\end{keywords}

% REQUIRED
\begin{MSCcodes}
52C22, 52C35, 05B45
\end{MSCcodes}

%%%%%%%%%%%%%%%%%%%%%%%%%%%%%%%%%%%%%%%%%%%%%%%
%%%%%%%%%%%%%%%%%%%%%%%%%%%%%%%%%%%%%%%%%%%%%%%
\section{Introduction}
\label{sec:1}
%%%%%%%%%%%%%%%%%%%%%%%%%%%%%%%%%%%%%%%%%%%%%%%
%%%%%%%%%%%%%%%%%%%%%%%%%%%%%%%%%%%%%%%%%%%%%%%

Brillouin zones were introduced by L\'{e}on Brillouin \cite{Bri30,Bri46} to describe quantum properties of crystals modeled as lattices in $\Rspace^3$.
Given a locally finite set, $A \subseteq \Rspace^d$, and a specific point, $0 \in A$, we introduce regions of $\Rspace^d$ based on the distances to the points in $A$.
Indeed, the \emph{$k$-th Brillouin zone} of $0$ in $A$ consists of the points $x \in \Rspace^d$ such that at most $k-1$ points in $A \setminus \{0\}$ are closer than $0$ to $x$, and at least $k-1$ points in $A \setminus \{0\}$ are at the same distance or closer than $0$ to $x$.
For $k \geq 2$, it consists of a collection of chambers in the arrangement of bisectors between $0$ and other points in $A$---known as \emph{Bragg planes}---which form a thickened sphere surrounding $0$.
They have been used to analyze the soft density of lattices \cite{EdIg18} and to construct fingerprints of crystal structures modeled as locally finite periodic sets \cite{EHKSW21}.
If $A$ is a lattice, then every Brillouin zone of every point in $A$ has the same ($d$-dimensional) volume, which is equal to the volume of the lattice's unit cell; see \cite{Bie39}.
Among other questions, we probe to what extent this long-range behavior changes when we perturb the lattice.
More generally, we study fundamental geometric and combinatorial questions about Brillouin zones, with an eye on applications to sets with some notion of order, such as lattices, sets with aperiodic structure, and hyperuniform sets.
For background on lattices, we refer to the books by Engel, Michel, Senechal \cite{EMS04} and Zhilinsky \cite{Zhi15} but also to the paper by Skriganov \cite{Skr87}, which focuses on the connection to the geometry of numbers.
For an extensive introduction to aperiodic order see the book by Baake and Grimm \cite{BG13}.
For background on hyperuniform sets see the article by Torquato \cite{Tor18}.
Among our results are
\smallskip \begin{itemize}
  \item bounds on the distance of the $k$-th Brillouin zone of $0$ from $0$;
  \item the stability of the $k$-th Brillouin zone under perturbations of the points;
  \item bounds on the number of chambers in the $k$-th Brillouin zone;
  \item bounds on the maximum diameter of a chamber in the $k$-th Brillouin zone.
\end{itemize} \smallskip
We focus on the integer lattice in $\Rspace^d$ and on its perturbations.
Some of our results hold more generally---such as the stability, which holds for Delone sets---while others are specific---such as the $O(k)$ bound on the number of chambers in the $k$-th Brillouin zone, which we can only prove for lattices in $\Rspace^2$.
We provide experimental data for sets in the plane and use it to formulate concrete questions aimed at deepening the study started in this paper. The corresponding {\tt Python} code is available at \cite{Gha23}.

\smallskip \noindent
\emph{Outline.}
Section~\ref{sec:2} provides the necessary geometric  background.
Sections~\ref{sec:3} and \ref{sec:4} study the width, the distance from the generating point, and the stability of the Brillouin zones.
Section~\ref{sec:5} counts the chambers in the Brillouin zones.
Section~\ref{sec:6} proves bounds on the size of the largest chamber in a Brillouin zone.
Section~\ref{sec:7} concludes the paper.

%%%%%%%%%%%%%%%%%%%%%%%%%%%%%%%%%%%%%%%%%%%%%%%
%%%%%%%%%%%%%%%%%%%%%%%%%%%%%%%%%%%%%%%%%%%%%%%
\section{Geometric Background}
\label{sec:2}
%%%%%%%%%%%%%%%%%%%%%%%%%%%%%%%%%%%%%%%%%%%%%%%
%%%%%%%%%%%%%%%%%%%%%%%%%%%%%%%%%%%%%%%%%%%%%%%

In this section, we introduce the necessary background on Brillouin zones, the related bisector arrangements, and Voronoi tessellations.

%%%%%%%%%%%%%%%%%%%%%%%%%%%%%%%%%%%%%%%%%%%%%%%
\subsection{Types of Sets}
\label{sec:2.1}
%%%%%%%%%%%%%%%%%%%%%%%%%%%%%%%%%%%%%%%%%%%%%%%

The results in this paper apply to a small number of different types of point sets in Euclidean space.
The primary concern is that their Voronoi tessellations are well defined and that the dual Delaunay mosaic covers the entire space.

\smallskip
A set $A \subseteq \Rspace^d$ is \emph{Delone}\footnote{\emph{Delaunay mosaics} and \emph{Delone sets} are both named after Boris Delone (Delaunay), a Russian and Soviet mathematician of French descent.  He used the French spelling Delaunay in earlier works and the transliteration of Russian spelling Delone in later works.} if there are constants $0 < r < R < \infty$ such that every open ball of radius $r$ contains at most one point of $A$, and every closed ball of radius $R$ contains at least one point of $A$.
The supremum $r$ is the \emph{packing radius} and the infimum $R$ is the \emph{covering radius} of $A$.
The existence of $r > 0$ implies that every closed ball contains only a finite number of points in $A$, so the Voronoi tessellation is well defined.
Such a set is called \emph{locally finite}, but note that a locally finite set does not necessarily have a positive packing radius.
The existence of $R < \infty$ implies that every half-space contains infinitely many points of $A$, so the Delaunay mosaic covers $\Rspace^d$.
We therefore call such a set \emph{coarsely dense}, but note that a coarsely dense set does not necessarily have a finite covering radius.

\smallskip
Assuming $d$ linearly independent vectors, $v_1, v_2, \ldots, v_d \in \Rspace^d$, the set of integer combinations, $\Lambda = \{ \sum\nolimits_{i=1}^d j_i v_i \mid j_i \in \Zspace\}$, is a (full rank) \emph{lattice}.
It necessarily contains the origin, denoted $0 \in \Rspace^d$.
If the $v_i$ are the vectors in the standard basis of $\Rspace^d$, we call $\Lambda = \Zspace^d$ the \emph{integer lattice} in $\Rspace^d$.
The $v_i$ span a parallelepiped, and the (absolute) determinant of the $v_i$ is the $d$-dimensional volume of this parallelepiped, which is determined by the lattice.

\smallskip
A \emph{periodic set} is the sum of a lattice and a finite set: $A = \Lambda + M$, in which $M$ is called the \emph{motif}.
Note that every periodic set, and therefore every lattice is Delone.

%%%%%%%%%%%%%%%%%%%%%%%%%%%%%%%%%%%%%%%%%%%%%%%
\subsection{Brillouin Zones}
\label{sec:2.2}
%%%%%%%%%%%%%%%%%%%%%%%%%%%%%%%%%%%%%%%%%%%%%%%

In some areas of mathematics, ``Brillouin zone'' is a synonym for ``Voronoi domain''.
We define them so the zones depend on a positive integer parameter.
\begin{definition}[Brillouin Zones]
  \label{dfn:Brillouin_zones}
  Let $A \subseteq \Rspace^d$ be a locally finite point set with a distinguished point, $0 \in A$, and let $k \geq 1$ be an integer.
  The \emph{$k$-th Brillouin zone} of $0$ is the set $\BZone{k}{0}{A}$ of points $x \in \Rspace^d$ such that
  \begin{align}
    \Edist{x}{a} &< \Edist{x}{0} \mbox{\rm ~for at most~} k-1 \mbox{\rm ~points~} a \in A \setminus \{0\} ,
    \label{eqn:SC1} \\
    \Edist{x}{b} &\leq \Edist{x}{0} \mbox{\rm ~for at least~} k-1 \mbox{\rm ~points~} b \in A \setminus \{0\}.
    \label{eqn:SC2}
  \end{align}
  %% We call $k$ the \emph{order} of $\BZone{k}{0}{A}$.
\end{definition}
In the same way we can define $\BZone{k}{a}{A}$ for every point $a \in A$.
Note that $\BZone{1}{0}{A}$ is the Voronoi domain of $0$:  all points $x \in \Rspace^d$ for which no point in $A$ is closer to $x$ than $0$.
To show that Brillouin zones are closed, observe the following. When fixing a set of at least $k-1$ points for \eqref{eqn:SC1} and its subset of at most $k-1$ points for \eqref{eqn:SC2}, the sets defined by \eqref{eqn:SC1} and \eqref{eqn:SC2} are closed and so is their intersection.
%Maybe remove this sentence: Indeed, rephrasing \eqref{eqn:SC1} as a non-strict inequality for at least all but the $k-1$ points shows that the set defined by \eqref{eqn:SC1} is closed as intersection of closed sets.
The $k$th Brillouin zone of $0$ is the (possibly infinite) union of these closed sets over all suitable choices of sets of at least $k-1$ points and the corresponding subsets. However, its intersection with every compact neighborhood is determined by a finite number of points of $A$ and therefore it is closed as the finite union of closed sets.
Because this holds for any compact neighborhood the $k$th Brillouin zone is closed.
%Old version: Each Brillouin zone is closed because it is the intersection of the two closed sets defined by \eqref{eqn:SC1} and \eqref{eqn:SC2}. Indeed, rephrasing \eqref{eqn:SC1} as a non-strict inequality for at least all but $k-1$ points shows that the set defined by \eqref{eqn:SC1} is closed as intersection of closed sets.
Denoting the open ball with center $x$ and radius $\norm{x} = \Edist{x}{0}$ by $B(x, \norm{x})$, note that \eqref{eqn:SC1} and \eqref{eqn:SC2} %can be phrased as there being 
state there are 
at least $k-1$ points of $A\setminus \{0 \}$ in $B(x, \norm{x})$ and at most $k-1$ in the closure of $B(x, \norm{x})$.
To construct the Brillouin zones, we draw the bisectors defined by $0$ and all points $a \in A \setminus \{0\}$; see Figure~\ref{fig:zones}.
\begin{figure}[hbt]
  \centering
    \vspace{-0.0in}
    \resizebox{!}{1.94in}{\input{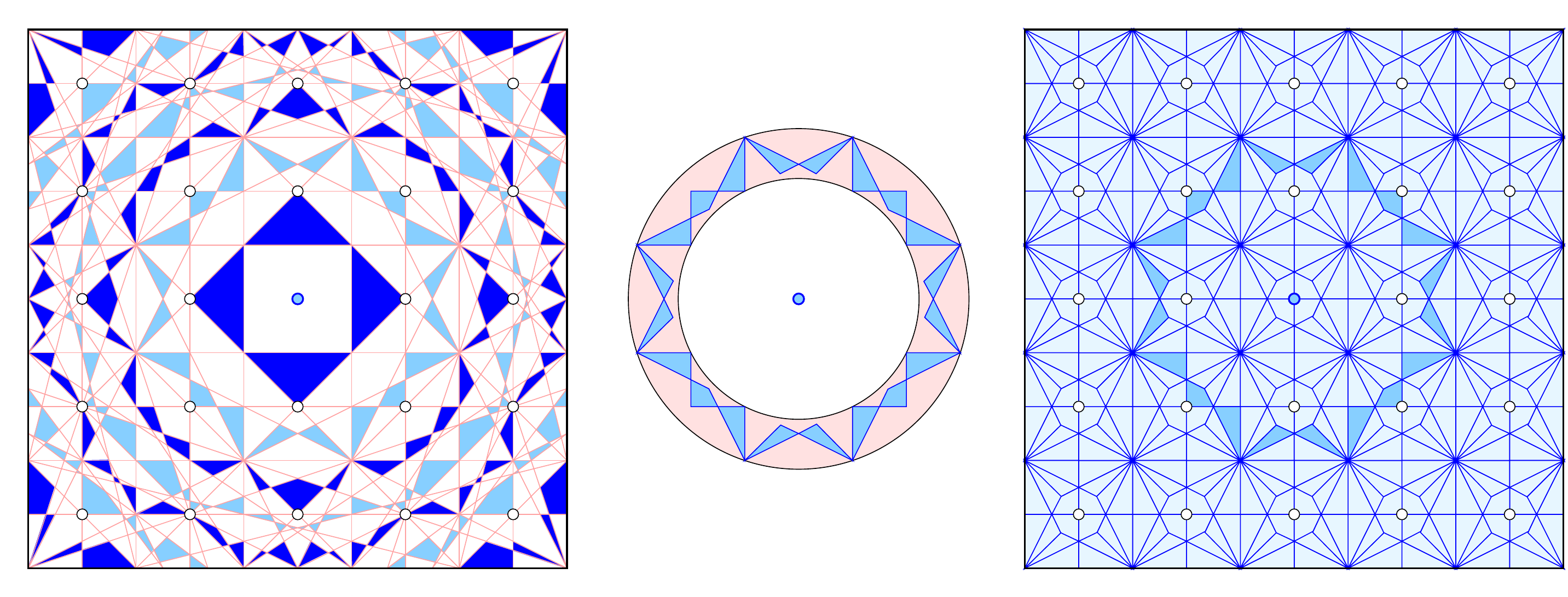_t}} %FYI: Teresa changed this from 2.0in to 1.94in, so that it doesn't create an overful hbox.
    \vspace{-0.1in}
    \caption{\emph{Left:} the arrangement of bisectors defined by the point in the center and all other points in the integer lattice.
    Starting with the second, every fourth Brillouin zone is colored \emph{dark blue} alternating with \emph{light blue}.
    \emph{Middle:} the $6$-th Brillouin zone sandwiched between two circles centered at the point in the center.
    \emph{Right:} the order-$k$ Brillouin tessellation of the integer lattice obtained by overlaying the order-$5$ with the order-$6$ Voronoi tessellations or, equivalently, by drawing the $6$-th Brillouin zones of all points in the integer lattice.}
  \label{fig:zones}
\end{figure}
Observe that $a\in B(x, \norm{x})$ iff the bisector of $0$ and $a$ separates $x$ from $0$, and $b\in \partial B(x, \norm{x})$ iff the bisector of $0$ and $b$ passes through $x$. Hence, according to Definition~\ref{dfn:Brillouin_zones}, $x \in \BZone{k}{0}{A}$ iff at most $k-1$ bisectors separate $x$ and $0$ and at least $k-1$ bisectors pass through $x$ or separate $x$ and $0$.
We can therefore label each $d$-dimensional cell in the arrangement by the number of bisectors that separate points in its interior from $0$, and get $\BZone{k}{0}{A}$ as the union of all (closed) cells labeled $k-1$.
For $k \geq 2$, every Brillouin zone is the difference between the (closed) star-convex set defined by \eqref{eqn:SC1} and the (open) star-convex complement of the set defined by \eqref{eqn:SC2}, in which the interior of the former contains the latter set.
It is therefore not difficult to prove the following fundamental result on the shape of the zones:
\begin{lemma}[Thickened Sphere]
  \label{lem:thickened_sphere}
  Let $A \subseteq \Rspace^d$ be locally finite and coarsely dense, and assume $0 \in A$.
  Then for every finite integer $k \geq 2$, the $k$-th Brillouin zone of $0$ has the homotopy type of $\Sspace^{d-1}$.
\end{lemma}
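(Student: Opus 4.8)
The plan is to exploit the decomposition noted just before the statement: for $k \geq 2$ we have $\BZone{k}{0}{A} = U \setminus V$, where $U$ is the closed star-convex set cut out by \eqref{eqn:SC1} and $V$ is the open star-convex complement of the set cut out by \eqref{eqn:SC2}, both star-convex with respect to $0$ and satisfying $V \subseteq \interior{U}$. Since for $k \geq 2$ no point of $A \setminus \{0\}$ lies at distance $\leq 0$ from $0$, the origin fails \eqref{eqn:SC2} and hence lies in its complement $V$; as $V$ is open it contains a ball $B(0,\ee)$, so the zone avoids a neighborhood of $0$. First I would record that $U$ is bounded: coarse density guarantees that once $\norm{x}$ is large enough the open ball $B(x,\norm{x})$ contains at least $k$ points of $A$, all necessarily distinct from $0$, so $x$ violates \eqref{eqn:SC1}. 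Hence $U$ is compact with $0 \in \interior{U}$, and because it is a locally finite union of closed full-dimensional cells of the bisector arrangement it equals the closure of its interior.

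Next I would pass to radial coordinates about $0$. For a unit vector $u \in \Sspace^{d-1}$, star-convexity forces $U$ to meet the ray $\{tu : t \geq 0\}$ in a segment $[0,f(u)]\,u$ and $V$ to meet it in a segment $[0,g(u))\,u$, with $0 < \ee \leq g(u) \leq f(u) < \infty$; here the inequality $g \leq f$ is just the inclusion $V \subseteq U$, and the uniform lower bound $\ee$ comes from $B(0,\ee) \subseteq V$. Consequently $\BZone{k}{0}{A}$ meets each ray in the nonempty closed interval $[g(u),f(u)]\,u$, which never reaches $0$. The zone may pinch to a single point where $g(u)=f(u)$, so it is not globally a product $\Sspace^{d-1} \times [0,1]$; the strategy is instead to retract it onto its outer boundary, where no degeneracy occurs.

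The technical heart is the continuity of the outer radial function $f$. Upper semicontinuity is immediate from the closedness of $U$. For the reverse I would use $U = \overline{\interior{U}}$ together with star-convexity to show $[0,f(u))\,u \subseteq \interior{U}$: taking a small ball contained in $U$ and centered at an interior point near $f(u)\,u$ and scaling it toward $0$ yields, by star-convexity of $U$, an open ball inside $U$ around any prescribed point $tu$ with $t < f(u)$. Lower semicontinuity of $f$ then follows, so $f$ is continuous, and the same computation identifies the topological boundary $\partial U$ with the outer sphere $\{f(u)\,u : u \in \Sspace^{d-1}\}$, which the radial projection $x \mapsto x/\norm{x}$ maps homeomorphically onto $\Sspace^{d-1}$.

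Finally I would exhibit the strong deformation retraction of the zone onto $\partial U$ that slides each point radially outward, $H(x,\tau) = \bigl( (1-\tau)\norm{x} + \tau f(x/\norm{x}) \bigr)\, x/\norm{x}$. This is well defined and continuous because $\norm{x} > 0$ on the zone and $f$ is continuous; it keeps each point on its own ray within $[g(u),f(u)]\,u \subseteq \BZone{k}{0}{A}$, and it fixes $\partial U$ pointwise. Composing with the homeomorphism $\partial U \cong \Sspace^{d-1}$ gives $\BZone{k}{0}{A} \simeq \Sspace^{d-1}$. I expect the continuity of $f$ (equivalently, that $\partial U$ is a genuine sphere and not merely the graph of a discontinuous radial function) to be the only delicate step; the possible radial pinching of the zone is harmless precisely because the retraction targets the outer boundary.
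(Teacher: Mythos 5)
Your overall strategy---describe the zone radially as $[g(u),f(u)]\,u$ over $u \in \Sspace^{d-1}$ and deformation-retract it radially onto the graph of $f$---is essentially the paper's own proof run in the opposite direction (the paper retracts inward onto the boundary of the open star-convex set defined by the negation of \eqref{eqn:SC2}, you retract outward onto $\partial U$), so the architecture is sound. The gap is in the one step you yourself single out as delicate: lower semicontinuity of $f$. You derive it from the claim that $U = \overline{\interior{U}}$ together with star-convexity forces $[0,f(u))\,u \subseteq \interior{U}$, by scaling a small ball centered at an interior point $y$ near $f(u)\,u$ toward the origin. But scaling produces balls centered on the segment $(0,y]$, of radius proportional to the scaling factor; such a ball contains the prescribed point $tu$ only if $\Edist{y}{f(u)u}$ is smaller than the inradius of $U$ at $y$, and regular closedness gives no control of that kind. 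Indeed the general principle you invoke is false: in polar coordinates in $\Rspace^2$, let $U = \overline{B}(0,1) \cup \{(r,\theta) : 1 \leq r \leq 2,\ 0 \leq \theta \leq 2-r\}$. This set is compact, star-convex with respect to $0$, has $0$ in its interior, and equals the closure of its interior, yet for $u = (1,0)$ every point $(t,0)$ with $1 < t \leq 2$ lies on $\partial U$, so $[0,f(u))\,u \not\subseteq \interior{U}$, and $f$ jumps from $1$ to $2$ at that direction. So the properties you actually use (closed, bounded, star-convex, regular closed) do not yield continuity of $f$, and without it your map $H$ need not be continuous.

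What rescues the argument is structure of $U$ that you mention but do not exploit: $U$ is a union of closed chambers of the bisector arrangement, so $\partial U$ is contained in the union of the bisectors of $0$ and the points $a \in A \setminus \{0\}$. Every bisector avoids $0$, hence meets the ray $\{tu : t > 0\}$ in at most one point (a hyperplane containing two points of the ray would contain the whole line through $0$), and by local finiteness of $A$ only finitely many bisectors meet the bounded segment $[0,f(u)]\,u$. Therefore $\partial U$ meets that segment in a finite set, so there are points of $\interior{U}$ \emph{on the ray} arbitrarily close to $f(u)\,u$; for those your scaling argument works verbatim and gives $[0,f(u))\,u \subseteq \interior{U}$, hence lower semicontinuity. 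Alternatively, you can bypass the topology entirely: $f(u)$ is the $k$-th smallest crossing parameter $\tfrac{1}{2}\norm{a}^2/\scalprod{u}{a}$ over $a$ with $\scalprod{u}{a}>0$, and near any fixed $u$ only finitely many points of $A$ can realize values below a given bound, so $f$ is locally the $\min_k$ of finitely many continuous functions; this is exactly the mechanism of Lemmas~\ref{lem:stability_of_rank} and~\ref{lem:stability_of_crossing} in the paper. With either repair, the rest of your proof (boundedness of $U$ from coarse density, the harmlessness of radial pinching, the homeomorphism $\partial U \cong \Sspace^{d-1}$, and the retraction $H$) goes through.
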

\begin{proof}
  Every ray $L$ emanating from the origin $0$ has non-empty intersection with the $k$-th Brillouin zone of $0$.
  To show this, consider a point $x$ moving along the ray $L$ together with the (moving) ball $B(x, \norm{x})$ centered at $x$ passing through the origin.
  Since $A$ is coarsely dense, the half space orthogonal to $L$ (and passing through $0$) contains infinitely many points of $A$. Thus, there is a point $x$ on the ray $L$ such that the open ball $B(x, \norm{x})$ contains at least $k$ points in $A\setminus\{0\}$, and thus condition \eqref{eqn:SC1} is not fulfilled, so $x$ is not in the Brillouin zone. 
  However, when moving $x$ on $L$ towards the origin, the first time that the open ball $B(x, \norm{x})$ contains at most $k-1$ points in $A\setminus\{0\}$, the closure of that ball contains at least $k$ points in $A\setminus\{0\}$. Hence, we found a point $x_L$ satisfying both \eqref{eqn:SC1} and \eqref{eqn:SC2}.

  Next, we use star-convexity to construct a deformation retraction: Every point $y$ in the $k$-th Brillouin zone of $0$ gets moved by a straight line to $cy$ with $c= \sup \{c^\prime > 0 \ | \ c^\prime y \in S\}$, where $S$ is the open star-convex set defined by the negation of \eqref{eqn:SC2}.
  The image under this retraction of the points $x_L$ for all the rays $L$, and thus the image of the whole Brillouin zone, is homeomorphic to a sphere.
\end{proof}

%%%%%%%%%%%%%%%%%%%%%%%%%%%%%%%%%%%%%%%%%%%%%%%
\subsection{Bisector Arrangements}
\label{sec:2.3}
%%%%%%%%%%%%%%%%%%%%%%%%%%%%%%%%%%%%%%%%%%%%%%%

The bisectors defined by $0$ and the other points of $A$ form an arrangement of $(d-1)$-planes in $\Rspace^d$; see the left panel in Figure~\ref{fig:zones}.
%Formally,
When restricting ourselves to coarsely dense\footnote{For infinite point sets that are not coarsely dense, the cells in the decomposition might not be polyhedra but are generalized convex polyhedra \cite{Gru07}, because a cell can have infinitely many faces, see \cite[Abbildung 3.4]{Voi08}} locally finite point sets, this is a decomposition of $\Rspace^d$ into convex polyhedra of dimension $p$ from $0$ to $d$.
We refer to the $p$-dimensional polyhedra as \emph{$p$-cells} and to the $d$-cells as \emph{chambers}.
Note that the (relative) interiors of the cells partition $\Rspace^d$, while the cells may intersect in shared boundary pieces, which are again cells in the arrangement.

\smallskip
We say $A \subseteq \Rspace^d$ is in \emph{general position} if no $d+1$ of its points lie on a common $(d-1)$-plane and no $d+2$ of its points lie on a common $(d-1)$-sphere.
If $A$ is in general position, the arrangement of bisectors is \emph{simple}; that is: any $d$ of the $(d-1)$-planes meet in a common point and no $d+1$ of them do.
Under this assumption, the number of cells in the arrangement is a function of the number of bisectors; see for example \cite[Theorem 1.3]{Ede87}.
\begin{proposition}[Plane Arrangements]
  \label{prop:plane_arrangements}
  Let $\cal A$ be a simple arrangement of $n$ $(d-1)$-planes in $\Rspace^d$.
  Then for each $0 \leq p \leq d$, the number of $p$-cells in $\cal A$ is $\sum\nolimits_{i=0}^p \tbinom{d-i}{p-i} \tbinom{n}{d-i}$.
\end{proposition}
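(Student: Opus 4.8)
The plan is to count the $p$-cells by organizing them according to the $p$-flat that supports each one, thereby reducing the $d$-dimensional count to a family of top-dimensional counts in lower dimension. First I would observe that, because $\cal A$ is simple, every $p$-cell spans a unique $p$-flat, and this flat is the intersection of exactly $d-p$ of the $n$ planes; conversely, every choice of $d-p$ planes meets in a distinct $p$-flat. This sets up a partition of the $p$-cells over the $\binom{n}{d-p}$ flats of this form.

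Next I would fix one such flat $F$, cut out by planes $h_1,\dots,h_{d-p}$, and examine the arrangement traced inside $F \cong \Rspace^{p}$ by the remaining $m=n-(d-p)$ planes. General position guarantees that this induced arrangement is again simple and that its top-dimensional cells are in bijection with the $p$-cells of $\cal A$ lying in $F$: a point in the relative interior of such a top cell lies on $h_1,\dots,h_{d-p}$ and on no other plane, hence on exactly $d-p$ planes, which is precisely the condition for lying in the relative interior of a $p$-cell of $\cal A$. The number of these top cells is governed by the classical region-counting formula, namely that a simple arrangement of $m$ hyperplanes in $\Rspace^{p}$ has $\sum_{j=0}^{p}\binom{m}{j}$ chambers.

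Summing over all flats gives $\binom{n}{d-p}\sum_{j=0}^{p}\binom{n-d+p}{j}$ cells of dimension $p$, and it remains to match this with the stated expression. Writing $q=d-p$ and applying the subset-of-a-subset identity $\binom{n}{q+j}\binom{q+j}{q}=\binom{n}{q}\binom{n-q}{j}$ termwise turns the product into $\sum_{j=0}^{p}\binom{q+j}{q}\binom{n}{q+j}$, which is exactly $\sum_{i=0}^{p}\binom{d-i}{p-i}\binom{n}{d-i}$ after reindexing by $i=p-j$, using $\binom{d-i}{d-p}=\binom{d-i}{p-i}$. This final step is routine bookkeeping.

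The main obstacle is the region-counting formula itself: for $p=d$ it coincides with the statement we are proving, so it cannot simply be quoted without circularity. I would establish it separately by induction on $m$, noting that adding one hyperplane to a simple arrangement of $m-1$ hyperplanes in $\Rspace^{p}$ creates exactly as many new chambers as the number of pieces into which that hyperplane is subdivided by the induced $(p-1)$-dimensional arrangement, which yields the recurrence that the binomial sum satisfies via Pascal's rule. A cleaner but less geometric alternative bypasses the flat decomposition entirely: derive the three-term recurrence $C_p(n+1,d)=C_p(n,d)+C_p(n,d-1)+C_{p-1}(n,d-1)$ by adding one plane and reading off both the new $p$-cells lying on it and the old $p$-cells it splits, then verify that the claimed formula satisfies this recurrence together with the base case $n=0$ by a single application of Pascal's identity. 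In either route the delicate points are the same, and both hinge on general position: checking that the induced arrangement remains simple and that the face correspondence is a genuine bijection.
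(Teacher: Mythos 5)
Your proof is correct, but note that the paper itself does not prove this proposition at all: it is stated as a known result and cited to \cite[Theorem 1.3]{Ede87}, so there is no in-paper proof to match. Your main route---partitioning the $p$-cells over the $\tbinom{n}{d-p}$ supporting $p$-flats, counting chambers of the induced simple arrangement of $n-(d-p)$ hyperplanes in each flat, and then collapsing $\tbinom{n}{d-p}\sum_{j=0}^{p}\tbinom{n-d+p}{j}$ into $\sum_{i=0}^{p}\tbinom{d-i}{p-i}\tbinom{n}{d-i}$ via the subset-of-a-subset identity---is sound, and you correctly flag and repair the one real pitfall, namely that the chamber-count formula is the $p=d$ case of the statement and so must be proved independently (your insertion-induction does this). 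All the general-position facts you invoke do hold in a simple arrangement: a $p$-cell lies on exactly $d-p$ planes, distinct $(d-p)$-subsets give distinct $p$-flats, and the induced arrangement in a flat is again simple. The alternative you sketch, the recurrence $C_p(n+1,d)=C_p(n,d)+C_p(n,d-1)+C_{p-1}(n,d-1)$ obtained by inserting one plane and accounting for split cells and cells on the new plane, is essentially the classical proof in the cited reference; it treats all dimensions $p$ uniformly in one induction, whereas your flat decomposition isolates the combinatorics in a single chamber count and makes the product structure $\tbinom{n}{d-p}\cdot(\text{chambers in }\Rspace^{p})$ of the answer transparent before the reindexing. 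Either version would serve as a complete proof.
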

Assuming $p$ and $d$ are constants, this implies that the arrangement has $\Theta (n^d)$ cells of any dimension.

%%%%%%%%%%%%%%%%%%%%%%%%%%%%%%%%%%%%%%%%%%%%%%%
\subsection{Voronoi Tessellations}
\label{sec:2.4}
%%%%%%%%%%%%%%%%%%%%%%%%%%%%%%%%%%%%%%%%%%%%%%%

For fixed $k \geq 1$, the $k$-th Brillouin zones of the points in $A$ form the \emph{order-$k$ Brillouin tessellation}, which is also known as the \emph{degree-$k$ Voronoi tessellation} \cite{EdSe86}.
It relates to the better known order-$k$ Voronoi tessellation, see e.g.\ \cite{Lee82}, which we introduce first.

\smallskip
Given a finite set $A \subseteq \Rspace^d$ and a non-empty subset $Q \subseteq A$, we define the \emph{region} of $Q$ as the points $x \in \Rspace^d$ that satisfy $\Edist{x}{q} \leq \Edist{x}{a}$ for all $q \in Q$ and $a \in A \setminus Q$.
If non-empty, this region is a convex polyhedron, and if $A$ is in general position, then this polyhedron is $d$-dimensional.
The \emph{order-$k$ Voronoi tessellation} is the polyhedral complex consisting of the regions of subsets $Q \subseteq A$ of size $\card{Q} = k$ and the faces shared by these polyhedra.

\smallskip
For finite sets $A \subseteq \Rspace^2$, the number of cells in the order-$k$ Voronoi tessellation is well understood.
Part of the reason is that in the $2$-dimensional generic case, every vertex is shared by exactly two tessellations of consecutive order.
We thus distinguish between the \emph{old} and \emph{new} vertices of the order-$k$ Voronoi tessellation, which it shares with the order-$(k-1)$ and order-$(k+1)$ Voronoi tessellations, respectively.
Using induction, Lee proved that there are fewer than $[4k-2]n$ vertices, $[6k-3]n$ edges, and $[2k-1]n$ regions \cite{Lee82}.
We will need more precise estimates, so we follow \cite{BCES21} and view the order-$k$ Voronoi tessellation in $\Rspace^2$ as the projection of cells in an arrangement of planes in $\Rspace^3$, and then overcount by moving to the $3$-sphere.
The latter amounts to mapping the planes to $2$-dimensional great-spheres in $\Sspace^3$, which effectively combines the order-$k$ with the order-$(n-k)$ Voronoi tessellation.
We call the result of this view the \emph{spherical order-$k$ Voronoi tessellation.}
The benefit of this approach is that we get equalities for the number of cells, rather than inequalities.
Indeed, applying the $3$-dimensional methods in \cite{BCES21} to $2$ dimensions, things simplify considerably and it is not difficult to count the faces assuming general position:
\begin{proposition}[Spherical Order-$k$ Voronoi Tessellation]
  \label{prop:order-k_Voronoi_tessellation}
  Let $A$ be $n \geq 4$ points in general position in $\Rspace^2$.
  Then for $1 \leq k \leq n-1$, the spherical order-$k$ Voronoi tessellation of $A$ has $u_k = 2(k-1)(n-k)$ old vertices, $w_k = 2k(n-k) - 2k$ new vertices, $e_k = (6k-3)(n-k) - 3k$ edges, and $r_k = (2k-1)(n-k) - (k-2)$ regions.
\end{proposition}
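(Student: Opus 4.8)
The plan is to follow the lifting approach signposted above and to reduce the four face counts to two vertex counts. First I would lift each site $a=(a_1,a_2)\in A$ to the plane $z = 2a_1 x + 2a_2 y - (a_1^2+a_2^2)$ in $\Rspace^3$, chosen so that the plane lying highest above a point $q$ is the one of the site nearest to $q$. Then the order-$k$ Voronoi tessellation is the vertical projection of the terrain formed by the $k$-th highest planes, and the general-position hypothesis guarantees that the arrangement of these $n$ planes is simple. Compactifying $\Rspace^3$ to $\Sspace^3$ by central projection turns the planes into $n$ great $2$-spheres forming a simple, antipodally symmetric arrangement; the $k$-th level becomes a polyhedral $2$-sphere embedded in $\Sspace^3$ whose cell decomposition is exactly the spherical order-$k$ Voronoi tessellation. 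This is the one-dimension-lower analogue of the construction in \cite{BCES21}, and it is the passage to $\Sspace^3$ that closes up the unbounded cells responsible for the inequalities in \cite{Lee82}.

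Next I would extract two structural identities that collapse the problem. Since the tessellation is a cell decomposition of a topological $\Sspace^2$, Euler's relation gives $u_k + w_k - e_k + r_k = 2$. Since the arrangement is simple, each vertex lies on exactly three of the great spheres and hence has degree three in the tessellation, so $2e_k = 3(u_k+w_k)$. These two identities determine $e_k$ and $r_k$ from the total vertex number $u_k+w_k$ alone, and one checks they are consistent with the claimed formulas (indeed $V-E+F$ collapses to $2$ and $2E-3V$ to $0$). It therefore suffices to count the old and the new vertices separately.

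Finally, I would count vertices by their level. A vertex of the tessellation is a point equidistant from three sites; it is an old vertex when exactly $k-2$ further sites are closer and a new vertex when exactly $k-1$ are, which is what makes it shared with the order-$(k-1)$ and the order-$(k+1)$ tessellation, respectively. On $\Sspace^3$ the antipodal map sends a vertex with $m$ closer sites to one with $(n-3)-m$ closer sites, so it interchanges the level data of order $k$ and order $n-k$; this symmetry, together with the simple great-circle subarrangements cut out on each pairwise-intersection $2$-sphere, is what I would use to convert the per-level tallies into exact counts by double counting vertex--arc incidences, mirroring the $4$-dimensional computation of \cite{BCES21}. The main obstacle lies precisely here: in the plane the number of order-$k$ vertices is configuration dependent (it varies with the convex-layer and $k$-set structure, which is why \cite{Lee82} obtains only inequalities), so the crux is to show that the spherical symmetrization cancels this dependence and leaves polynomials in $n$ and $k$, all while keeping the old/new bookkeeping consistent across the antipodal identification.
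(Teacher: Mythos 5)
Your framework is set up correctly, and it matches the route the paper itself gestures at (the paper states this proposition \emph{without} proof, deferring to the methods of \cite{BCES21}): the lifting $z = 2a_1x + 2a_2y - (a_1^2+a_2^2)$ is the right one, the compactification to $\Sspace^3$ does close the $k$-level into a polyhedral $2$-sphere, and your two structural identities are valid---in general position every vertex of the tessellation is incident to exactly three edges, so $2e_k = 3(u_k+w_k)$, and Euler's relation gives $u_k + w_k - e_k + r_k = 2$. Together these correctly reduce all four formulas to the single count $u_k + w_k$, via $e_k = \tfrac{3}{2}(u_k+w_k)$ and $r_k = 2 + \tfrac{1}{2}(u_k+w_k)$.

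The genuine gap is that the vertex count is never carried out: your last paragraph names the crux (``the spherical symmetrization cancels this dependence'') but does not resolve it, and that cancellation \emph{is} the content of the proposition---everything before it is routine. Concretely, let $c_m$ denote the number of triples of sites whose circumcircle strictly contains exactly $m$ other sites. Each triple yields two antipodal vertices on $\Sspace^3$, and by the depth-reversal you observe, the new vertices of the spherical order-$k$ tessellation number $w_k = c_{k-1} + c_{n-k-2}$ and the old ones $u_k = c_{k-2} + c_{n-k-1} = w_{k-1}$. So the proposition is equivalent to the identity $c_{k-1} + c_{n-k-2} = 2k(n-k-1)$, which is a real theorem rather than bookkeeping: each $c_m$ alone is configuration dependent (e.g.\ $c_0$ is the number of Delaunay triangles, $2n-2-h$ with $h$ the hull size, while $c_{n-3}$ is the number of farthest-point Delaunay triangles, $h-2$; only their sum $2n-4$ is universal). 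Two ways to close the gap: (i) specialize the sweep/Morse-theoretic count of \cite{BCES21} to one lower dimension, tracking how the level surfaces change as a great sphere sweeps $\Sspace^3$; or (ii) avoid depth counting altogether by gluing exact planar counts: the order-$k$ diagram has exactly $(2k-1)n - k^2 + 1 - \sum_{i=1}^{k-1} a_i$ regions, where $a_i$ is the number of $i$-sets \cite{Lee82}, the spherical tessellation merges each unbounded order-$k$ region with its partner unbounded order-$(n-k)$ region, and the classical facts $a_i = a_{n-i}$ and $\sum_{i=1}^{n-1} a_i = n(n-1)$ make all configuration-dependent terms cancel, giving $r_k = 2k(n-k) - n + 2 = (2k-1)(n-k)-(k-2)$; your two identities then return $u_k + w_k$ and $e_k$, and $u_k$, $w_k$ are separated by $u_k = w_{k-1}$ and induction on $k$.
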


In dimensions three and higher, counting the cells in the order-$k$ Voronoi tessellations is significantly more difficult \cite{BCES21} and only rough upper and lower bounds are known.
The \mbox{order-$k$} Brillouin tessellation is the overlay of the order-$(k-1)$ and order-$k$ Voronoi tessellations; see Figure~\ref{fig:zones}, where the order-$5$ and order-$6$ Voronoi tessellations are overlaid to give the order-$6$ Brillouin tessellation.
It decomposes $\Rspace^d$ into convex regions such that any two points in the same region have the same $k$-th nearest point in $A$.
Each such region is a chamber of the $k$-th Brillouin zone of this $k$-th nearest point in $A$.
In the $2$-dimensional case, it is not difficult to get good bounds on the number of cells from Proposition~\ref{prop:order-k_Voronoi_tessellation}:
\begin{corollary}[Spherical Order-$k$ Brillouin Tessellation]
  \label{cor:order-k_Brillouin_tessellation}
  Let $A$ be $n \geq 4$ points in general position in $\Rspace^2$.
  Then for $1 \leq k \leq n-1$, the spherical order-$k$ Brillouin tessellation of $A$ has $(6k-6)(n-k)-4$ vertices, $(12k-12)(n-k)-6$ edges, and $(6k-6)(n-k)$ regions.
\end{corollary}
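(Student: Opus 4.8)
The plan is to count the cells of the spherical order-$k$ Brillouin tessellation by using that it is the overlay of the spherical order-$(k-1)$ and order-$k$ Voronoi tessellations, and then to read off the three quantities from Proposition~\ref{prop:order-k_Voronoi_tessellation} together with Euler's relation on the sphere. The first fact I would record is that the \emph{new} vertices of the order-$(k-1)$ tessellation are exactly the \emph{old} vertices of the order-$k$ tessellation: both are the points equidistant from three sites of $A$ with precisely $k-2$ sites strictly closer, and accordingly $w_{k-1} = 2(k-1)(n-k+1) - 2(k-1) = 2(k-1)(n-k) = u_k$. These common points turn out to be the only places where the two tessellations interact.

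The second step is a local classification of the vertices of the overlay. I would distinguish (a) the shared vertices counted by $u_k = w_{k-1}$; (b) the new vertices of order $k$, counted by $w_k$, which lie in the interior of an order-$(k-1)$ region; and (c) the old vertices of order $k-1$, counted by $u_{k-1}$, which lie in the interior of an order-$k$ region. To see that these exhaust the overlay vertices I must rule out a transversal crossing of an order-$(k-1)$ edge with an order-$k$ edge away from the shared vertices. Such a crossing would be a point lying on two distinct bisectors, at which one pair of sites occupies distance-ranks $\{k-1,k\}$ while another pair occupies ranks $\{k,k+1\}$; comparing the two radii shows this forces four sites onto a common circle, which general position forbids. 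Hence the vertex set of the overlay is the disjoint union of the three families, giving $V = u_{k-1} + u_k + w_k$.

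The third step computes the vertex degrees. At a vertex of type (b) or (c) only one of the two tessellations is present locally, so the vertex retains its degree $3$. At a shared vertex $v$, equidistant from sites $a,b,c$ with $k-2$ sites strictly closer, the three bisectors of $\{a,b\}$, $\{b,c\}$, $\{a,c\}$ pass through $v$; along each such bisector one ray carries an order-$(k-1)$ edge (on the side where the third site is farthest among the three) and the opposite ray carries an order-$k$ edge (on the side where the third site is among the $k$ nearest), so $v$ has degree $6$ in the overlay. The handshake identity then yields $2E = 6u_k + 3w_k + 3u_{k-1}$.

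Finally I would substitute $u_{k-1} = 2(k-2)(n-k+1)$, $u_k = 2(k-1)(n-k)$, and $w_k = 2k(n-k) - 2k$ to obtain $V = (6k-6)(n-k) - 4$ and $E = (12k-12)(n-k) - 6$, and then apply $V - E + R = 2$ on the $2$-sphere to conclude $R = (6k-6)(n-k)$. The arithmetic is routine; the real content, and the step I expect to demand the most care, is the local analysis of the overlay---specifically verifying the degree-$6$ structure at the shared vertices and, through the concyclicity argument, that no further crossing vertices are created.
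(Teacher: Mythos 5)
Your proof is correct and follows essentially the same route as the paper: overlay the spherical order-$(k-1)$ and order-$k$ Voronoi tessellations, identify the duplicated vertices via $w_{k-1} = u_k$, and finish with Euler's formula on the $2$-sphere. The only deviation is in the edge count, where the paper simply adds $e_{k-1} + e_k$ (implicitly relying on the same no-crossing facts you establish via the concyclicity argument), while you recover the same number from the degree-$3$/degree-$6$ vertex structure and the handshake lemma---a minor variation that supplies justification the paper leaves implicit.
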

\begin{proof}
  For the vertices, we add the numbers of the order-$(k-1)$ and order-$k$ Voronoi tessellations and remove duplicates: $u_{k-1} + u_k + w_k = (6k-6)(n-k) - 4$.
  For the edges, we add the numbers: $e_{k-1} + e_k = (12k-12)(n-k) - 6$.
  To count the regions, we use the Euler formula for the $2$-sphere, which implies $(e_{k-1}+e_k) - (u_{k-1}+u_k+w_k) + 2 = (6k-6)(n-k)$ regions, as claimed.
\end{proof}

If the points are not in general position, then the equations turn into upper bounds, which also hold if we abandon the spherical view and count in the Euclidean plane.

%%%%%%%%%%%%%%%%%%%%%%%%%%%%%%%%%%%%%%%%%%%%%%%
\subsection{Perturbations of the Integer Lattice}
\label{sec:2.5}
%%%%%%%%%%%%%%%%%%%%%%%%%%%%%%%%%%%%%%%%%%%%%%%

We call an injective map, \linebreak %FYI: Teresa added a linebreak here to avoid overful hbox.
$\varphi \colon \Zspace^d \to \Rspace^d$, a \emph{perturbation} of the integer lattice, and the supremum of the $\Edist{a}{\varphi (a)}$ over all $a \in \Zspace^d$ its \emph{magnitude}.
The perturbation is \emph{bounded} if its magnitude is finite.
We call the image of the map, $P = \varphi (\Zspace^d )$, a \emph{perturbed integer lattice}.
Without loss of generality, we assume throughout this paper that $0 \in \Zspace^d$ is a fixed point; that is: $\varphi (0) = 0$.
We generate perturbations randomly, by picking $\varphi (a)$ uniformly at random in $a + [-\tau, \tau]^d$ for each $a \in \Zspace^d \setminus \{0\}$, in which $\tau$ is the \emph{strength} of the perturbation.
Standardizing to three strengths, we call the generated perturbation \emph{weak}, \emph{medium}, \emph{strong} if $\tau = 0.02, 0.10, 0.50$, respectively.

\smallskip
While magnitude and strength are different concepts, we discuss them in disjoint contexts and thus use the same letter, $\tau$, to denote either.
The strength of a perturbation is relevant in many of our computational experiments, and in Appendix~\ref{app:A}, which supports the experiments by analyzing how many Brillouin zones in a finite arrangement of bisectors are reliable.

%%%%%%%%%%%%%%%%%%%%%%%%%%%%%%%%%%%%%%%%%%%%%%%
\subsection{Balls and Spheres}
\label{sec:2.6}
%%%%%%%%%%%%%%%%%%%%%%%%%%%%%%%%%%%%%%%%%%%%%%%

Consistent with the common notation in stochastic geometry, we write $\nu_d$ for the $d$-dimensional volume of the unit ball in $\Rspace^d$, and $\sigma_d$ for the $(d-1)$-dimensional volume of the unit sphere, which bounds this ball.
We have
\begin{align}
  \sigma_d  &=  \frac{2 \pi^{d/2}}{\Gamma(\sfrac{d}{2})}
                \renewcommand{\arraystretch}{1.7}
             =  \left\{ \begin{array}{cl}
                  \frac{[2 \pi]^{{d}/{2}}}{[d-2]!!}  &  \mbox{\rm for even~} d , \\
                  \frac{2 [2 \pi]^{{[d-1]}/{2}}}{[d-2]!!}  &  \mbox{for odd~} d ,
                \end{array} \right.
\end{align}
in which $[d-2]!!$ is the product of every other integer starting with $d-2$; see e.g.\ \cite[page 13]{ScWe08}, where $\sigma_d$ is denoted $\omega_d$.
Furthermore, the volume of the unit ball is $\nu_d = {\sigma_d}/{d}$, which is denoted $\kappa_d$ in \cite{ScWe08}.
Writing $B(0, \rho)$ for the ball with radius $\rho$ centered at $0 \in \Rspace^d$, we note that its $d$-dimensional volume is $\Volume{B(0,\rho)} = \nu_d \rho^d$.

%%%%%%%%%%%%%%%%%%%%%%%%%%%%%%%%%%%%%%%%%%%%%%%
%%%%%%%%%%%%%%%%%%%%%%%%%%%%%%%%%%%%%%%%%%%%%%%
\section{Distance and Width}
\label{sec:3}
%%%%%%%%%%%%%%%%%%%%%%%%%%%%%%%%%%%%%%%%%%%%%%%
%%%%%%%%%%%%%%%%%%%%%%%%%%%%%%%%%%%%%%%%%%%%%%%

Drawing the two circles centered at $0 \in A$ whose radii are the minimum and maximum distances of the $k$-th Brillouin zone from $0$, we get an annulus containing $\BZone{k}{0}{A}$; see the middle panel in Figure~\ref{fig:zones}.
We write $r_k(0) < R_k(0)$ for the two distances and call $W_k(0) = R_k(0) - r_k(0)$ the \emph{width} of the zone.
Since the distances and widths are the same for all points of a lattice, we simplify notation to $r_k, R_k, W_k$ whenever we talk about lattices.

%%%%%%%%%%%%%%%%%%%%%%%%%%%%%%%%%%%%%%%%%%%%%%%
\subsection{Integer Lattices} 
\label{sec:3.1}
%%%%%%%%%%%%%%%%%%%%%%%%%%%%%%%%%%%%%%%%%%%%%%%

We give upper and lower bounds on $r_k$ and $R_k$.
Some of these bounds were known to Jones \cite[Section 5]{Jones03}.
In particular, the lower bound on $r_k$ in \cite{Jones03} is the same as in the theorem below, while the upper bound on $r_k$ is slightly weaker.
Similarly, \cite{Jones03} contains a lower bound on the width of the $k$-th Brillouin zone that is weaker than the lower bound in the theorem below.
For completeness, we include a proof of the statement based on the fact that all Brillouin zones of a point in a lattice have the same volume.
\begin{theorem}[Width for Integer Lattices]
  \label{thm:width_for_integer_lattices}
  For every $k \geq 1$, the minimum and maximum distances and the width of the $k$-th Brillouin zone of any point in $\Zspace^d$ satisfy
  \begin{align}
    \sqrt[d]{\sfrac{k}{\nu_d}} - \sfrac{\sqrt{d}}{2}  &<  r_k  <  \sqrt[d]{\sfrac{(k-1)}{\nu_d}} , 
    \label{eqn:mindist-integer-lattice} \\
    \sqrt[d]{\sfrac{k}{\nu_d}}  &<  R_k  <  \sqrt[d]{\sfrac{k}{\nu_d}} + \sfrac{\sqrt{d}}{2} , 
    \label{eqn:maxdist-integer-lattice} \\
    \sqrt[d]{\sfrac{k}{\nu_d}} - \sqrt[d]{\sfrac{(k-1)}{\nu_d}} &<  W_k < \sqrt{d} .
      \label{eqn:width-integer-lattice}
  \end{align}
\end{theorem}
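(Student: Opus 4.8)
\emph{Plan.}
The strategy is to split the six inequalities into two ``inner'' distance bounds, $R_k > \sqrt[d]{\sfrac{k}{\nu_d}}$ and $r_k < \sqrt[d]{\sfrac{(k-1)}{\nu_d}}$, which I would obtain from a pure volume comparison using the cited fact that every Brillouin zone of a lattice point has the volume of the unit cell (here $1$); two ``outer'' distance bounds, $R_k < \sqrt[d]{\sfrac{k}{\nu_d}} + \sfrac{\sqrt{d}}{2}$ and $r_k > \sqrt[d]{\sfrac{k}{\nu_d}} - \sfrac{\sqrt{d}}{2}$, which I would obtain from a covering/packing argument using that the Voronoi cell of a point of $\Zspace^d$ is a unit cube whose circumradius equals the covering radius $\sfrac{\sqrt{d}}{2}$; and finally the width bounds \eqref{eqn:width-integer-lattice}, which follow by subtraction since $W_k = R_k - r_k$.

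\emph{Inner bounds.}
Writing $U_j = \bigcup_{i=1}^{j} \BZone{i}{0}{\Zspace^d}$ for the union of the first $j$ zones, I would first record the radial picture: along a ray $\{tu \mid t \geq 0\}$ the balls $B(tu,t)$ through the origin are nested and growing in $t$, so the number of lattice points they contain is non-decreasing. This shows that $U_j$ is star-shaped with respect to $0$ and that its outer boundary in direction $u$ coincides with the inner boundary of $\BZone{j+1}{0}{\Zspace^d}$, whence $B(0,r_k) \subseteq U_{k-1}$ and $U_k \subseteq \overline{B(0,R_k)}$. Since the zones have pairwise disjoint interiors and each has volume $1$, we get $\Volume{U_j} = j$, and comparing volumes yields $\nu_d r_k^d \leq k-1$ and $k \leq \nu_d R_k^d$. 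Both inequalities are strict because $U_j$ is a polyhedral region and so cannot fill a round ball, giving the inner bounds (for $k=1$ the upper bound on $r_k$ degenerates to the equality $r_1 = 0$).

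\emph{Outer bounds.}
Here the covering radius enters. Let $x \in \BZone{k}{0}{\Zspace^d}$ and consider $B(x, \norm{x})$, which passes through $0$. By \eqref{eqn:SC1} its open version contains at most $k-1$ points of $\Zspace^d \setminus \{0\}$, while by \eqref{eqn:SC2} its closed version contains at least $k-1$ such points, hence at least $k$ points of $\Zspace^d$ once $0$ is counted. For $R_k$ I would \emph{cover}: every point of the shrunk ball $B(x, \norm{x} - \sfrac{\sqrt{d}}{2})$ lies in the unit Voronoi cube of a lattice point at distance less than $\norm{x}$ from $x$, so these cubes cover the shrunk ball and force at least $\nu_d(\norm{x} - \sfrac{\sqrt{d}}{2})^d$ lattice points into the open ball; with the bound $k-1$ this gives $\norm{x} \leq \sqrt[d]{\sfrac{(k-1)}{\nu_d}} + \sfrac{\sqrt{d}}{2} < \sqrt[d]{\sfrac{k}{\nu_d}} + \sfrac{\sqrt{d}}{2}$. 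For $r_k$ I would \emph{pack}: the Voronoi cubes of the lattice points in $\overline{B(x, \norm{x})}$ are interior-disjoint and all contained in $\overline{B(x, \norm{x} + \sfrac{\sqrt{d}}{2})}$, so their number is strictly below $\nu_d(\norm{x} + \sfrac{\sqrt{d}}{2})^d$; since that number is at least $k$, we obtain $\norm{x} > \sqrt[d]{\sfrac{k}{\nu_d}} - \sfrac{\sqrt{d}}{2}$. As $\BZone{k}{0}{\Zspace^d}$ is compact for finite $k$, taking the maximum and the minimum of $\norm{x}$ delivers the stated bounds on $R_k$ and $r_k$.

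\emph{Obstacle and width.}
The main difficulty is bookkeeping in the covering/packing step rather than any deep idea: one must keep the origin—which lies on $\partial B(x,\norm{x})$, hence excluded from the open ball but included in the closed one—correctly counted, keep the inequality directions straight, and extract strictness (the covering estimate is strict from $k-1 < k$, the packing estimate because cube translates cannot tile a ball). With all four distance bounds available, the width bounds follow at once: the inner estimates give $W_k = R_k - r_k > \sqrt[d]{\sfrac{k}{\nu_d}} - \sqrt[d]{\sfrac{(k-1)}{\nu_d}}$, and the outer estimates give $W_k < \sfrac{\sqrt{d}}{2} + \sfrac{\sqrt{d}}{2} = \sqrt{d}$, completing \eqref{eqn:width-integer-lattice}.
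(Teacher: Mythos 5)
Your proposal is correct and takes essentially the same approach as the paper: the inner bounds ($r_k < \sqrt[d]{\sfrac{(k-1)}{\nu_d}}$, $R_k > \sqrt[d]{\sfrac{k}{\nu_d}}$) come from Bieberbach's equal-volume theorem just as in the paper's proof, and your Voronoi-cube covering/packing step is exactly the paper's Gauss-circle-type estimate on the number of integer points in a ball, with the width bounds obtained by subtraction in both cases. The only differences are cosmetic: you make the star-shapedness behind the volume comparison and the open/closed bookkeeping around the point $0$ explicit, where the paper argues more tersely.
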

\begin{proof}
  To prove the upper bound for $r_k$ and the lower bounds for $R_k$ and $W_k$, recall that the $1$-st Brillouin zone is $[- \sfrac{1}{2}, \sfrac{1}{2}]^d$.
  Its $d$-dimensional volume and---by a classic result \cite{Bie39}---the volume of any other Brillouin zone is $\Volume{\BZone{1}{0}{\Zspace^d}} = 1$.
  Let $\varrho_k = \sqrt[d]{\sfrac{k}{\nu_d}}$ and note that $\Volume{B(0, \varrho_k)} = k$.
  The first $k-1$ Brillouin zones of $0$ have a total volume of $k-1$, but since their union is not a perfect geometric ball, they do not cover all of $B(0, \varrho_{k-1})$.
  Hence, $\BZone{k}{0}{\Zspace^d}$ contains points in the interior of $B(0, \varrho_{k-1})$, which implies $r_k < \varrho_{k-1}$.
  A symmetric argument implies $R_k > \varrho_k$.
  This implies that the width satisfies
  \begin{align}
    W_k  &=  R_k - r_k  >  \varrho_k - \varrho_{k-1}
      =  \sqrt[d]{\sfrac{k}{\nu_d}} - \sqrt[d]{\sfrac{(k-1)}{\nu_d}} .
  \end{align}
  To prove the lower bound for $r_k$ and the upper bounds for $R_k$ and $W_k$, we use a straightforward solution to the generalization of the Gauss circle problem to $d$ dimensions; see \cite{Hux03} and references therein for the problem and relevant progress.
  Letting $x \in \Rspace^d$, not necessarily in $\Zspace^d$,
  the number of integer points in $B(x,\rho)$ satisfies
  \begin{align}
    \nu_d [\rho-\sfrac{\sqrt{d}}{2}]^d  &< \card{\mbox{\rm points}}
      <  \nu_d [\rho + \sfrac{\sqrt{d}}{2}]^d .
    \label{eqn:points}
  \end{align}
  To see this, we note that the unit cubes of the points inside $B(x,\rho)$ cover $B(x,\rho-\sfrac{\sqrt{d}}{2})$, which implies the first inequality in \eqref{eqn:points} by a volume argument.
  Symmetrically, these unit cubes are contained in $B(x, \rho+\sfrac{\sqrt{d}}{2})$, which implies the second inequality, again by a volume argument.
  Setting $\rho = \norm{x}$ and $\card{\mbox{\rm points}} = k$, we rearrange the two inequalities and get bounds on the minimum and maximum distances from $0$:
  \begin{align}
    \sqrt[d]{\sfrac{k}{\nu_d}} - \sfrac{\sqrt{d}}{2}  &<  r_k  \leq  \norm{x}
      \leq  R_k  <  \sqrt[d]{\sfrac{k}{\nu_d}} + \sfrac{\sqrt{d}}{2} .
    \label{eqn:points2}
  \end{align}
  This implies that the width is $W_k = R_k - r_k  < \sqrt{d}$, as claimed.
\end{proof}

We remark that the lower bound on the width in Theorem~\ref{thm:width_for_integer_lattices} tends to $0$ when $k$ goes to infinity, while the upper bound is a constant independent of $k$.
Using earlier work by van der Corput \cite{Corput20}, Kwakkel improves upon our upper bound in two dimensions, showing that the width of the $k$-th Brillouin zone goes to zero as $k$ goes to infinity \cite[Theorem~3.2]{kwakkel06}.
With appropriate changes of the constants, all bounds in Theorem~\ref{thm:width_for_integer_lattices} extend to Delone sets in $\Rspace^d$.
Indeed, we can use volume arguments to adjust \eqref{eqn:points} to the more general case of Delone sets, while leaving the rest of the argument as is.

%%%%%%%%%%%%%%%%%%%%%%%%%%%%%%%%%%%%%%%%%%%%%%%
\subsection{Perturbed Integer Lattices}
\label{sec:3.2}
%%%%%%%%%%%%%%%%%%%%%%%%%%%%%%%%%%%%%%%%%%%%%%%

We generalize the lower bound for $r_k$ and the upper bound for $R_k$ in Theorem~\ref{thm:width_for_integer_lattices} to perturbations of $\Zspace^d$.
An upper bound for the width follows.
\begin{theorem}[Width for Perturbed Integer Lattices]
  \label{thm:width_for_perturbed_integer_lattices}
  Let $\varphi \colon \Zspace^d \to \Rspace^d$ be a bounded perturbation with magnitude $\tau < \infty$, and let $k \geq 1$.
  Then the distances and the width of the $k$-th Brillouin zone of $0 \in P = \varphi(\Zspace^d)$ satisfy
  \begin{align}
    \label{eqn:perturbed_lower_and_upper_bound}
    \sqrt[d]{\sfrac{k}{\nu_d}} - \sfrac{\sqrt{d}}{2} - \tau  <  r_k(0)  <  R_k(0)  &<  \sqrt[d]{\sfrac{k}{\nu_d}} + \sfrac{\sqrt{d}}{2} + \tau , \\
    W_k(0) &< \sqrt{d} + 2 \tau.
      \label{eqn:width-perturbed}
  \end{align}
\end{theorem}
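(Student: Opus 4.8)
The plan is to reduce the perturbed case to the integer case already settled in Theorem~\ref{thm:width_for_integer_lattices}. The key observation is that moving each lattice point by at most $\tau$ can only change how many points of $P = \varphi(\Zspace^d)$ fall inside a ball $B(x,\rho)$ by replacing the radius $\rho$ with $\rho\pm\tau$ for a comparison against the integer lattice. Once this is in place, I would simply feed the shifted radii into the $d$-dimensional Gauss-type estimate \eqref{eqn:points} derived in the proof of Theorem~\ref{thm:width_for_integer_lattices}, and argue exactly as in the integer case.

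First I would set up the counting reduction. Fix $x \in \Rspace^d$, write $\rho = \norm{x}$, and recall that the magnitude being $\tau$ means $\Edist{a}{\varphi(a)} \leq \tau$ for every $a \in \Zspace^d$. By the triangle inequality, $a \in B(x, \rho - \tau)$ forces $\varphi(a) \in B(x, \rho)$, and $\varphi(a) \in \overline{B(x, \rho)}$ forces $a \in \overline{B(x, \rho + \tau)}$. Since $\varphi$ is injective these implications preserve cardinalities, so the number of points of $P$ in the open ball $B(x,\rho)$ is at least the number of integer points in $B(x,\rho-\tau)$, and the number of points of $P$ in the closed ball $\overline{B(x,\rho)}$ is at most the number of integer points in $\overline{B(x,\rho+\tau)}$. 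Applying \eqref{eqn:points} with radii $\rho-\tau$ and $\rho+\tau$ then yields the perturbed two-sided bound
\[
  \nu_d \bigl[\rho - \tau - \sfrac{\sqrt{d}}{2}\bigr]^d < \card{P \cap B(x,\rho)} \leq \card{P \cap \overline{B(x,\rho)}} < \nu_d \bigl[\rho + \tau + \sfrac{\sqrt{d}}{2}\bigr]^d .
\]

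Next I would invoke the ball description of the zone. For $x \in \BZone{k}{0}{P}$, condition \eqref{eqn:SC1} says the open ball $B(x,\rho)$ contains at most $k-1$ points of $P \setminus \{0\}$, while condition \eqref{eqn:SC2} says the closed ball contains at least $k-1$ of them; adding the fixed point $\varphi(0)=0\in P$, which lies on $\partial B(x,\rho)$, the closed ball contains at least $k$ points of $P$. Combining the first fact with the lower estimate above gives $\nu_d[\rho-\tau-\sqrt{d}/2]^d < k$, hence $\rho < \sqrt[d]{\sfrac{k}{\nu_d}} + \sfrac{\sqrt{d}}{2} + \tau$, bounding $R_k(0)$ from above; combining the second fact with the upper estimate gives $k < \nu_d[\rho+\tau+\sqrt{d}/2]^d$, hence $\rho > \sqrt[d]{\sfrac{k}{\nu_d}} - \sfrac{\sqrt{d}}{2} - \tau$, bounding $r_k(0)$ from below. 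These are exactly the outer inequalities in \eqref{eqn:perturbed_lower_and_upper_bound}, and subtracting them yields $W_k(0) = R_k(0) - r_k(0) < \sqrt{d} + 2\tau$, which is \eqref{eqn:width-perturbed}.

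I expect the only delicate part to be bookkeeping rather than any new idea: one must keep the open and closed balls separate so that the origin on $\partial B(x,\rho)$ is counted on the closed side (giving the extra ``$+1$'' that turns $k-1$ into $k$) but not on the open side, and one should note that the covering argument behind \eqref{eqn:points} is insensitive to the open/closed distinction thanks to the strict inequalities and the $\sfrac{\sqrt{d}}{2}$ slack. Finally, boundedness of the perturbation, $\tau < \infty$, is precisely what makes the $\tau$-shift of radii meaningful, so the reduction breaks down exactly when it should.
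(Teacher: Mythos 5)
Your proposal is correct and takes essentially the same approach as the paper: both reduce the perturbed count to the integer-lattice count by noting that a perturbation of magnitude $\tau$ shifts ball radii by at most $\tau$, then feed the shifted radii into the Gauss-type estimate \eqref{eqn:points} and read off the bounds from the ball characterization of the zone. The only difference is presentational---the paper fixes the radii $\sqrt[d]{\sfrac{k}{\nu_d}} \pm (\sfrac{\sqrt{d}}{2} + \tau)$ up front, while you carry a general radius $\rho$ and solve for it at the end.
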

\begin{proof}
  Write $\varrho_k = \sqrt[d]{\sfrac{k}{\nu_d}}$ and recall from the proof of Theorem~\ref{thm:width_for_integer_lattices} that the closed balls with radii $\varrho_k \pm \sfrac{\sqrt{d}}{2}$ contain at most and at least $k$ points of $\Zspace^d$, respectively.
  Since $\Edist{a}{\varphi (a)} \leq \tau$, for every $a \in P$, the balls with radii $\varrho_k \pm (\sfrac{\sqrt{d}}{2} + \tau)$ contain at most and at least $k$ points of $P$.
  This implies the claimed lower bound for $r_k(0)$ and the claimed upper bound for $R_k(0)$.
  We get \eqref{eqn:width-perturbed} from $W_k(0) = R_k(0) - r_k(0)$.
\end{proof}

%%%%%%%%%%%%%%%%%%%%%%%%%%%%%%%%%%%%%%%%%%%%%%%
\subsection{Distances and Widths Experimentally}
\label{sec:3.3}
%%%%%%%%%%%%%%%%%%%%%%%%%%%%%%%%%%%%%%%%%%%%%%%

We illustrate Theorems~\ref{thm:width_for_integer_lattices} and \ref{thm:width_for_perturbed_integer_lattices} by constructing Brillouin zones in the plane.
The solid graphs in Figure~\ref{fig:width} give the minimum and maximum distances of the Brillouin zones of $0 \in \Zspace^2$ from $0$, which are bracketed by the upper and lower bounds proved in Theorem~\ref{thm:width_for_integer_lattices}.
The width is the difference between these two distances.
The dotted curves in Figure~\ref{fig:width} show the minimum and maximum distances of the first $57$ Brillouin zones of $0 \in P$ from $0$, in which $P$ is a strong perturbation of $\Zspace^2$.
We see that the perturbation causes only minor displacements of the four graphs.
\begin{figure}[hbt]
  \centering
    \vspace{-0.0in}
     \includegraphics[width=0.625\textwidth]{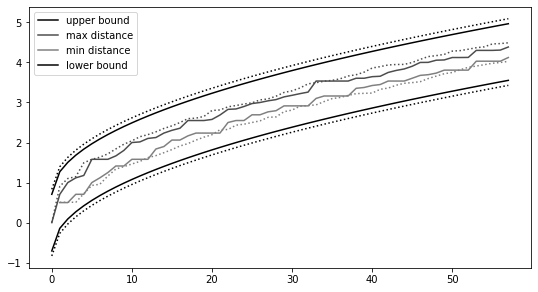}
    \vspace{-0.1in}
    \caption{The \emph{solid} curves show the min and max distances, $r_k$ and $R_k$, of the $k$-th Brillouin zones of $0 \in \Zspace^2$ from $0$, together with their lower and upper bounds.
    For comparison, the \emph{dotted} curves show the same information for a strong perturbation of $\Zspace^2$, so the lowest and highest curves are the bounds from \eqref{eqn:perturbed_lower_and_upper_bound}.
    As detailed in Appendix~\ref{app:A}, the \emph{dotted} curves graphing the min and max distances of the $k$-th Brillouin zone in the perturbed integer lattice are provably correct up to $k = 34$ and may possibly be contaminated by missing bisectors starting from $k = 35$ onward.}
  \label{fig:width}
\end{figure}

%%\clearpage
%%%%%%%%%%%%%%%%%%%%%%%%%%%%%%%%%%%%%%%%%%%%%%%
%%%%%%%%%%%%%%%%%%%%%%%%%%%%%%%%%%%%%%%%%%%%%%%
\section{Stability}
\label{sec:4}
%%%%%%%%%%%%%%%%%%%%%%%%%%%%%%%%%%%%%%%%%%%%%%%
%%%%%%%%%%%%%%%%%%%%%%%%%%%%%%%%%%%%%%%%%%%%%%%

While Theorem~\ref{thm:width_for_perturbed_integer_lattices} bounds the width under perturbations, it falls short of showing that the Brillouin zones are stable, which we prove in this section.
A related result is the stability of Voronoi regions, which was proven by Reem \cite{Ree11}.

%%%%%%%%%%%%%%%%%%%%%%%%%%%%%%%%%%%%%%%%%%%%%%%
\subsection{Two Technical Lemmas}
\label{sec:4.1}
%%%%%%%%%%%%%%%%%%%%%%%%%%%%%%%%%%%%%%%%%%%%%%%

We begin with an observation about the function $\min_k \colon \Rspace^n \to \Rspace$ that maps a vector of $n$ numbers to the $k$-th smallest among them.
We write $x_1, x_2, \ldots, x_n$ for the components of $x \in \Rspace^n$ and $y_1, y_2, \ldots, y_n$ for the components of $y \in \Rspace^n$.
\begin{lemma}[Stability of Rank]
  \label{lem:stability_of_rank}
  For any $1 \leq k \leq n$, the map $\min_k \colon \Rspace^n \to \Rspace$ is $1$-Lipschitz under the $L_\infty$-norm; that is: if $|x_i - y_i| \leq \ee$ for all $1 \leq i \leq n$, then $| \min_k(x) - \min_k(y)| \leq \ee$.
\end{lemma}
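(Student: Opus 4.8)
The plan is to reduce the order statistic to a min--max over index subsets, a form in which the $1$-Lipschitz property is immediate. Specifically, I would use the identity
\[
  \min_k(x) = \min_{\substack{S \subseteq \{1,\dots,n\} \\ |S| = k}} \ \max_{i \in S} x_i ,
\]
where $S$ ranges over all $k$-element index sets. To justify it, choosing $S$ to be the indices of the $k$ smallest components gives $\max_{i \in S} x_i = \min_k(x)$, so the right-hand side is at most $\min_k(x)$; conversely, any $k$-element $S$ must contain an index $i$ with $x_i \geq \min_k(x)$, since at most $k-1$ components are strictly smaller than the $k$-th smallest value, so $\max_{i \in S} x_i \geq \min_k(x)$. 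Phrasing the count in terms of how many components are \emph{strictly} below the $k$-th smallest value makes the identity hold even when components are equal.

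With this representation in hand, the core estimate is that, for each fixed $S$, the map $x \mapsto \max_{i \in S} x_i$ is $1$-Lipschitz under the $L_\infty$-norm: from $x_i \leq y_i + \ee$ for all $i$ we get $\max_{i \in S} x_i \leq \max_{i \in S} y_i + \ee$, and symmetrically, so $| \max_{i \in S} x_i - \max_{i \in S} y_i | \leq \ee$. Taking the minimum over $S$ preserves this bound: if $S^\ast$ attains the minimum for $y$, then $\min_k(x) \leq \max_{i \in S^\ast} x_i \leq \max_{i \in S^\ast} y_i + \ee = \min_k(y) + \ee$, and the symmetric inequality yields $| \min_k(x) - \min_k(y) | \leq \ee$, as claimed.

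There is essentially no deep obstacle; the only point I would want to get exactly right is the verification of the min--max identity in the presence of equal components, which is why the combinatorial step is framed through the count of strictly smaller entries. As an alternative that avoids the identity altogether, I could argue directly by contradiction: if $\min_k(x) > \min_k(y) + \ee$, then each of the (at least $k$) indices $i$ with $y_i \leq \min_k(y)$ satisfies $x_i \leq y_i + \ee < \min_k(x)$, producing at least $k$ components of $x$ strictly below its own $k$-th smallest value, which is impossible. Either route is short; I would favor the min--max formulation as the cleaner of the two.
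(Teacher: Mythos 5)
Your proof is correct, including the tie-handling in the min--max identity and the alternative contradiction argument, but it is packaged differently from the paper's. The paper anchors everything at $x$: it takes $X_0$, the $k$ smallest components of $x$, and $X_1$, the $n-k+1$ largest (ties broken arbitrarily), notes $\max(X_0) = \min_k(x) = \min(X_1)$, and sandwiches $\min_k(y)$ between the $y$-values at those two index sets, using the dual counting facts that $\card{Y_0} = k$ forces $\min_k(y) \leq \max(Y_0)$ and $\card{Y_1} = n-k+1$ forces $\min(Y_1) \leq \min_k(y)$. Your upper bound $\min_k(x) \leq \max_{i \in S^\ast} x_i \leq \min_k(y) + \ee$ is literally one half of that sandwich; where you genuinely differ is the other direction: instead of invoking the dual $(n-k+1)$-element set, you exploit the symmetry of the hypothesis in $x$ and $y$ and reuse the same one-sided estimate with the roles swapped. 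What your route buys is modularity and a reusable general principle: once the identity $\min_k(x) = \min_{|S|=k} \max_{i \in S} x_i$ is in place, Lipschitzness is an instance of the fact that a pointwise minimum of equi-Lipschitz functions is equi-Lipschitz, and the same template covers any order statistic or, more broadly, any function with such a variational representation. What the paper's route buys is a single self-contained two-sided estimate anchored at one point, which never needs the variational identity to be stated or verified in the presence of ties.
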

\begin{proof}
  Let $X_0$ contain the $k$ smallest components of $x$, and let $X_1$ contain the $n-k+1$ largest components of $x$, in which ties are broken arbitrarily.
  We note that $\max (X_0) = \min_k (x) = \min (X_1)$.
  Let $y_i \in Y_0$ iff $x_i \in X_0$, and similarly for $Y_1$ and $X_1$.
  Since corresponding components differ by at most $\ee$, we have $\max (Y_0) - \ee \leq \min_k (x) \leq \min (Y_1) + \ee$, and because $\card{Y_0} = k$ and $\card{Y_1} = n-k+1$, we have $\min (Y_1) \leq \min_k (y) \leq \max (Y_0)$.
  Hence, $\min_k(y) - \ee \leq \min_k (x) \leq \min_k (y) + \ee$, which is equivalent to the claimed inequality.
\end{proof}

The next technical lemma asserts the stability of the intersection of a half-line with the bisector of two points.
In the setting we consider, the half-line emanates from one of the two points, which we assume is $0 \in \Rspace^d$.
Let $a \in \Rspace^d \setminus \{0\}$ be the second point, let $u \in \Sspace^{d-1}$ be the direction of the half-line, and assume $\scalprod{u}{a} > 0$ so that the intersection between the half-line and the bisector of $0, a$ exists.
Writing $\lambda u$ for the points of the half-line, we solve $\lambda = \Edist{\lambda u}{a}$ to get $\lambda (a) = \frac{1}{2} \norm{a}^2 / \scalprod{u}{a}$ as the parameter value of the intersection point.
\begin{lemma}[Stability of Crossing]
  \label{lem:stability_of_crossing}
  Let $u \in \Sspace^{d-1}$ be a direction and $a \in \Rspace^d \setminus \{0\}$ a point with $\scalprod{u}{a} > 0$.
  Then for every $\ee > 0$ there exists $\tau > 0$ such that $\Edist{p}{a} < \tau$ implies that $\lambda (p)$ is well defined and satisfies $\lambda (p) < \lambda (a) + \ee$.
\end{lemma}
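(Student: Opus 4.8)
The plan is to recognize $\lambda$ as a continuous function on the open half-space $H = \{ q \in \Rspace^d \mid \scalprod{u}{q} > 0 \}$ and to deduce the statement from continuity at $a$. Concretely, I would extend the parameter map to $\lambda(q) = \sfrac{1}{2} \norm{q}^2 / \scalprod{u}{q}$ for all $q \in H$; this is precisely the value at which the ray $\{\lambda u \mid \lambda \geq 0\}$ crosses the bisector of $0$ and $q$, and by hypothesis $a \in H$ because $\scalprod{u}{a} > 0$.

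First I would secure the well-definedness of $\lambda(p)$. Setting $\tau_0 = \sfrac{1}{2} \scalprod{u}{a}$, the Cauchy--Schwarz inequality gives $|\scalprod{u}{p} - \scalprod{u}{a}| = |\scalprod{u}{p-a}| \leq \Edist{p}{a}$, so whenever $\Edist{p}{a} < \tau_0$ we have $\scalprod{u}{p} > \scalprod{u}{a} - \tau_0 = \sfrac{1}{2} \scalprod{u}{a} > 0$. Hence $p \in H$, the ray meets the bisector of $0$ and $p$, and $\lambda(p)$ is defined.

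Next, on $H$ the numerator $q \mapsto \norm{q}^2$ and the denominator $q \mapsto \scalprod{u}{q}$ are continuous and the denominator does not vanish, so $\lambda$ is continuous on $H$, in particular at $a$. Continuity at $a$ then yields, for the given $\ee > 0$, some $\tau \in (0, \tau_0]$ with $\Edist{p}{a} < \tau$ implying $|\lambda(p) - \lambda(a)| < \ee$, which in particular gives the asserted one-sided bound $\lambda(p) < \lambda(a) + \ee$. One may equally avoid invoking continuity abstractly and estimate directly: from $\scalprod{u}{p} \geq \sfrac{1}{2} \scalprod{u}{a}$ and $\norm{p} \leq \norm{a} + \tau$ one bounds $\lambda(p) \leq (\norm{a}^2 + 2 \tau \norm{a} + \tau^2) / (2 \scalprod{u}{a} - 2 \tau)$, and the right-hand side tends to $\lambda(a)$ as $\tau \to 0$, hence drops below $\lambda(a) + \ee$ for $\tau$ small enough.

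The only point requiring care---and the closest thing to an obstacle---is keeping the denominator $\scalprod{u}{p}$ bounded away from $0$, since that is exactly what guarantees both that $\lambda(p)$ exists and that the quotient varies continuously; once $\tau$ is capped at $\sfrac{1}{2} \scalprod{u}{a}$ this is automatic, and everything else is routine. I note also that although this argument delivers the two-sided estimate $|\lambda(p) - \lambda(a)| < \ee$, only the upper bound is stated, presumably because the later stability argument needs only that the crossing point cannot move outward along the ray by more than $\ee$.
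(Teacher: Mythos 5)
Your proof is correct and takes essentially the same approach as the paper: you secure $\scalprod{u}{p}>0$ by capping $\tau$ below $\scalprod{u}{a}$ (the paper's $\scalprod{u}{p}\geq\scalprod{u}{a}-\tau$ is exactly your Cauchy--Schwarz step), and your direct estimate $\lambda(p)\leq(\norm{a}+\tau)^2/(2\scalprod{u}{a}-2\tau)$ is precisely the paper's bound on the ratio $\lambda(p)/\lambda(a)$, just not divided through by $\lambda(a)$. The continuity framing is only a repackaging of the same computation.
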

\begin{proof}
  For $\tau < \scalprod{u}{a}$, we have $\scalprod{u}{p} \geq \scalprod{u}{a - \tau u} = \scalprod{u}{a} - \tau > 0$, so the bisector of $0, p$ has a unique intersection point with the half-line of points $\lambda u$, and this intersection point is given by $\lambda (p) = \frac{1}{2} \norm{p}^2 / \scalprod{u}{p}$.
  Hence,
  \begin{align}
    \frac{\lambda(p)}{\lambda(a)}
    &=  \frac{\norm{p}^2 \scalprod{u}{a}}{\norm{a}^2 \scalprod{u}{p}}
    <  \frac{\norm{a + \frac{\tau a}{\norm{a}}}^2 \scalprod{u}{a}}
               {\norm{a}^2 \scalprod{u}{a - \tau u}}
     =  \frac{\norm{a}^2 \scalprod{u}{a}}
             {\norm{a}^2 \scalprod{u}{a}}
        \frac{(1 + \frac{ \tau}{\norm{a}})^2}
             {(1 - \frac{\tau}{\scalprod{u}{a}})},
    \label{eqn:crossing}
  \end{align}
  in which the first ratio of the right-hand side in \eqref{eqn:crossing} cancels.
  Since $\norm{a}$, $\norm{a}^2$, $\scalprod{u}{a}$, and $\lambda(a)$ are all fixed and positive, it is easy to find a sufficiently small $\tau > 0$ so that the remaining ratio is at most $1 + \ee/\lambda(a)$.
  This implies $\lambda (p) < \lambda (a) + \ee$, as required.
\end{proof}

%%%%%%%%%%%%%%%%%%%%%%%%%%%%%%%%%%%%%%%%%%%%%%%
\subsection{The Stability of Brillouin Zones}
\label{sec:4.2}
%%%%%%%%%%%%%%%%%%%%%%%%%%%%%%%%%%%%%%%%%%%%%%%

Write $\domain{k}{0}{\Zspace^d}$ for the set of points $x \in \Rspace^d$ for which fewer than $k$ points in $\Zspace^d \setminus \{0\}$ have distance less than $\norm{x}$ from $x$, and note that $\domain{k}{0}{\Zspace^d}$ is the union of the first $k$ Brillouin zones of $0$.
Since the Brillouin zones have disjoint interiors, this implies that $\BZone{k}{0}{\Zspace^d}$ is the closure of $\domain{k}{0}{\Zspace^d} \setminus \domain{k-1}{0}{\Zspace^d}$.
For the same reason,
\begin{align}
  \partial \BZone{k}{0}{\Zspace^d} &= \partial \domain{k}{0}{\Zspace^d} \cup \partial \domain{k-1}{0}{\Zspace^d}
\end{align}
for the boundary of the $k$-th Brillouin zone.
For any direction $u \in \Sspace^{d-1}$, consider the half-line of points $\lambda u$, with $\lambda \geq 0$, and write $\alpha_k (u)$ for the unique $\lambda$ such that $\lambda u \in \partial \domain{k}{0}{\Zspace^d}$.
Similarly, write $\beta_k (u)$ for the unique $\lambda$ such that $\lambda u \in \partial \domain{k}{0}{P}$, in which
$\varphi \colon \Zspace^d \to \Rspace^d$ is a perturbation of the integer lattice and $P = \varphi (\Zspace^d)$ is the perturbed set.
\begin{theorem}[Stability of Brillouin Zones]
  \label{thm:stability_of_Brillouin_zones}
  For every integer $k \geq 1$ and real $\ee > 0$, there is a sufficiently small $\tau > 0$ such that for every perturbation $\varphi \colon \Zspace^d \to \Rspace^d$ with $\varphi (0) = 0$ and $\sup \{ \Edist{a}{\varphi (a)} \} < \tau$, we have $|\beta_k(u) - \alpha_k(u)| < \ee$ for every direction $u \in \Sspace^{d-1}$.
\end{theorem}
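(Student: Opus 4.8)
The plan is to reduce the statement about the boundaries of $\domain{k}{0}{\Zspace^d}$ and $\domain{k}{0}{P}$ to a statement about order statistics of bisector crossings, and then to combine Lemma~\ref{lem:stability_of_crossing}, which controls each individual crossing, with Lemma~\ref{lem:stability_of_rank}, which controls the $k$-th smallest among them. Fix a direction $u \in \Sspace^{d-1}$. As observed just before the theorem, a point $\lambda u$ lies in $\domain{k}{0}{\Zspace^d}$ exactly when fewer than $k$ of the bisectors of $0$ and $a \in \Zspace^d \setminus \{0\}$ separate $\lambda u$ from $0$; along the ray the bisector of $0,a$ is crossed at the parameter $\lambda(a) = \tfrac12 \norm{a}^2/\scalprod{u}{a}$ whenever $\scalprod{u}{a} > 0$, and the number of crossings below $\lambda$ increases by one at each such value. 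Hence $\alpha_k(u)$ is precisely the $k$-th smallest number in the multiset $\{\lambda(a) \mid a \in \Zspace^d\setminus\{0\},\, \scalprod{u}{a} > 0\}$, and likewise $\beta_k(u)$ is the $k$-th smallest in $\{\lambda(\varphi(a)) \mid a \in \Zspace^d\setminus\{0\},\, \scalprod{u}{\varphi(a)} > 0\}$, the two multisets being indexed by the same set $\Zspace^d\setminus\{0\}$ through $a \leftrightarrow \varphi(a)$.

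The second step is to make the index set finite, uniformly in $u$. By Theorems~\ref{thm:width_for_integer_lattices} and \ref{thm:width_for_perturbed_integer_lattices}, both $\alpha_k(u)$ and $\beta_k(u)$ are bounded above by some $\rho^*$ depending only on $k$ and $d$ (taking $\tau < 1$), independent of $u$. Since $\scalprod{u}{a} \leq \norm{a}$ forces $\lambda(a) \geq \norm{a}/2$, any $a$ with $\norm{a} > 2\rho^* + 2$ has $\lambda(a) > \rho^*$ and $\lambda(\varphi(a)) > \rho^*$ for $\tau < 1$, so it can never contribute one of the $k$ smallest crossings for either lattice. Consequently both $\alpha_k(u)$ and $\beta_k(u)$ are already determined by the fixed finite set $F = (\Zspace^d \cap B(0, R))\setminus\{0\}$ with $R = 2\rho^* + 2$, and this finite index set is the same for every direction $u$.

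The third step applies the two lemmas on $F$. Fix $\ee > 0$, put $\Lambda = \rho^* + 2\ee$, and replace each crossing by its truncation at $\Lambda$, assigning the value $\Lambda$ when $\scalprod{u}{\cdot} \leq 0$ so that no crossing exists. Because the $k$ smallest crossings are at most $\rho^* < \Lambda$, truncation changes neither $\alpha_k(u)$ nor $\beta_k(u)$, so these remain the $k$-th smallest entries of the two truncated vectors indexed by $F$. By Lemma~\ref{lem:stability_of_rank} it then suffices to show that for all sufficiently small $\tau$ the two truncated vectors differ by less than $\ee$ in each coordinate, uniformly over $u \in \Sspace^{d-1}$ and $a \in F$. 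For a coordinate whose unperturbed crossing is at most $\Lambda$ we have $\scalprod{u}{a} \geq \norm{a}^2/(2\Lambda) \geq 1/(2\Lambda)$, a uniform lower bound; on the compact set of such pairs $(u,a)$ the map sending $(u,p)$ to its crossing is uniformly continuous, so the estimate in Lemma~\ref{lem:stability_of_crossing}, together with its evident lower-bound counterpart, yields a single $\tau$ making $|\lambda(\varphi(a)) - \lambda(a)| < \ee$ for all of them simultaneously. For a coordinate whose unperturbed crossing is absent or exceeds $\Lambda$, so its truncated value is $\Lambda$, one checks directly from $\norm{\varphi(a)} \geq \norm{a} - \tau$ and $\scalprod{u}{\varphi(a)} \leq \scalprod{u}{a} + \tau$ that the perturbed crossing still exceeds $\Lambda - \ee$ once $\tau$ is small enough, again uniformly since $F$ is finite, so both truncated values lie within $\ee$ of each other. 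Combining the two cases and invoking Lemma~\ref{lem:stability_of_rank} gives $|\beta_k(u) - \alpha_k(u)| < \ee$ for every $u$.

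The main obstacle is exactly the uniformity in $u$: Lemma~\ref{lem:stability_of_crossing} supplies a $\tau$ that a priori depends on $u$ and $a$, and for directions nearly orthogonal to a point $a$ the crossing $\lambda(a)$ blows up and can disappear altogether. The two devices above—restricting to the fixed finite set $F$ and truncating at $\Lambda$—are precisely what remove this dependence, since they confine the analysis to crossings with $\scalprod{u}{a}$ bounded away from $0$, where a compactness argument upgrades the pointwise stability of crossing to a bound holding uniformly over all directions at once.
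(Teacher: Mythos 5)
Your proof is correct, and it shares the paper's overall skeleton---identify $\alpha_k(u)$ and $\beta_k(u)$ with $k$-th order statistics of bisector crossings along the ray, control individual crossings via Lemma~\ref{lem:stability_of_crossing}, and finish with Lemma~\ref{lem:stability_of_rank}---but your treatment of the technical core (finiteness of the index set and uniformity in $u$) takes a genuinely different route. The paper builds a \emph{direction-dependent} finite set $A$ from the ball $B_0 = B(\lambda_0 u, \lambda_0)$ tangent to the origin, chosen so that every point of $A$ and every perturbed image has a well-defined crossing; this forces the geometric estimates involving $d_1$, $d_2$ and the half-spaces $H$, $H_\ell$, plus a separate argument that these constants do not depend on $u$ or $\varphi$. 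You instead fix the \emph{direction-independent} set $F = (\Zspace^d \cap B(0,R)) \setminus \{0\}$, tolerate points of $F$ whose crossings are huge or nonexistent, and neutralize them by truncating all coordinates at $\Lambda = \rho^* + 2\ee$; since truncation is $1$-Lipschitz (so $|\min(x,\Lambda) - \min(y,\Lambda)| \leq |x-y|$) and leaves the $k$ smallest entries untouched, Lemma~\ref{lem:stability_of_rank} still applies. Your route buys nearly automatic uniformity in $u$: the index set never moves, your first case (crossing at most $\Lambda$, hence $\scalprod{u}{a} \geq 1/(2\Lambda)$) is uniform by compactness, and your second case (crossing absent or exceeding $\Lambda$) is uniform by a direct estimate over the finite set $F$. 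The cost is the truncation bookkeeping, including the check that the perturbed crossing stays above $\Lambda - \ee$ in the second case. The paper's route keeps every coordinate an honest crossing value, so the rank lemma needs no adaptation, at the cost of the tangent-ball geometry. Both arguments produce a $\tau$ depending only on $d$, $k$, and $\ee$, as the theorem requires.
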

\begin{proof}
  Assume a sufficiently small $\tau > 0$, and let $\varphi \colon \Zspace^d \to \Rspace^d$ be a perturbation of the integer grid such that $\varphi (0) = 0$ and $\sup \{ \Edist{a}{\varphi (a)} \} < \tau$.
  Fixing a direction $u \in \Sspace^{d-1}$, we begin by constructing a set $A \subseteq \Zspace^d \setminus \{0\}$ that satisfies the conditions needed to apply Lemmas~\ref{lem:stability_of_rank} and \ref{lem:stability_of_crossing}; that is:
  \smallskip \begin{enumerate}
    \item $\scalprod{u}{a} > 0$ so $\lambda(a) = \frac{1}{2} \norm{a}^2 / \scalprod{u}{a}$ is well defined for every $a \in A$,
    \item $\alpha_k (u)$ is the $k$-th smallest of the $\lambda (a)$, $a \in A$,
    \item $\scalprod{u}{p} > 0$ so $\lambda(p) = \frac{1}{2} \norm{p}^2 / \scalprod{u}{p}$ is well defined for every $p \in \varphi (A)$,
    \item $\beta_k (u)$ is the $k$-th smallest of the $\lambda (p)$, $p \in \varphi (A)$.
  \end{enumerate} \smallskip
  In short, to satisfy Conditions 1 and 3, we pick points in the open half-space defined by $\scalprod{x}{u} > 0$, and to satisfy Conditions 2 and 4, we include sufficiently many points whose bisectors intersect the half-line defined by $u$ near $0$.
  To be specific, we set $\lambda_0 = \sqrt[d]{{k}/{\nu_d}} + {\sqrt{d}}/{2} +1$,
  and let $B_0 = B(\lambda_0 u, \lambda_0)$ be the open ball passing through $0$.
  By Theorem~\ref{thm:width_for_integer_lattices}, $B_0$ contains at least $k$ integer points, including the $k$ points whose bisectors with $0$ intersect the half-line at the first $k$ crossings.
  Assuming $\tau \leq 1$, by Theorem~\ref{thm:width_for_perturbed_integer_lattices}, $B_0$ contains at least $k$ points of $\varphi (\Zspace^d)$, including the $k$ points whose bisectors with $0$ intersect the half-line at the first $k$ crossings.
  The set $A$ consists of all points $a \in \Zspace^d \setminus \{0\}$ such that $a\in B_0$ %is enclosed by $S_1$ 
  or $\varphi (a) \in B_0$. % is enclosed by $S_2$.
  Observe that for this choice of $A$, Conditions~2 and 4 are satisfied.

  \smallskip
  To establish the remaining two properties,
  note that $a\in B_0 \cap \Zspace^d \setminus \{ 0 \} \subseteq B_0 \setminus B(0,1)$ trivially satisfies $\scalprod{u}{a} > 0 $. 
  Assuming $\tau$ is smaller than $d_1=\inf \{ d(x,H)\ | \ x\in B_0 \setminus B(0,1)\}$, in which $H$ is the half-space of points $y$ that satisfy $\scalprod{u}{y} \leq 0 $,
  %the smallest distance from any point of $ B_0 \setminus B(0,1)$ to the half-space $H$ defined by $\scalprod{u}{\cdot} \leq 0 $, 
  the perturbation $p=\varphi(a)$ satisfies $\scalprod{u}{p} > 0 $ as well; see Figure~\ref{fig:proof4dot3}. 
  Note that this condition on $\tau$ depends neither on $u$ nor on $\varphi$, only on the radius $\lambda_0$ and as such on $d$ and $k$.
  \begin{figure}[hbt]
  \centering
    %\vspace{0.0in}
    \includegraphics[width=0.8\linewidth, trim = {0cm 0.8cm 0cm 0cm}, clip]{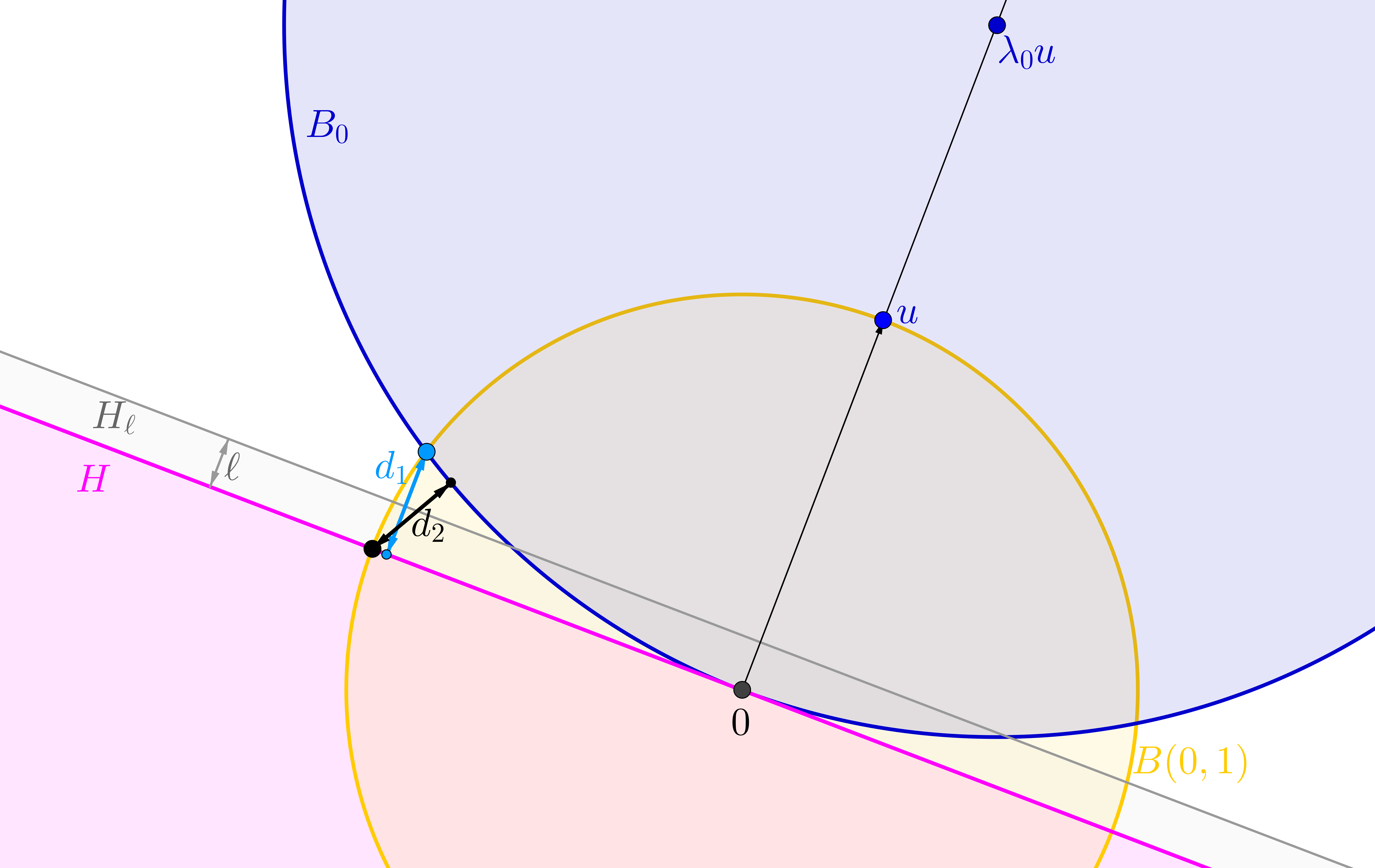}
    \caption{The open \emph{blue} ball, $B_0$, is used to define the finite set $A$. 
    The open \emph{yellow} ball, $B(0,1)$, does not contain any integer points apart from $0$.
    Assuming $\tau \leq \min ( d_1 , d_2)$, the points of $A$ and their perturbations avoid the \emph{magenta} half-space, $H$, which is necessary to apply Lemma~\ref{lem:stability_of_crossing}. 
    In fact, in order to remove the dependence of Lemma~\ref{lem:stability_of_crossing} on $u$ and $A$, we need to choose $\tau$ even smaller to avoid the \emph{gray} half-space, $H_\ell$.
    Note that the distances $d_1, d_2$ remain constant as we rotate $u$.
    }
  \label{fig:proof4dot3}
\end{figure}

  Similarly, points $p=\varphi(a)\in B_0$ satisfy $\scalprod{u}{p} > 0 $ trivially, but we need to ascertain $\scalprod{u}{a} > 0$ by showing that every $a \in H \cap \Zspace^d \setminus \{ 0 \} \subseteq H \setminus B(0,1)$ is further than $\tau$ from $B_0$. 
  Assuming $\tau$ is smaller than 
  $d_2=\inf \{ d(x,B_0)\ | \ x\in H \setminus B(0,1)\}$,
  %the smallest distance from any point of $H \setminus B(0,1)$ to $B_0$, 
  no point in $a \in \Zspace^d \setminus \{ 0 \}$ with $\scalprod{u}{a} \leq 0$ can be perturbed into $B_0$.
  Again, note that the condition on $\tau$ depends neither on $u$ nor on $\varphi$. %, only on the radius $\lambda_0$ and as such on $d$ and $k$.
  Hence, Conditions~1 and 3 and therefore all four conditions are satisfied.
  %Alternative position for the stricter bounds: \TerT{However, in the following we will need even stricter conditions, therefore assume $\tau$ is smaller than $\ell= \frac{1}{2} \min ( d_1 , d_2)$ ...}
  
  \smallskip
  For the last step, let $n = \card{A}$ and write $a_1, a_2, \ldots, a_n$ for the points in $A$.
  Let $x_i = \lambda (a_i)$ and $y_i = \lambda (\varphi(a_i))$, and assume that $\tau > 0$ was chosen so that Lemma~\ref{lem:stability_of_crossing} implies $|x_i - y_i| < \ee$. 
  Here we use Lemma~\ref{lem:stability_of_crossing} twice: once for $a = a_i$ and $p = \varphi (a_i)$ and the second time for $a = \varphi (a_i)$ and $p = a_i$, with the second application justified by Condition~3.
  We notice that the condition on $\tau$ can be made independently of $u$ and $\varphi$, by observing that the bounds needed to satisfy Conditions~1 and 3 can be strengthened: 
  by setting $\ell = \frac{1}{2} \min \{ d_1 , d_2 \}$ and assuming $\tau$ is smaller than $\inf \{ d(x,H_\ell)\ | \ x\in B_0 \setminus B(0,1)\}$, in which $H_\ell$ is the half-space of points $y$ satisfying $\scalprod{u}{y} \leq \ell $, and assuming $\tau$ is smaller than $\inf \{ d(x,B_0)\ | \ x\in H_\ell \setminus B(0,1)\}$, we can
  lower bound $\scalprod{u}{a},\scalprod{u}{\varphi(a)}, \norm{a}, \norm{\varphi(a)}$ by $\ell$ for all $a \in A$. 
  In addition, $2\lambda_0+1$ is an upper bound for $\norm{a}, \norm{\varphi(a)}$ for all $a \in A$. 
  With these bounds, we can choose $\tau$ small enough, independently from $u$ and $\varphi$, so that Lemma~\ref{lem:stability_of_crossing} holds.

  Lemma~\ref{lem:stability_of_rank} now implies $|\beta_k(u) - \alpha_k (u)| < \ee$.
  Since all conditions on $\tau$ depend only on $d,k,\ee$ and not on $u,\varphi$, this completes the proof of the claim for any perturbation $\varphi$ with magnitude less than $\tau$ and for any direction $u$.
\end{proof}
In words, Theorem \ref{thm:stability_of_Brillouin_zones} asserts that the difference between the distances of the outer boundaries of the $k$-th Brillouin zones in a given direction from the origin---before and after the perturbation---goes to $0$ when $\tau$ tends to $0$.
This also holds for their inner boundaries, which are the outer boundaries of the $(k-1)$-st Brillouin zones.
Hence, the Hausdorff distance between the $k$-th Brillouin zones---before and after the perturbation---goes to $0$ when $\tau$ tends to $0$.
Theorem~\ref{thm:stability_of_Brillouin_zones} also holds for Delone sets in $\Rspace^d$.

%%%%%%%%%%%%%%%%%%%%%%%%%%%%%%%%%%%%%%%%%%%%%%%
\subsection{Stability Experimentally}
\label{sec:4.3}
%%%%%%%%%%%%%%%%%%%%%%%%%%%%%%%%%%%%%%%%%%%%%%%

We use an indirect approach to probe the stability of the Brillouin zones experimentally.
For the integer lattice, every Brillouin zone has area $1.0$, but for a perturbation, this is no longer necessarily true.
It would be interesting to know how perturbations affect the area of the zones.
Clearly, the area exchange is a zero-sum game, so we expect some oscillation around $1.0$, and this is confirmed by the graphs in the top panel of Figure~\ref{fig:area}.
As suggested by the graphs in the bottom panel of Figure~\ref{fig:area} and implied by Theorem~\ref{thm:width_for_perturbed_integer_lattices}, the average area of the first $k$ Brillouin zones of $0$ in a perturbation of $\Zspace^2$ converges to $1.0$.
\begin{figure}[htb]
  \centering
    \vspace{0.1in}
    \includegraphics[width=0.625\textwidth]{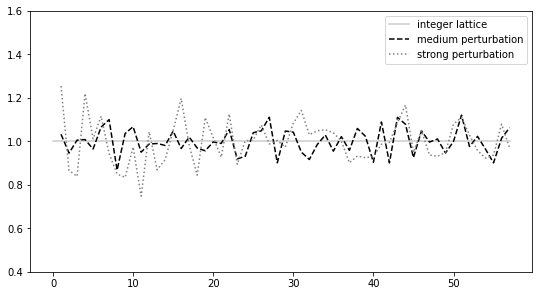} \\
    \includegraphics[width=0.625\textwidth]{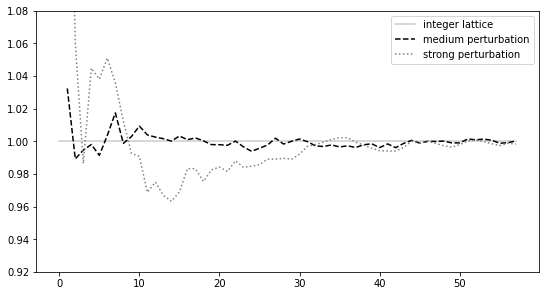} \\
    \vspace{-0.1in}
    \caption{\emph{Top:} the area of the $k$-th Brillouin zone of $0$ in $\Zspace^2$ and in two perturbations of $\Zspace^2$.
    We get progressively more noisy curves for increasing strength of the perturbation.
    \emph{Bottom:} the cumulative area of the first $k$ Brillouin zones divided by $k$.
    We get a straight line for $\Zspace^2$ and graphs that converge to it for perturbations of $\Zspace^2$. 
    Similar to Figure~\ref{fig:width}, the curves of the medium and strong perturbations may be inaccurate beyond $k = 52$ and $k = 34$, respectively.}
  \label{fig:area}
\end{figure}

We define the \emph{outer perimeter} of the $k$-th Brillouin zone as the length of the boundary of the union of the first $k$ Brillouin zones, which is $\domain{k}{0}{A}$.
By comparing it with the length of the circle bounding a disk of the same area as $\domain{k}{0}{A}$, we get the \emph{distortion} of the outer perimeter.
A recent analysis of the distortion of curves in a different context identified $4/\pi$ as a universal constant for the distortion of length \cite{EdNi21}, see also \cite{BTZ00}.
We therefore compare the distortions we get for the integer lattice and two perturbations of it with this constant in Figure~\ref{fig:perimeter}.
The findings encourage us to ask for a proof that also in our context the distortion converges to $4/\pi$.
\begin{figure}[htb]
  \centering
    \vspace{0.1in}
    \includegraphics[width=0.625\textwidth]{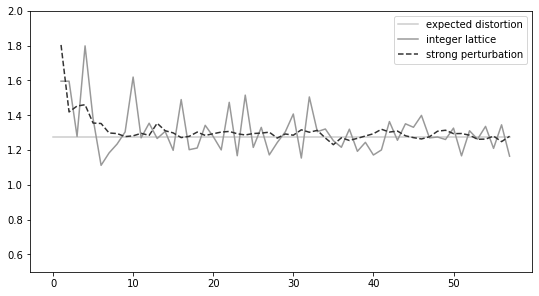}
    \vspace{-0.1in}
    \caption{The outer perimeter of the $k$-th Brillouin zone of $0 \in \Zspace^2$ divided by the perimeter of the equal area disk, the same ratio for a strong perturbation of $\Zspace^2$, and the conjectured limit of the distortion at ${4}/{\pi} = 1.27\ldots$. 
    Similar to Figure~\ref{fig:width}, the curve of the strong perturbations may be inaccurate beyond $k =34$.}
  \label{fig:perimeter}
\end{figure}

%%%%%%%%%%%%%%%%%%%%%%%%%%%%%%%%%%%%%%%%%%%%%%%
%%%%%%%%%%%%%%%%%%%%%%%%%%%%%%%%%%%%%%%%%%%%%%%
\section{Number of Chambers}
\label{sec:5}
%%%%%%%%%%%%%%%%%%%%%%%%%%%%%%%%%%%%%%%%%%%%%%%
%%%%%%%%%%%%%%%%%%%%%%%%%%%%%%%%%%%%%%%%%%%%%%%
 
 We get upper bounds on the number of chambers of individual Brillouin zones from the relation of the zones to $k$-sets and order-$k$ Voronoi tessellations.

%%%%%%%%%%%%%%%%%%%%%%%%%%%%%%%%%%%%%%%%%%%%%%%
\subsection{Integer Lattice in the Plane}
\label{sec:5.1}
%%%%%%%%%%%%%%%%%%%%%%%%%%%%%%%%%%%%%%%%%%%%%%%

As mentioned in Section~\ref{sec:2}, there is a connection between $k$-th Brillouin zones and order-$k$ Voronoi tessellations, which we exploit to get a linear upper bound on the number of chambers in the $2$-dimensional lattice case.
We also prove a lower bound, for which we need an extension of a well known number theoretic fact.
We begin with this extension.
\begin{lemma}[\#Integer Points on Circle]
  \label{lem:integer_points_on_circle}
  For every $\ee > 0$, every circle of radius $R$ passes through at most $O(R^\ee)$ integer points.
\end{lemma}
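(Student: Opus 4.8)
The plan is to reduce the count to the classical bound on representations of an integer as a sum of two squares. Recall that if $r_2(n)$ denotes the number of ordered pairs $(\alpha,\beta) \in \Zspace^2$ with $\alpha^2 + \beta^2 = n$, then $r_2(n) \leq 4 d(n)$, where $d(n)$ is the number of divisors of $n$; since $d(n) = O(n^\ee)$ for every $\ee > 0$, we also have $r_2(n) = O(n^\ee)$. I would treat this divisor bound as a black box and spend the actual work on a geometric reduction that encodes the integer points on the circle as sum-of-two-squares representations of a single, size-controlled integer. First I would dispose of the trivial cases: if the circle passes through at most two integer points, the bound holds for all $R \geq 1$ with room to spare, so I may assume it passes through $N \geq 3$ integer points. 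Translating by an integer vector---which preserves both $\Zspace^2$ and the radius---I may further assume that the origin lies on the circle, so its center $m$ satisfies $\norm{m} = R$, and every integer point on the circle lies in $B(0,2R)$.

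Next I would bound the denominator of the center. Choosing two further integer points $b,c$ on the circle, the relations $\norm{m}^2 = \norm{m-b}^2$ and $\norm{m}^2 = \norm{m-c}^2$ simplify to the linear system $2\scalprod{m}{b} = \norm{b}^2$ and $2\scalprod{m}{c} = \norm{c}^2$. Its determinant $\Delta = b_1 c_2 - b_2 c_1$ is a nonzero integer (as $0,b,c$ are concyclic, hence not collinear) with $|\Delta| \leq \norm{b}\,\norm{c} \leq 4R^2$. By Cramer's rule $m = (P/(2\Delta),\, Q/(2\Delta))$ for integers $P,Q$, so $s := 2\Delta$ is a common denominator of the coordinates of $m$ with $|s| \leq 8R^2$.

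Finally I would clear denominators. For each integer point $x$ on the circle, multiplying $\norm{x-m}^2 = R^2$ by $s^2$ yields $(sx_1 - P)^2 + (sx_2 - Q)^2 = s^2 R^2 =: M$. Since the left-hand side is a nonnegative integer, so is $M$; moreover $M = P^2 + Q^2$ and $M = s^2 R^2 \leq 64 R^6$. The map $x \mapsto (sx_1 - P,\, sx_2 - Q)$ is injective (its linear part is $s \cdot \mathrm{Id}$), so the integer points on the circle inject into the ordered sum-of-two-squares representations of $M$. Hence $N \leq r_2(M) = O(M^\ee) = O(R^{6\ee})$, and since $\ee > 0$ is arbitrary, replacing it by $\ee/6$ gives $N = O(R^\ee)$, as required.

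The genuinely nonelementary ingredient is the divisor bound $d(n) = O(n^\ee)$; everything else is elementary. The step requiring the most care---the main obstacle---is controlling the size of the integer $M$, that is, bounding the denominator of the circumcenter. This is precisely why I would translate an integer point to the origin at the outset: doing so forces $\norm{m} = R$ and keeps all coordinates of $b$ and $c$ of size $O(R)$, so that $M = O(R^6)$ rather than a quantity depending on the a priori unbounded location of the center.
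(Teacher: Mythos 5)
Your proof is correct and takes essentially the same route as the paper: translate an integer point to the origin, solve the chord/bisector system by Cramer's rule to bound the denominator of the center by $O(R^2)$, clear denominators to land on an integer-centered situation with radius (or modulus $M$) polynomial in $R$, and invoke the divisor bound on sums of two squares, absorbing the polynomial blow-up by rescaling $\varepsilon$. The only cosmetic difference is that the paper phrases the final step as scaling the circle by $\Delta$ and appealing to its origin-centered case, whereas you map each point via $x \mapsto sx - (P,Q)$ directly into the representations of $M = s^2R^2$; these are the same operation up to a translation.
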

\begin{proof}
  Let $\Gamma$ be a circle with radius $R$ and assume first that it is centered at the origin in $\Rspace^2$.
  Let $n (R)$ be the number of integer points on $\Gamma$, which is the number of ways we can write $R^2$ as a sum of two squares.
  It is known that $n (R)$ is at most some constant times the number of divisors of $R$ \cite[Lemma 2]{CoHi07}, which implies $n (R) = O(R^\ee)$ for every $\ee > 0$; see \cite{HaWr08}.

  \smallskip
  Next consider the more general case, in which the center of $\Gamma$ is not an integer point.
  We can assume that $\Gamma$ passes through at least three integer points, else there is nothing to prove.
  We may also assume that one of these three points is $0 = (0,0)$, and we write $a = (a_1, a_2)$ and $b = (b_1, b_2)$ for the other two integer points.
  The center of $\Gamma$ is at the intersection of the two bisectors defined by $0, a$ and by $0, b$.
  Equivalently, its coordinates are the solutions to the linear system
  \begin{align}
    2 a_1 x_1 + 2 a_2 x_2  &=  a_1^2 + a_2^2 , 
      \label{eqn:coordinate1} \\
    2 b_1 x_1 + 2 b_2 x_2  &=  b_1^2 + b_2^2 .
      \label{eqn:coordinate2}
  \end{align}
  Using Cramer's Rule, we get the coordinates of the center as ratios of integer determinants, and the two ratios share the denominator, which is $\Delta = 2 a_1 b_2 - 2 a_2 b_1$.
  By assumption, the radius of $\Gamma$ is $R$, which implies that the coordinates are at most $2R$, and thus $|\Delta| \leq 16 R^2$.
  Scaling $\Gamma$ by $\Delta$, we get a new circle, with radius at most $16 R^3$, whose center is an integer point.
  The integer points on $\Gamma$ map to integer points on the new circle, but there can be only $n (\Delta R)$ such integer points, which is $O (R^\ee)$, for every $\ee > 0$.
\end{proof}
\begin{theorem}[\#Chambers for 2D Integer Lattice]
  \label{thm:chambers_for_2D_integer_lattice}
  For every $k \geq 2$ and $\ee>0$, the number of chambers in the $k$-th Brillouin zone of $0 \in \Zspace^2$ is at least $\Omega (k^{1-\ee})$ and at most $6k-6$.
\end{theorem}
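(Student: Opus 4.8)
The plan is to prove the two bounds by different means: the upper bound follows from the region count of the spherical order-$k$ Brillouin tessellation together with the translational symmetry of the lattice, whereas the lower bound needs the number-theoretic input of Lemma~\ref{lem:integer_points_on_circle} to keep the degeneracies of $\Zspace^2$ under control.

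\emph{Upper bound.} First I would pass from the infinite lattice to a finite configuration. By Theorem~\ref{thm:width_for_integer_lattices} the $k$-th Brillouin zone of a point lies in the annulus of radii $r_k, R_k = O(\sqrt{k})$ about that point, so the zone is determined by the $O(k)$ lattice points within distance $2R_k$. For a large $N \times N$ block $B_N \subseteq \Zspace^2$ with $n = N^2$ points, call a point \emph{interior} if all lattice points within distance $2R_k$ of it also lie in $B_N$; an interior point has the same $k$-th Brillouin zone with respect to $B_N$ as with respect to $\Zspace^2$, and by translational symmetry every interior point has the same number $C$ of chambers. These chambers are disjoint regions of the order-$k$ Brillouin tessellation of $B_N$, whose total number of regions is at most $(6k-6)(n-k) \leq (6k-6)n$ by Corollary~\ref{cor:order-k_Brillouin_tessellation} (off general position the equality becomes an upper bound, which also holds in the Euclidean rather than spherical view). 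Hence $C \cdot \#\{\text{interior points}\} \leq (6k-6)n$, and since the fraction of interior points tends to $1$ as $N \to \infty$ while $C$ is a fixed integer, we conclude $C \leq 6k-6$.

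\emph{Lower bound.} Here I would count the vertices of the bisector arrangement lying in the open zone. Call an unordered pair $\{a,b\}$ of lattice points a \emph{generic crossing} if $\mathrm{bis}(0,a)$ and $\mathrm{bis}(0,b)$ meet transversally at a single point of the open zone that is a vertex of the order-$(k-1)$ or order-$k$ Voronoi tessellation of $0$, and write $G$ for their number. The first task is to show $G = \Omega(k)$: this is the generic vertex complexity of one point's zone, which is of order $k$ by the counts underlying Corollary~\ref{cor:order-k_Brillouin_tessellation}, and it persists for $\Zspace^2$ because a transversal crossing strictly inside the open annulus, carrying its prescribed count of closer points, survives the degeneration from a general-position perturbation back to the exact lattice (with Theorem~\ref{thm:stability_of_Brillouin_zones} keeping the zone, and hence the relevant crossings, in place). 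Next I would bound the collapse: if $\ell$ lattice points including $0$ are concyclic with common center $v$ in the zone, then the $\ell-1$ bisectors $\mathrm{bis}(0,a_i)$ all pass through $v$, so $v$ absorbs $\binom{\ell-1}{2}$ generic crossings. Since that circle has radius $\norm{v} \leq R_k = O(\sqrt{k})$, Lemma~\ref{lem:integer_points_on_circle} gives $\ell = O(k^{\varepsilon})$ for every $\varepsilon > 0$, so each physical vertex absorbs $\binom{\ell-1}{2} = O(k^{\varepsilon})$ crossings and the number $V$ of physical vertices in the open zone satisfies $V \geq G / O(k^{\varepsilon}) = \Omega(k^{1-\varepsilon})$.

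Finally I would convert vertices into chambers. By Lemma~\ref{lem:thickened_sphere} the zone has the homotopy type of $\Sspace^1$, so its arrangement is an annular cell complex of Euler characteristic $0$ and the numbers of vertices, edges, and chambers satisfy $C = E - V$; as every vertex of the arrangement in the closed zone has degree at least $3$, we have $2E \geq 3V$, hence $C \geq \tfrac{1}{2}V = \Omega(k^{1-\varepsilon})$, as claimed. The main obstacle is establishing $G = \Omega(k)$ rigorously—one must argue that the generic order-$k$ vertex complexity of a single point's zone is genuinely linear in $k$ and that enough of these crossings remain transversal and interior when the general-position perturbation is specialized to $\Zspace^2$. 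The collapse bound via Lemma~\ref{lem:integer_points_on_circle} and the Euler-characteristic bookkeeping are comparatively routine once $G$ is under control.
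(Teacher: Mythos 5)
Your upper bound is correct and follows the paper's own route almost verbatim: the paper likewise passes to a large finite piece of the lattice (a disk $B(0,\rho)$ rather than a square block), notes that points far from the boundary of the piece have the same $k$-th Brillouin zone as in $\Zspace^2$, applies Corollary~\ref{cor:order-k_Brillouin_tessellation} to bound the total number of regions, and lets the piece grow so that the boundary-affected points become negligible.

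The lower bound is where the proposal breaks down, and exactly at the step you flag as the main obstacle: $G = \Omega(k)$ is never established, and the justification you sketch cannot be made to work. Corollary~\ref{cor:order-k_Brillouin_tessellation} is a general-position count, and $\Zspace^2$ is maximally degenerate; passing from a general-position perturbation back to the lattice is precisely the limit in which transversal crossings merge and migrate, so ``survival under degeneration'' is the whole difficulty, not an argument. Theorem~\ref{thm:stability_of_Brillouin_zones} does not rescue this: for fixed $k$ it controls the Hausdorff distance between zone boundaries for perturbations of magnitude below some $\tau$ depending on $k$ and $\ee$, but it neither keeps individual crossings inside the zone, nor keeps them distinct, nor preserves their number. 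There is also a structural flaw in the quantity you propose to count: no vertex of the bisector arrangement lies in the open zone at all, because the cells around any arrangement vertex carry labels (number of separating bisectors) that differ by exactly one between adjacent sectors, so they cannot all equal $k-1$; every vertex lies on the zone's boundary. One can instead count vertices of the closed zone's cell complex, but then the Euler-characteristic step fails as stated: $2E \geq 3V$ is false because convex chamber corners have degree $2$ (the first zone of $\Zspace^2$ is a square with four such corners), so degree-$2$ vertices must be excluded and can be numerous. (Also, $\binom{\ell-1}{2}$ with $\ell = O(k^{\ee})$ is $O(k^{2\ee})$, not $O(k^{\ee})$; harmless since $\ee$ is arbitrary, but symptomatic.) The paper's lower bound avoids vertex counting entirely: for an interior point $x$ of a chamber, let $A(x)$ be the set of $k-1$ lattice points inside $B(x,\norm{x})$; then $A(x)$ and $A(-x)$ are disjoint because the two balls have disjoint interiors, and consecutive chambers of the cyclic ring can exchange only points lying on the circle through $0$ centered at the shared vertex---a circle of radius $O(\sqrt{k})$, hence carrying $O(k^{\ee})$ lattice points by Lemma~\ref{lem:integer_points_on_circle}---so walking half the ring from the chamber of $x$ to that of $-x$ forces $\Omega(k^{1-\ee})$ chambers. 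If you wish to keep your vertex-counting scheme, you would have to prove $G=\Omega(k)$ for the lattice directly, which appears no easier than the theorem itself.
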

\begin{proof}
  We first prove the lower bound.
  Let $x$ be an interior point of a chamber in the $k$-th Brillouin zone of $0$, and write $A(x)$ for the $k-1$ integer points in the interior of $B(x, \norm{x})$.
  The point $-x$ is interior to the diametrically opposite chamber, and $A(-x)$ is another set of $k-1$ integer points.
  Since $B(x,\norm{x})$ and $B(-x,\norm{x})$ have disjoint interiors, $A(x)$ and $A(-x)$ are disjoint.
  
  For $k \geq 2$, the $k$-th Brillouin zone consists of a cyclic ring of chambers, each sharing a vertex with its predecessor and another vertex with its successor along the cyclic order.
  For each chamber, we get a set of $k-1$ integer points, and we consider the sets of two consecutive chambers that share a vertex $y$.
  If a point $a \in \Zspace^2$ belongs to one set but not the other, then the bisector of $0$ and $a$ passes through $y$.
  The integer points whose bisectors with $0$ pass through $y$ all lie on the circle centered at $y$ and passing through $0$.
  By Theorem~\ref{thm:width_for_integer_lattices}, the radius of this circle is at most some constant times $\sqrt{k}$, so Lemma \ref{lem:integer_points_on_circle} implies that there are at most $O (k^\ee)$ integer points on this circle, for any $\ee > 0$.
  The claimed lower bound follows because we need at least $\Omega (k^{1-\ee})$ steps between adjacent chambers of the $k$-th Brillouin zone to exchange all points, which is necessary to travel from $A(x)$ to $A(-x)$.
  
  \smallskip
  We second prove the upper bound.
  Let $\rho$ be the radius, which we will specify later, and consider the integer lattice within the ball $B(0, \rho)$.
  Set $A = B(0, \rho) \cap \Zspace^2$ and $n = \card{A}$, and note that \eqref{eqn:points} implies $\pi [\rho - \sfrac{\sqrt{2}}{2}]^2 < n < \pi [\rho + \sfrac{\sqrt{2}}{2}]^2$.
  Write $\Brillouin{k}{A}$ for the order-$k$ Brillouin tessellation of $A$ defined in Section~\ref{sec:2}.
  For $k = 1$, this is the ordinary Voronoi tessellation with one ($2$-dimensional) region per point, and for $k \geq 2$, Corollary~\ref{cor:order-k_Brillouin_tessellation} asserts that the number of regions is less than $[6k-6]n$.
  
  For $a \in \Zspace^2$, we call a closed ball \emph{$a$-anchored} if $a$ belongs to its boundary, and we call a point, $b \in \Zspace^2$, \emph{$k$-near} to $a$ if there is an $a$-anchored ball, $B$, with $b \in \interior{B}$ and $\card{(B \cap \Zspace^2)} \leq k-1$.
  Now recall that $\Brillouin{k}{A}$ is obtained by drawing the $k$-th Brillouin zones of all points in $A$ next to each other.
  The $k$-th Brillouin zone of $a$ in $A$ is the same as in $\Zspace^2$ if all $k$-near points of $a$ in $\Zspace^2$ also belong to $A$.
  By \eqref{eqn:maxdist-integer-lattice}, the maximum distance of $a \in \Zspace^2$ from a point in its $k$-th Brillouin zone is $R_k < \sqrt{\sfrac{k}{\pi}} + \sfrac{\sqrt{2}}{2}$.
  It follows that the disk $B(a, 2 R_k)$ contains all $k$-near points of $a$.
  Hence, for all points $a \in B(0, \rho - 2R_k) \cap \Zspace^2$, the $k$-th Brillouin zone of $a$ in $A$ is the same as in $\Zspace^2$.
  Writing $n_0$ for the number of such points, we use again a volume argument to see that $n_0 > \pi [\rho - 2R_k - \sfrac{\sqrt{2}}{2}]^2$.
  The number of remaining points in $A$ is of lower order.
  To do the final counting, let $\chi_k$ be the number of chambers in the $k$-th Brillouin zone of a point in $\Zspace^2$.
  The total number of chambers we get for the $n_0$ points is less than the number of regions in $\Brillouin{k}{A}$.
  Assuming $k \geq 2$, this implies
  \begin{align}
    \chi_k  &<  [6k-6] \frac{n}{n_0}
             <  [6k-6] \frac{\pi [\rho + \sfrac{\sqrt{2}}{2}]^2}
                            {\pi [\rho - 2R_k - \sfrac{\sqrt{2}}{2}]^2} .
  \end{align}
  We can make the ratio as close to $1$ as we like by choosing $\rho$ as large as we like.
  This finally implies $\chi_k \leq 6k-6$.
\end{proof}

The upper bound on the number of chambers extends without adjustment of constants to general lattices in $\Rspace^2$.
For a lattice in $\Rspace^2$ that contains no four points on a common circle, the lower bound argument in the above proof implies a matching linear lower bound.
Contrary to lattices, a locally finite point set can have a Brillouin zone with infinitely many chambers; see \cite[Abbildung 3.4]{Voi08} for an example.

%%%%%%%%%%%%%%%%%%%%%%%%%%%%%%%%%%%%%%%%%%%%%%%
\subsection{Integer Lattices Beyond Two Dimensions}
\label{sec:5.2}
%%%%%%%%%%%%%%%%%%%%%%%%%%%%%%%%%%%%%%%%%%%%%%%

The proof of the upper bound in Theorem~\ref{thm:chambers_for_2D_integer_lattice} uses Corollary~\ref{cor:order-k_Brillouin_tessellation}, which only holds in $\Rspace^2$.
To generalize to higher dimension we need a different strategy.
Using inversion, we transform the problem of counting chambers of the $k$-th Brillouin zone to counting $k$-sets of a finite set.
We begin by proving that sets of $O(k)$ points suffice for the transformation.
Recall that a point $a \in \Zspace^d$ is \emph{$k$-near} to $0$ if there exists a $0$-anchored ball, $B$, such that $a \in \interior{B}$ and $\card{(\interior{B} \cap \Zspace^d)} \leq k-1$.
Theorem \ref{thm:width_for_integer_lattices} implies an upper bound on the number of $k$-near points.
\begin{figure}[hbt]
  \centering
    \vspace{0.0in}
    \resizebox{!}{2.8in}{\input{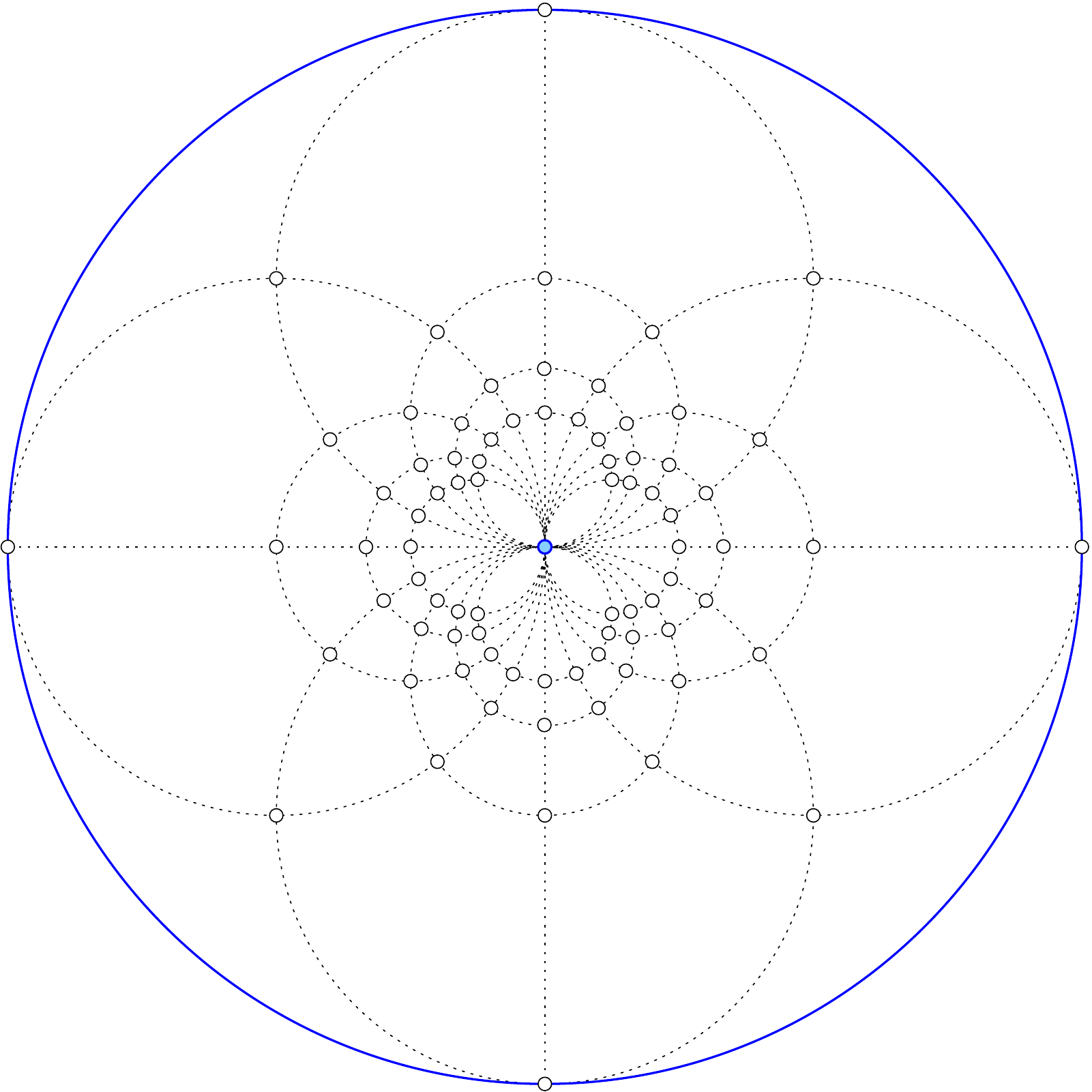_t}}
    \caption{The image of the integer lattice under inversion.
    The dotted lines and circles are the images of nine vertical and nine horizontal integer lines.
    The \emph{blue} unit circle is preserved by the inversion.
    Since all integer points other than $0$ lie on or outside the unit circle, their images under the inversion all lie on or inside the unit circle.}
  \label{fig:inverse}
\end{figure}
\begin{lemma}[\#Near Points]
  \label{lem:near_points}
  For every integer $k \geq 1$, the number of $k$-near points of $0 \in \Zspace^d$ is $O(k)$, assuming $d$ is constant.
\end{lemma}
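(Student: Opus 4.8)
The plan is to show that every $k$-near point must lie inside a ball about $0$ whose radius is only $O(\sqrt[d]{k})$, and then to bound their number by the count of integer points in that ball, using the two-sided Gauss-type estimate \eqref{eqn:points} that was already established for Theorem~\ref{thm:width_for_integer_lattices}.

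First I would unpack the witness. Let $a \in \Zspace^d$ be $k$-near to $0$, and fix a $0$-anchored ball $B$ with $a \in \interior{B}$ and at most $k-1$ integer points in $\interior{B}$. The structural fact that drives everything is that, since $0$ lies on the boundary of $B$, the radius of $B$ equals the distance from its center $c$ to $0$; that is, $B = B(c, \norm{c})$. Applying the lower bound of \eqref{eqn:points} to $B(c, \norm{c})$---whose interior holds at most $k-1$ integer points---gives $\nu_d [\norm{c} - \sfrac{\sqrt{d}}{2}]^d < k-1$, and hence $\norm{c} < \sqrt[d]{\sfrac{k}{\nu_d}} + \sfrac{\sqrt{d}}{2} =: \varrho_{\max}$; the bound $\norm{c} < \varrho_{\max}$ holds trivially in the remaining degenerate range $\norm{c} \leq \sfrac{\sqrt{d}}{2}$, where the lower bound of \eqref{eqn:points} is vacuous.

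Next I would confine $a$ itself. Because $a$ is interior to $B(c, \norm{c})$, the triangle inequality gives $\norm{a} \leq \norm{a - c} + \norm{c} < 2 \norm{c} < 2 \varrho_{\max}$, so every $k$-near point lies in $B(0, 2\varrho_{\max})$. To finish, I would apply the upper bound of \eqref{eqn:points} a second time, now centered at $0$, to see that $B(0, 2\varrho_{\max})$ contains fewer than $\nu_d [2\varrho_{\max} + \sfrac{\sqrt{d}}{2}]^d$ integer points. With $d$ fixed, $2\varrho_{\max} = 2\sqrt[d]{\sfrac{k}{\nu_d}} + \sqrt{d}$ grows like $\sqrt[d]{k}$, so raising to the $d$-th power and absorbing the constant $\nu_d$ gives a bound of order $k$; hence the number of $k$-near points is $O(k)$, as claimed.

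I do not expect a genuine obstacle: the entire content is the \emph{confinement} of the witnessing balls, after which the conclusion is just the lattice-point count already proved in \eqref{eqn:points}. The only points needing care are the identification of an anchored ball's radius with $\norm{c}$, the factor of $2$ that arises precisely because $0$ sits on the boundary rather than at the center (so $a$ can be nearly antipodal to $0$ within $B$), and the degenerate small-radius case in which the lower bound of \eqref{eqn:points} carries no information and must be handled separately.
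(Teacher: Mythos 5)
Your proof is correct and takes essentially the same route as the paper: confine the center $c$ of any witnessing $0$-anchored ball to distance less than $\sqrt[d]{\sfrac{k}{\nu_d}} + \sfrac{\sqrt{d}}{2}$ from $0$, conclude that every $k$-near point lies in the ball of twice that radius, and count integer points there by a volume argument. The only cosmetic difference is that the paper obtains the bound on $\norm{c}$ by noting that $c$ lies in one of the first $k$ Brillouin zones and citing \eqref{eqn:maxdist-integer-lattice}, whereas you rederive it directly from \eqref{eqn:points}---which is exactly how that bound was proved in Theorem~\ref{thm:width_for_integer_lattices} anyway.
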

\begin{proof}
  The center of any $0$-anchored closed ball with fewer than $k$ integer points in the interior belongs to one of the first $k$ Brillouin zones of $0$.
  By \eqref{eqn:maxdist-integer-lattice}, the distance of such a point from $0$ is at most $R_k < \sqrt[d]{\sfrac{k}{\nu_d}} + \sfrac{\sqrt{d}}{2}$.
  It follows that all $k$-near points are contained in $B(0, 2R_k)$.
  By a straightforward volume argument, the number of such points is bounded from above by the volume of $B(0, 2R_k + \sfrac{\sqrt{d}}{2})$, which is
  \begin{align}
    \nu_d \left[ 2 R_k + \sfrac{\sqrt{d}}{2} \right]^d
    &<  \nu_d \left[ 2 \sqrt[d]{\sfrac{k}{\nu_d}} + \tfrac{3}{2} \sqrt{d} \right]^d .
  \end{align}
  Since $d$ is a constant, this volume is $O(k)$, so the number of $k$-near points is $O(k)$.
\end{proof}

For the next step, we \emph{invert} $\Rspace^d$ in the unit sphere using the map $\iota \colon \Rspace^d \setminus \{0\} \to \Rspace^d$ defined by $\iota (x) = x / \norm{x}^2$; see Figure~\ref{fig:inverse}.
It preserves the unit sphere and exchanges points inside with points outside this sphere.
Importantly, it maps every $0$-anchored ball to a closed half-space that does not contain $0$.
In particular, it maps a $0$-anchored ball, $B$, with $\card{(\interior{B} \cap \Zspace^d)} = k-1$ to a half-space, $\iota (B)$, that contains $k-1$ points of $A = \iota ( \Zspace^d \setminus \{0\} )$ in its interior.
We call these points a \emph{$(k-1)$-set} of $A$.
By Lemma~\ref{lem:near_points}, there is a subset $A' \subseteq A$ of size $\card{A'} = O(k)$ such that every $(k-1)$-set of $A$ is also a $(k-1)$-set of $A'$.
This is interesting because counting $k$-sets is a much studied while poorly understood problem in discrete geometry.
Nevertheless, non-trivial bounds on the maximum number of $k$-sets are known in all finite dimensions, and the inversion together with Lemma~\ref{lem:near_points} implies similar bounds for the number of chambers of Brillouin zones.
Let $\ksets{k}{d}{n}$ be the maximum number of $k$-sets any set of $n$ points in $\Rspace^d$ can have.
\begin{theorem}[\#Chambers for Integer Lattices]
  \label{thm:chambers_for_integer_lattices}
  For every $k \geq 1$, there exists a constant, $C$, depending on $d$, such that $\ksets{k-1}{d}{C k}$ is an upper bound on the number of chambers of the $k$-th Brillouin zone of $0 \in \Zspace^d$.
\end{theorem}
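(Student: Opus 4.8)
The plan is to exhibit an injection from the chambers of $\BZone{k}{0}{\Zspace^d}$ into the $(k-1)$-sets of the finite set $A'$ constructed above, after which the bound follows immediately from the definition of $\ksets{k-1}{d}{\cdot}$. First I would fix a chamber and a point $x$ in its interior. Because moving $x$ within a single open cell of the bisector arrangement crosses no bisector, the set $S(x)$ of integer points in $\interior{B(x,\norm{x})}$ is constant over the chamber, has cardinality exactly $k-1$, and --- since no bisector passes through the interior point $x$ --- no integer point other than $0$ lies on $\partial B(x,\norm{x})$. The chamber is moreover recovered from $S(x)$: the side of each bisector on which $x$ lies is recorded exactly by whether the corresponding integer point belongs to $S(x)$. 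Hence the map sending a chamber to $S(x)$ is injective, and it remains only to count the sets $S(x)$ that arise.

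Next I would transport these sets through the inversion $\iota$. Each $S = S(x)$ is cut out of $\Zspace^d \setminus \{0\}$ by the $0$-anchored ball $B(x,\norm{x})$, so every point of $S$ is $k$-near to $0$ and therefore lies in the set $N$ of $k$-near points. As recalled above, $\iota$ carries this $0$-anchored ball to a closed half-space disjoint from $0$, sending the $k-1$ points of $S$ strictly into it and every other point of $A = \iota(\Zspace^d \setminus \{0\})$ strictly outside; thus $\iota(S)$ is a genuine $(k-1)$-set of $A$. Since $\iota(S) \subseteq \iota(N) = A'$ and the separating hyperplane keeps all of $A' \setminus \iota(S)$ on the far side, $\iota(S)$ is in particular a $(k-1)$-set of $A'$.

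Finally, distinct chambers yield distinct $S$ and hence distinct $\iota(S)$, so the number of chambers is at most the number of $(k-1)$-sets of $A'$, which is at most $\ksets{k-1}{d}{\card{A'}}$. By Lemma~\ref{lem:near_points}, $\card{A'} = \card{N} = O(k)$, so there is a constant $C$ depending only on $d$ with $\card{A'} \le Ck$; using that $\ksets{k-1}{d}{\cdot}$ is non-decreasing in its number of points then gives the claimed bound $\ksets{k-1}{d}{Ck}$.

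The step requiring the most care is keeping the correspondence exact while passing from the infinite lattice to the finite set $A'$. One must check that the image half-space contains \emph{precisely} the $k-1$ points of $\iota(S)$ and no further point of the infinite set $A$ --- so that no chamber is missed through a spurious cut --- and that restricting from $A$ to $A' = \iota(N)$ neither identifies two distinct sets $\iota(S)$ nor invalidates a $(k-1)$-set; both of these hinge on the containment $S \subseteq N$ furnished by Lemma~\ref{lem:near_points} together with the half-space avoiding $0$.
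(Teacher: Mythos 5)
Your proposal is correct and takes essentially the same route as the paper: the paper's own justification of this theorem is precisely the inversion argument you spell out---each chamber determines (injectively) the set of $k-1$ lattice points inside the $0$-anchored ball, inversion turns this into a $(k-1)$-set of $A$, and Lemma~\ref{lem:near_points} restricts everything to the set $A'$ of images of $k$-near points with $\card{A'} = O(k)$. The only cosmetic difference is your appeal to monotonicity of $\ksets{k-1}{d}{\cdot}$ in the number of points, which can be sidestepped entirely by padding $A'$ with further points of $A$ to reach exactly $Ck$ points, since each chamber's $(k-1)$-set is cut off from all of $A$ and hence from any subset of $A$ containing it.
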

At the time of writing this paper, the best known upper bounds for $\ksets{k}{d}{n}$ in which $n = O(k)$ are $\ksets{k}{2}{n} = O(n^{4/3})$ in \cite{Dey98}, $\ksets{k}{3}{n} = O(n^{5/2})$ in \cite{SST01}, and $\ksets{k}{d}{n} = O(n^{d-c_d})$ with $c_d > 0$ small and tending to $0$ as $d$ grows in \cite{ABFK92}.
The bounds in Theorem~\ref{thm:chambers_for_integer_lattices} extend to Delone sets, so it is humbling that we did not get better bounds even for the integer lattices beyond two dimensions.

%%%%%%%%%%%%%%%%%%%%%%%%%%%%%%%%%%%%%%%%%%%%%%%
\subsection{Number of Chambers Experimentally}
\label{sec:5.3}
%%%%%%%%%%%%%%%%%%%%%%%%%%%%%%%%%%%%%%%%%%%%%%%

We illustrate Theorem~\ref{thm:chambers_for_2D_integer_lattice} by showing the number of chambers of the $k$-th Brillouin zone of $0 \in \Zspace^2$ in the top panel of Figure~\ref{fig:chambers}.
As predicted, the curve stays below the straight line of the upper bound for $k\geq 2$.
By comparison, the number of chambers for a perturbation of $\Zspace^2$ goes sometimes above this bound, which is not a contradiction since perturbations are not necessarily lattices so the bound does not apply.
The bottom panel of Figure~\ref{fig:chambers} shows the cumulative number of chambers for the first $k$ Brillouin zones.
The linear upper bound in the top panel turns into a quadratic upper bound in the bottom panel.
The curve for $\Zspace^2$ stays clearly below that bound.
\begin{figure}[hbt]
  \centering
    \vspace{0.0in}
    \includegraphics[width=0.625\textwidth]{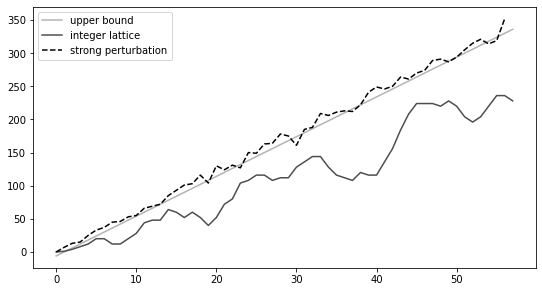} \\
    \hspace{-0.15in}
    \includegraphics[width=0.641\textwidth]{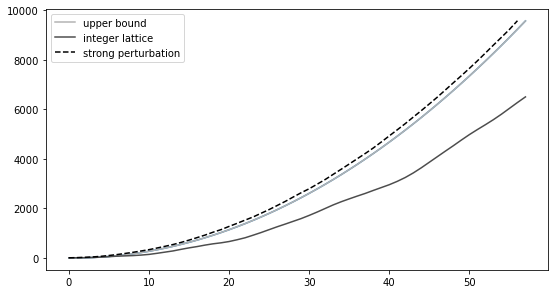}
    \caption{\emph{Top:} the number of chambers in the $k$-th  Brillouin zone of $0 \in \Zspace^2$ together with the linear upper bound and the numbers for a strong perturbation, for comparison.
    \emph{Bottom:} the cumulative number of chambers in the first $k$ Brillouin zones of $0 \in \Zspace^2$ together with the quadratic upper bound and the cumulative numbers for the strong perturbation.
    Similar to Figure~\ref{fig:width}, the curve of the strong perturbations may be inaccurate beyond $k =34$.}
  \label{fig:chambers}
\end{figure}

%%%%%%%%%%%%%%%%%%%%%%%%%%%%%%%%%%%%%%%%%%%%%%%
%%%%%%%%%%%%%%%%%%%%%%%%%%%%%%%%%%%%%%%%%%%%%%%
\section{Size of a Chamber}
\label{sec:6}
%%%%%%%%%%%%%%%%%%%%%%%%%%%%%%%%%%%%%%%%%%%%%%%
%%%%%%%%%%%%%%%%%%%%%%%%%%%%%%%%%%%%%%%%%%%%%%%

In this section, we prove bounds on the maximum diameter of a chamber in a $k$-th Brillouin zone.
Bounds on the volume and other measures follow.

%%%%%%%%%%%%%%%%%%%%%%%%%%%%%%%%%%%%%%%%%%%%%%%
\subsection{Integer Lattices}
\label{sec:6.1}
%%%%%%%%%%%%%%%%%%%%%%%%%%%%%%%%%%%%%%%%%%%%%%%

The integer lattice forces the chambers of the $k$-th Brillouin zone to shrink when $k$ increases.
To prove a quantified version of this claim, we begin with an exercise in elementary geometry.
While the application of the result from this exercise is in $d$ dimensions, it is convenient to temporarily add another dimension and embed $\Rspace^d$ in $\Rspace^{d+1}$.

\smallskip
Let $c$ be a positive constant, $R \geq c$ a radius, and $\Sigma_0$ a $d$-sphere with radius $\rho_0 = R$ in $\Rspace^{d+1}$.
We construct a $(d-1)$-sphere $\Sigma_1$ by slicing $\Sigma_0$ with a $d$-plane at distance $\rho_0 - c$ from the center of $\Sigma_0$.
It is easy to check that the radius of $\Sigma_1$ is $\rho_1 = \sqrt{2c \rho_0 - c^2}$, which is necessarily at least $c$.
We iterate and thus get a sequence of spheres $\Sigma_i$, for $0 \leq i \leq d$, in which the dimension of $\Sigma_i$ is $d-i$, and the radius of every subsequent sphere is $\rho_i = \sqrt{2c \rho_{i-1} - c^2} \geq c$.
Writing the radii in terms of $R$, we get
\begin{align}
  \rho_1  &=  \sqrt{2cR-c^2}  ~~~~~~~~~~~~<  (2c)^{1/2} R^{1/2} , 
    \label{eqn:r2} \\
  \rho_2  &=  \sqrt{2c \sqrt{2cR-c^2} - c^2} <  (2c)^{3/4} R^{1/4} ,
    \label{eqn:r3}
\end{align}
and more generally $\rho_i \leq (2c)^{1-1/2^{i}} R^{1/2^{i}}$ for $0 \leq i \leq d$.
Write $z_i$ for the center of $\Sigma_i$ and observe that its distance to the closest point on $\Sigma_0$ is
\begin{align}
  \delta_i  &= R - \Edist{z_i}{z_0} = R - \sqrt{R^2 - \rho_{i}^2}
        =  R - \sqrt{\left(R - \frac{\rho_{i}^2}{2R}\right)^2 - \frac{\rho_{i}^4}{4 R^2}} \\
       &\leq  R - \left( R - \frac{\rho_i^2}{2R} \right) + \frac{\rho_i^2}{2R}
        \leq  \frac{\rho_i^2}{R}
       \leq  \frac{4c^2}{R} \left( \frac{R}{2c} \right)^{1/2^{i-1}}
        =  2c \left( \frac{2c}{R} \right)^{1-1/2^{i-1}} .
\end{align}
Observe that all $d+1$ centers lie in a $d$-plane that intersects $\Sigma_0$ in a $(d-1)$-sphere of radius $R$, which we denote $\Sigma_0'$.
It follows that $\delta_i$ is also the distance of $z_i$ to the closest point of $\Sigma_0'$, for $0 \leq i \leq d$.
Of particular interest is the last center, $z_d$, and its distance to $\Sigma_0'$, which is $\delta_d$.
The $d$-plane that contains $\Sigma_0'$ is where we apply the insights from the exercise.

\smallskip %TH: this was just smallskip without a back-slash!
We claim that in $\Rspace^d$, there are necessarily many integer points at distance at most $\delta_d$ from $\Sigma_0$.
To formulate this claim, we call a point on a $(d-1)$-sphere in $\Rspace^d$ a \emph{pole} if there is an index $1 \leq i \leq d$ such that the point has either minimum or maximum $i$-th coordinate among all points of the sphere.
A $(d-1)$-sphere has $2d$ poles.
For the next definition, we let $\Rspace^d$ be the $d$-plane spanned by the first $d$ coordinate vectors of $\Rspace^{d+1}$.
An \emph{integer $(i+1)$-plane} is an axis-parallel $(i+1)$-dimensional plane in $\Rspace^{d+1}$ normal to $\Rspace^d$ passing through at least one integer point.
It is normal to $d-i$ of the first $d$ coordinate axes and determined by the corresponding $d-i$ values, which are integers.

The largest open ball in $\Rspace^d$ that contains no point in $\Zspace^d$ has radius $\sfrac{\sqrt{d}}{2}$.
This implies that for a $(d-1)$-sphere with radius $R \geq \sqrt{d}$ in $\Rspace^d$, and for any coordinate axis, there exists an integer $d$-plane orthogonal to that axis at distance at least $R - \sqrt{d}$ from the center that contains at least one point of $\Zspace^d$ on or inside the sphere.
This motivates us to set $c = \sqrt{d}$.
\begin{lemma}[Nearby Integer Points]
  \label{lem:nearby_integer_points}
  For every $(d-1)$-sphere with radius $R \geq \sqrt{d}$ in $\Rspace^d$, and every one of its poles, there is an integer point at distance at most $R$ to the center, at most $\sqrt{2\sqrt{d}R}$ to the pole, and at most %TH: Note, here we will have to introduce a factor of 2:
  $2\sqrt{d} (2\sqrt{d}/R)^{1 - 1/2^{d-1}}$ to the closest point on the $(d-1)$-sphere.
\end{lemma}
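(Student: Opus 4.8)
The plan is to realize the geometric construction of the preceding exercise by integer hyperplanes, arranged so that the final center of the nested family of spheres lands exactly on a lattice point. First I would normalize: by reflecting and permuting coordinates, assume the chosen pole is $P = z + R e_1$, where $z$ is the center of the given $(d-1)$-sphere $\Sigma$. Following the exercise, set $c = \sqrt d$ and embed $\Rspace^d = \{x_{d+1} = 0\}$ into $\Rspace^{d+1}$, viewing $\Sigma$ as the equator $\Sigma_0'$ of the $d$-sphere $\Sigma_0$ of radius $R$ centered at $z$. I would then construct integers $m_1, \ldots, m_d$ one at a time, where $m_j$ fixes the value of the $j$-th coordinate, so that $\Sigma_j = \Sigma_0 \cap \{x_1 = m_1, \ldots, x_j = m_j\}$ is precisely the $j$-th sphere of the exercise, with center $z^{(j)} = (m_1, \ldots, m_j, \zeta_{j+1}, \ldots, \zeta_d, 0)$ and radius $\rho_j$ obeying $\rho_j \le \sqrt{2c\rho_{j-1} - c^2}$. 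The key structural point is that after $d$ slices all coordinates are fixed to integers, so $z_d = z^{(d)} = (m_1, \ldots, m_d, 0)$ is a genuine point of $\Zspace^d$; this is the integer point the lemma asks for.

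Granting that the slicing can be carried out, the three bounds fall out of the exercise. Since $z_d$ is the orthogonal projection of $z$ onto the final flat, $\Edist{z_d}{z} = \sqrt{R^2 - \rho_d^2} = R - \delta_d \le R$, which is the bound to the center. Because $z_d$ lies in $\{x_{d+1} = 0\}$ together with $\Sigma_0'$, its distance to the nearest point of $\Sigma = \Sigma_0'$ equals $\delta_d$, and the exercise's estimate $\delta_d \le 2c(2c/R)^{1 - 1/2^{d-1}}$ is exactly the third claimed bound. For the distance to the pole, I would decompose $z_d - P$ along $e_1$ and its orthocomplement: writing $t = R - (m_1 - \zeta_1)$, one has $P - z^{(1)} = t\, e_1$ while $z_d - z^{(1)} \perp e_1$ with $\|z_d - z^{(1)}\| \le \rho_1$, hence $\|z_d - P\|^2 = t^2 + \|z_d - z^{(1)}\|^2 \le t^2 + \rho_1^2 = t^2 + (R^2 - (R - t)^2) = 2Rt$. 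Since the first slice is taken toward $P$ at distance at least $R - c$, we have $t \le c = \sqrt d$, so $\|z_d - P\| \le \sqrt{2\sqrt d R}$, the second claimed bound.

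The main obstacle is the existence and compatibility of the integer slices, i.e.\ showing that at each step $j$ an admissible integer $m_j$ exists. For this I would use the covering radius $\sqrt d / 2$ of the integer lattice (the largest empty open ball has radius $\sqrt d/2$), which also bounds the covering radius of every relevant coordinate sublattice. The value $c = \sqrt d$ is chosen as twice this covering radius precisely so that, when $\rho_{j-1}$ is not too small, the interval of admissible slicing distances---those at least $\rho_{j-1} - c$ from $z^{(j-1)}$ but still cutting $\Sigma_{j-1}$---has length $c = \sqrt d \ge 1$ and therefore contains an integer, while for small $\rho_{j-1}$ one instead rounds to the nearest integer so as not to overshoot the sphere. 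The delicate part, which I expect to demand careful bookkeeping rather than one clean inequality, is to verify that this choice simultaneously forces enough shrinkage to keep the upper bounds $\rho_j \le \sqrt{2c\rho_{j-1} - c^2}$ (needed for the $\delta_d$ estimate) and leaves each $\rho_j$ large enough to admit the next integer hyperplane, all the way down to dimension $0$. Controlling these competing constraints against the integer rounding error is where essentially all the work lies; the distance computations above are then immediate.
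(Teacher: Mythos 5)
Your overall strategy is the same as the paper's: slice the $d$-sphere by integer hyperplanes with $c=\sqrt{d}$, observe that after $d$ slices the final center $(m_1,\ldots,m_d,0)$ is a lattice point, and read the three bounds off the preceding exercise. Your three distance derivations are correct (your pole bound $\|z_d-P\|^2\le 2Rt\le 2\sqrt{d}R$ is in fact a slightly cleaner version of the paper's $\sqrt{r_1^2+d}$ computation). However, the step you yourself flag as ``where essentially all the work lies''---the existence of an admissible integer slice at every stage---is genuinely incomplete, and the two mechanisms you propose do not close it. If you take an \emph{arbitrary} integer in the admissible interval of slicing distances $[r_{j-1}-c,\,r_{j-1}]$, you may slice arbitrarily close to tangency, collapsing the radius $r_j$ toward $0$ (a tangent slice leaves a single point whose remaining coordinates need not be integers, and the construction dies). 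Once some $r_j<\tfrac12$, both of your devices fail: the set of distances that simultaneously give enough shrinkage and still meet the ball is $[\max(r_j-c,-r_j),\,r_j]$, of length $\min(c,2r_j)<1$, so it need not contain an integer; and rounding the center coordinate to the nearest integer can overshoot, since that integer may lie at distance up to $\tfrac12>r_j$. So your argument needs an additional invariant lower-bounding the intermediate radii (for instance, always taking the \emph{deepest} admissible integer keeps $r_j^2>d-1$ while $r_{j-1}\ge\sqrt d$, after which nearest-integer rounding loses at most $\tfrac14$ in $r^2$ per step), and you neither state nor prove any such invariant.

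The paper sidesteps this bookkeeping with a different invariant: each sliced ball must \emph{contain a lattice point}. Concretely, $h_j$ is chosen as the integer hyperplane with maximal $j$-th coordinate \emph{among those meeting $B_{j-1}\cap\Zspace^d$}. Existence is then automatic at every step (slice through the coordinate of any lattice point in the current ball), the invariant is trivially preserved, and the final one-dimensional ball $B_d$ contains a lattice point, which is forced to be its center. The shrinkage bound $r_j\le\rho_j$ then follows by two cases: if $r_{j-1}\ge\sqrt d$, the covering-radius argument (a ball of radius $\sqrt d/2$ placed just inside the current sphere toward the pole contains a lattice point) shows the maximal such hyperplane lies at distance at least $r_{j-1}-\sqrt d$ from the center; if $r_{j-1}<\sqrt d$, no distance estimate is needed at all, since $r_j\le r_{j-1}<\sqrt d\le\rho_j$. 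This is the idea missing from your proposal: slice through lattice points of the ball rather than merely through integer coordinate values, so that the small-radius regime---the source of all your competing constraints---becomes vacuous.
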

\begin{proof}
  Let $S_0$ be a $d$-sphere with center in $\Rspace^d$ and radius $R \geq \sqrt{d}$ in $\Rspace^{d+1}$, and write $B_0$ for the closed $(d+1)$-ball whose boundary is $S_0$.
  Let $S_0' = S_0 \cap \Rspace^d$, which is a $(d-1)$-sphere of radius $R$ and thus encloses or passes through at least one point in $\Zspace^d \subseteq \Rspace^d$.
  Because of symmetry, it is sufficient to construct a point $p \in \Zspace^d$ that satisfies the conditions of the lemma for one given pole of $S_0'$.
  Let this pole be the point with maximum first coordinate among all points of $S_0'$.
  Let $h_1$ be the integer $d$-plane orthogonal to the first coordinate axis, with maximum first coordinate such that $B_0 \cap h_1 \cap \Zspace^d \neq \emptyset$.
  Let $S_1 = S_0 \cap h_1$ be the corresponding $(d-1)$-sphere, and note that its radius is $r_1 \leq \rho_1$, in which $\rho_1$ is defined with $c = \sqrt{d}$ in \eqref{eqn:r2}.
  Write $B_1 = B_0 \cap h_1$ and iterate to get a sequence of $d+1$ spheres, as before.
  By construction, the radii of these spheres satisfy $r_i \leq \rho_i$, and their centers have distance $d_i \leq \delta_i$ to their closest points on $S_0'$.
  Also by construction, each $B_i$ contains at least one point in $\Zspace^d$, and since its center maximizes the distance to $S_0'$, the distance of this integer point to the closest point on $S_0'$ is at most $d_i \leq \delta_i$.
  The case $i=d$ proves the claimed bounds on the distance to $S_0'$ and to the center of $S_0'$.
  
  \smallskip
  To also bound the distance to the pole, observe that $p \in B_1$, which is a $d$-ball with radius $r_1 \leq \rho_1 = \sqrt{2\sqrt{d}R-d}$.
  The distance of the pole to $z_1$, the center of the $d$-ball, is at most $\sqrt{d}$, so the distance to $p$ is at most $\sqrt{r_1^2 + d} \leq \sqrt{2\sqrt{d}R}$.
\end{proof}

Let now $S(x,r)$ and $S(y,R)$ be two $(d-1)$-spheres with centers $x, y \in \Rspace^d$ and radii $r \leq R$.
Assuming $x \neq y$, an entire hemisphere of $S(y,R)$ lies outside the other sphere, namely the hemisphere of points $a \in S(y,R)$ that satisfy $\scalprod{a-y}{y-x} \geq 0$.
We prove that this hemisphere contains a large cap within which all points have distance at least some constant times $\Edist{x}{y}$ to the closest point of $S(x,r)$.
Specifically, if $a \in S(y,R)$ is a point in this hemisphere and $\theta \leq {\pi}/{2}$ is the angle between $a$ and the central point, $z$, of the hemisphere, as in Figure~\ref{fig:cone}, then
\begin{align}
  \Edist{a}{x}^2  &=  R^2 \sin^2 \theta + (R \cos \theta + \Edist{x}{y} )^2 \\
  &=  R^2 + 2R \Edist{x}{y} \cos \theta + \Edist{x}{y}^2
  \geq  (R + \Edist{x}{y} \cos \theta)^2 .
\end{align}
The distance of $a$ to the closest point of $S(x,r)$ is therefore
$\Edist{a}{x} - r  \geq  \Edist{a}{x} - R  \geq  \Edist{x}{y} \cos \theta$.
In words, for every constant angle $\theta < \frac{\pi}{2}$, the distance between $a$ and the closest point of $S(x,r)$ is at least some constant fraction of the distance between the centers.
\begin{figure}[hbt]
  \centering
  \vspace{0.1in}
  \resizebox{!}{2.2in}{\input{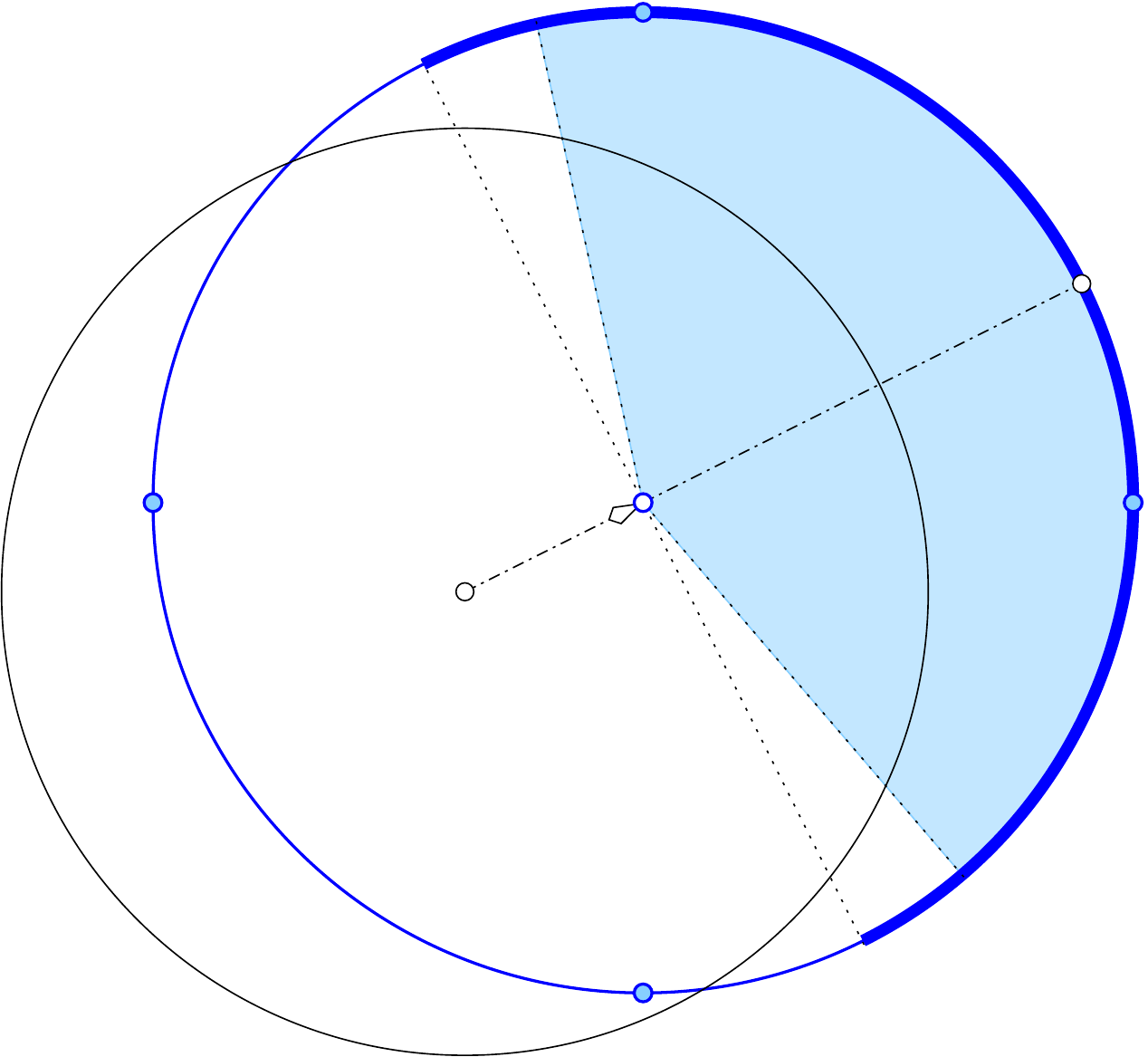_t}}
  \caption{By assumption, $S(y,R)$ is no smaller than $S(x,r)$.
  The half-circle of points $a \in S(y,R)$ with $\scalprod{a-y}{y-x} \geq 0$ is highlighted.
  The cone with angle $\theta_0 = \arccos \sfrac{\sqrt{2}}{6}$ is guaranteed to contain at least one of the four poles of $S(y,R)$ together with the nearby integer point.}
  \label{fig:cone}
\end{figure}

In the following, we will consider cones, for which we introduce the following nomenclature.
By a \emph{cone} with \emph{apex} $y \in \Rspace^d$, \emph{direction} $z-y \in \Rspace^d \setminus \{0\}$, and \emph{angle} $\theta_0 < \frac{\pi}{2}$, we mean the set of points $v \in \Rspace^d$ such that the angle between $v-y$ and $z-y$ is at most $\theta_0$; see Figure~\ref{fig:cone}.
We will need $\theta_0$ such that at least one of the integer points that satisfy the conditions of Lemma~\ref{lem:nearby_integer_points} is contained in this cone.
For every direction $z-y$, there is a pole $b$ of $S(y,R)$ with $\scalprod{z-y}{b-y} \geq R^2 / \sqrt{d}$,
and this is best possible because $\scalprod{z-y}{b-y} \leq R^2 / \sqrt{d}$ for all poles if $z-y$ is the diagonal direction of any of the $2^d$ orthants.
Thus any cone with angle at least $\arccos \sfrac{1}{\sqrt{d}}$ contains at least one pole.
We choose the threshold a bit larger so that any such cone also contains the nearby integer point.
\begin{lemma}[Poles and Cones]
  \label{lem:poles_and_cones}
  Let $\Sigma$ be a sphere with radius $R \geq (18 d - \frac{1}{2}) \sqrt{d}$.
  Then every cone with apex at the center of $\Sigma$ and angle $\theta_0 = \arccos \sfrac{1}{(3 \sqrt{d})}$ contains at least one pole of $\Sigma$ together with the nearby integer point as constructed in the proof of Lemma~\ref{lem:nearby_integer_points}.
\end{lemma}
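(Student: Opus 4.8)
The plan is to reduce the statement to the angular estimate sketched just before the lemma, and then to show that the integer point supplied by Lemma~\ref{lem:nearby_integer_points} cannot drift out of the cone. Fix a cone with apex at the center $y$ of $\Sigma$ and direction $z-y$, and set $u = (z-y)/\norm{z-y} \in \Sspace^{d-1}$, so that membership in the cone amounts to the angle between $u$ and $v-y$ being at most $\theta_0$. First I would choose the pole explicitly: letting $i^\ast$ be an index with $|u_{i^\ast}| = \max_i |u_i|$, the relation $\sum_i u_i^2 = 1$ forces $|u_{i^\ast}| \geq 1/\sqrt{d}$, and the pole $b$ with $b-y = \operatorname{sign}(u_{i^\ast}) R e_{i^\ast}$ (where $e_{i^\ast}$ is the $i^\ast$-th coordinate vector) then satisfies $\scalprod{u}{b-y}/R = |u_{i^\ast}| \geq 1/\sqrt{d}$. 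Since $1/\sqrt{d} > \sfrac{1}{(3\sqrt{d})} = \cos \theta_0$, this $b$ already lies strictly inside the cone, recovering the pre-lemma claim that every cone contains a pole.

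The substance is to carry the associated integer point into the (slightly wider) cone. Applying Lemma~\ref{lem:nearby_integer_points} to the pole $b$ produces an integer point $p$ with $\Edist{p}{b} \leq \sqrt{2\sqrt{d}R}$ and $\Edist{p}{y} \leq R$. I would estimate the cosine of the angle between $u$ and $p-y$ from below by splitting the inner product along $b$:
\[
  \scalprod{u}{p-y} = \scalprod{u}{b-y} + \scalprod{u}{p-b} \geq \tfrac{R}{\sqrt{d}} - \Edist{p}{b} \geq \tfrac{R}{\sqrt{d}} - \sqrt{2\sqrt{d}R}.
\]
Combining this with the bound $\Edist{p}{y} \leq R$ on the denominator gives $\cos \angle(u, p-y) \geq \sfrac{1}{\sqrt{d}} - \sqrt{\sfrac{2\sqrt{d}}{R}}$, which is a genuine lower bound once the right-hand side is nonnegative.

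Finally I would verify that this lower bound is at least $\cos \theta_0 = \sfrac{1}{(3\sqrt{d})}$. This is equivalent to $\sqrt{\sfrac{2\sqrt{d}}{R}} \leq \sfrac{2}{(3\sqrt{d})}$, i.e.\ to a lower bound on $R$ of order $d^{3/2}$, which the hypothesis $R \geq (18 d - \tfrac{1}{2})\sqrt{d}$ supplies with room to spare; the same hypothesis also secures $R \geq \sqrt{d}$, as required to invoke Lemma~\ref{lem:nearby_integer_points}, and makes the numerator above positive. Hence $p$ lies in the cone alongside $b$, which proves the lemma.

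I expect the only delicate point to be the angular bookkeeping in the second paragraph: the pole is guaranteed only to within half-angle $\arccos (1/\sqrt{d})$, while the cone has the larger half-angle $\theta_0 = \arccos (1/(3\sqrt{d}))$, and the whole argument hinges on this angular slack being wide enough to absorb the $O(1/\sqrt{R})$ tangential drift of $p$ away from its pole. Passing from the Euclidean proximity $\Edist{p}{b} \leq \sqrt{2\sqrt{d}R}$ to an honest lower bound on $\scalprod{u}{p-y}/\Edist{p}{y}$---rather than a small-angle approximation---is exactly what pins down the explicit threshold on $R$.
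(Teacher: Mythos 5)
Your proof is correct, but it follows a genuinely different route from the paper's. The paper also begins by placing a pole $b$ inside the narrower cone of angle $\varphi = \arccos(1/\sqrt{d})$, but from there it argues about the geometry of the construction inside the proof of Lemma~\ref{lem:nearby_integer_points}: it slices $\Sigma$ with the plane of points $a$ satisfying $\scalprod{a-y}{b-y} = R(R-\sqrt{d})$ to obtain a $(d-2)$-sphere $\Sigma_2$ of radius $\rho_2 = \sqrt{2\sqrt{d}R-d}$, and then shows---by bounding from below the squared Hausdorff distance between the two circles where the $\varphi$-cone and the $\theta_0$-cone meet $\Sigma$ by $4R^2/(9d)$, comparing this against $4\rho_2^2$, and invoking the triangle inequality on the unit sphere---that the $\theta_0$-cone contains all of $\Sigma_2$, hence the cap in which the integer point was constructed. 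You never look inside that construction: you use only the two distance bounds stated in Lemma~\ref{lem:nearby_integer_points}, namely $\Edist{p}{y}\le R$ and $\Edist{p}{b}\le\sqrt{2\sqrt{d}R}$, and bound the cosine directly, writing $\scalprod{u}{p-y}=\scalprod{u}{b-y}+\scalprod{u}{p-b}\ge R/\sqrt{d}-\sqrt{2\sqrt{d}R}$ by Cauchy--Schwarz and dividing by $\norm{p-y}\le R$ (legitimate because the numerator is positive under the radius hypothesis, which you checked). Your version is shorter, more modular (it depends on the statement, not the proof, of Lemma~\ref{lem:nearby_integer_points}), and in fact needs only $R\ge\tfrac{9}{2}d^{3/2}$, which is weaker than the stated threshold $(18d-\tfrac{1}{2})\sqrt{d}$; the paper's threshold is calibrated exactly so that $R^2/(9d)\ge 2\sqrt{d}R-d$, which its Hausdorff computation requires. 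What the paper's argument buys in exchange is a slightly stronger geometric conclusion: the $\theta_0$-cone is shown to swallow the entire slice $\Sigma_2$ (and with it the whole region of the earlier construction containing the integer point), not just one point; for the lemma as stated, your argument fully suffices.
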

\begin{proof}
  Write $y$ for the center of $\Sigma$ and let $z$ be an arbitrary point on the sphere. Before showing the claim for the cone with apex $y$, direction $z-y$ and angle $\theta_0$ (below denoted by $\theta_0$-cone), we consider a cone with the same apex and direction but a smaller angle $\varphi$ (below denoted by $\varphi$-cone):
  As mentioned above, every cone with angle $\varphi = \arccos \sfrac{1}{\sqrt{d}}$ contains at least one pole, $b$.
  Let $\Sigma_2$ be the $(d-2)$-sphere obtained by slicing $\Sigma$ with the $(d-1)$-plane of points $a$ that satisfy $\scalprod{a-y}{b-y} = R (R-\sqrt{d})$.
  The radius of $\Sigma_2$ is $\rho_2 = \sqrt{2 \sqrt{d} R - d}$.
  We need to show that $\theta_0 - \varphi$ is large enough for the $\theta_0$-cone to not only contain $b$, but also $\Sigma_2$, and thus also the integer point nearby.
  Using the triangle inequality on the unit sphere, we observe that it suffices to show that the angle of the cone defined by $\Sigma_2$ is less than $\theta_0 - \varphi$.
  We begin by computing the squared (Hausdorff) distance between the $(d-2)$-spheres at which the boundaries of the cones with angles $\varphi$ and $\theta_0$ (in the same direction) intersect $\Sigma$.
  By intersecting with a plane that passes through $z-y$, we see that this is the same as the squared distance 
  between the points $( R \cos \varphi, R \sin \varphi)$ and $(R \cos \theta_0, R \sin \theta_0)$, which is
  \begin{align}
    \!\!R^2 (\cos \varphi \!-\! \cos \theta_0)^2 \!+\! R^2 (\sin \theta_0 \!-\! \sin \varphi)^2
    &\geq  R^2 (\cos \varphi \!-\! \cos \theta_0)^2
     \geq  \left( \tfrac{R}{\sqrt{d}} \!-\! \tfrac{R}{3 \sqrt{d}} \right)^2  =  \tfrac{4 R^2}{9d} .
  \end{align}
  With the assumed lower bound on $R$, this is at least $4$ times the squared radius of $\Sigma_2$, which is $\rho_2^2 = 2 \sqrt{d} R - d$.
  Hence $\rho_2$ is at most half the (Hausdorff) distance between the two $(d-2)$-spheres, which implies that the cone with angle $\theta_0$ contains $\Sigma_2$.
\end{proof}
We are now ready to prove the main result of this subsection.
\begin{theorem}[Size for Integer Lattices]
  \label{thm:size_for_integer_lattices}
  For every $d \geq 2$, every chamber in the $k$-th Brillouin zone of $0 \in \Zspace^d$ has diameter at most $O( k^{(-1 + 1/ 2^{d-1})/d} )$.
  Specifically, for $k \geq \nu_d (18 d \sqrt{d})^d$, the diameter is bounded from above by $(18 d \sqrt{d}) \cdot {\nu_d}^{1/d} \cdot k^{(-1 + 1/2^{d-1})/d}$.
\end{theorem}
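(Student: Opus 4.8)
The plan is a contradiction argument on the defining property of a chamber. Let $x,y$ lie in the interior of one chamber of $\BZone{k}{0}{\Zspace^d}$ and, relabeling if needed, set $R=\norm{y}\geq\norm{x}=r$, so that $S(x,r)$ and $S(y,R)$ are precisely the two spheres of the cone estimate preceding Lemma~\ref{lem:poles_and_cones}. Both spheres pass through $0$, and by definition of a chamber the open balls $B(x,r)$ and $B(y,R)$ contain the same set $A$ of $k-1$ integer points other than $0$. I aim to show that if $\Edist{x}{y}$ is large, then I can produce an integer point that lies in one of these balls but not the other, contradicting that both ball-interiors meet $\Zspace^d\setminus\{0\}$ in the same set $A$.

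First I would exhibit an integer point on the far side of $S(y,R)$. Apply Lemma~\ref{lem:poles_and_cones} to $\Sigma=S(y,R)$ with the cone of apex $y$, axis direction $y-x$, and angle $\theta_0=\arccos\sfrac{1}{(3\sqrt{d})}$. Its hypothesis $R\geq(18d-\sfrac{1}{2})\sqrt{d}$ holds because $k\geq\nu_d(18d\sqrt{d})^d$ forces $\sqrt[d]{\sfrac{k}{\nu_d}}\geq18d\sqrt{d}$, whence $R\geq r_k>\sqrt[d]{\sfrac{k}{\nu_d}}-\sfrac{\sqrt{d}}{2}\geq(18d-\sfrac{1}{2})\sqrt{d}$ by Theorem~\ref{thm:width_for_integer_lattices}. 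The lemma supplies a pole of $S(y,R)$ inside the cone together with a nearby integer point $p$, which by Lemma~\ref{lem:nearby_integer_points} satisfies $p\in\overline{B(y,R)}$ with distance at most $\delta=2\sqrt{d}\,(2\sqrt{d}/R)^{1-1/2^{d-1}}$ to $S(y,R)$; thus $\Edist{p}{y}\geq R-\delta$. Writing $\phi$ for the angle between $p-y$ and the axis $y-x$, membership in the cone gives $\phi\leq\theta_0$, so $\cos\phi\geq\sfrac{1}{(3\sqrt{d})}>0$; since $0$ lies in the opposite hemisphere, $p\neq0$.

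The crux is then to push $p$ strictly outside $B(x,r)$. Decomposing $p-x=(p-y)+(y-x)$,
\begin{align}
  \Edist{p}{x}^2 &= \Edist{p}{y}^2 + \Edist{x}{y}^2 + 2\Edist{p}{y}\Edist{x}{y}\cos\phi \geq \left(\Edist{p}{y}+\tfrac{\Edist{x}{y}}{3\sqrt{d}}\right)^2 ,
\end{align}
so $\Edist{p}{x}\geq(R-\delta)+\Edist{x}{y}/(3\sqrt{d})$. If $\Edist{x}{y}>3\sqrt{d}\,\delta$, this gives $\Edist{p}{x}>R\geq r$, so $p$ lies strictly outside $\overline{B(x,r)}$ and hence $p\notin A$. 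On the other hand $p\in\overline{B(y,R)}$, and choosing $y$ generically within the open chamber, so that no integer point lies exactly on $S(y,R)$, makes this strict, i.e.\ $p\in A$. This contradiction forces $\Edist{x}{y}\leq3\sqrt{d}\,\delta=6d\,(2\sqrt{d}/R)^{1-1/2^{d-1}}$ for every such pair; since $R\geq r_k$ and the right-hand side decreases in $R$, the diameter of the chamber is at most $6d\,(2\sqrt{d}/r_k)^{1-1/2^{d-1}}$ after passing to the closure.

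Finally I would extract the asymptotics. Because $r_k>\sqrt[d]{\sfrac{k}{\nu_d}}-\sfrac{\sqrt{d}}{2}$ and $\sqrt[d]{\sfrac{k}{\nu_d}}\geq18d\sqrt{d}$, the error term $\sfrac{\sqrt{d}}{2}$ is a negligible fraction of $\sqrt[d]{\sfrac{k}{\nu_d}}$, so $6d\,(2\sqrt{d}/r_k)^{1-1/2^{d-1}}=O(k^{(-1+1/2^{d-1})/d})$ with the constant depending only on $d$. Tracking the constants, using $(2\sqrt{d})^{s}\leq2\sqrt{d}$ for $s=1-1/2^{d-1}\le1$ and the fact that $\nu_d^{(1-s)/d}$ stays bounded away from zero, yields the explicit bound $(18d\sqrt{d})\,\nu_d^{1/d}\,k^{(-1+1/2^{d-1})/d}$. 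The main obstacle I expect is exactly this constant bookkeeping across all dimensions, together with the care needed to exclude the degenerate case in which $p$ falls exactly on $S(y,R)$.
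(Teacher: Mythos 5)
Your proof is correct and takes essentially the same route as the paper's: the same reduction to Lemma~\ref{lem:nearby_integer_points} and Lemma~\ref{lem:poles_and_cones}, the same cone with apex $y$, direction $y-x$, and angle $\theta_0 = \arccos\sfrac{1}{(3\sqrt{d})}$, and the same conclusion that the nearby integer point lies in $\overline{B(y,\norm{y})}$ but strictly outside $B(x,\norm{x})$, forcing $x$ and $y$ into different chambers. The only cosmetic differences are that you apply the law-of-cosines estimate directly to the integer point $p$ (the paper estimates distances from points of $S(y,R)$ to $S(x,r)$ and then transfers to the nearby integer point), and that your ``generic choice'' of $y$ is in fact automatic, since an interior point of a chamber lies on no bisector, so no integer point other than $0$ can sit exactly on $S(y,\norm{y})$.
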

\begin{proof}
  Let $x$ and $y$ be points in the $k$-th Brillouin zone of $0$, and let $S(x,r)$ and $S(y,R)$ be the spheres with centers $x, y \in \Rspace^d$ and radii $r = \norm{x}$ and $R = \norm{y}$.
  Assuming the distance between $x$ and $y$ exceeds the claimed upper bound on the diameter, we prove that $x$ and $y$ belong to different chambers by showing that $S(x,r)$ and $S(y,R)$ enclose different sets of integer points.
  We get $r, R > \sqrt[d]{\sfrac{k}{\nu_d}} - \sfrac{\sqrt{d}}{2}$ from Theorem~\ref{thm:width_for_integer_lattices}.
  By the assumed lower bound on $k$, we have $\sqrt[d]{\sfrac{k}{\nu_d}} \geq 18 d \sqrt{d}$ and therefore $R \geq 18 d \sqrt{d} - \frac{1}{2} \sqrt{d}$, so we can apply Lemma~\ref{lem:poles_and_cones}.
  Furthermore, $R > \frac{2}{3} \sqrt[d]{\sfrac{k}{\nu_d}}$, although the constant could be improved.
  
  \smallskip
  Assuming $r \leq R$, the hemisphere of points $a \in S(y,R)$ with $\scalprod{a-y}{y-x} \geq 0$ lies outside $S(x,r)$.
  By Lemma~\ref{lem:nearby_integer_points}, there are integer points enclosed by $S(y,R)$ whose distance from the sphere is at most
  \begin{align}
    2 \sqrt{d} \left( \frac{2 \sqrt{d}}{R} \right)^{1-\frac{1}{2^{d-1}}}
    &<  2 \sqrt{d} \left( \frac{3 \sqrt{d} \sqrt[d]{\nu_d}}{\sqrt[d]{k}} \right)^{1-\frac{1}{2^{d-1}}}
    <  \frac{D_k}{3 \sqrt{d}} ,
    \label{eqn:distancebound}
  \end{align}
  in which $D_k = (18 d \sqrt{d}) \cdot {\nu_d}^{1/d} \cdot k^{(-1 + 1/ 2^{d-1})/d}$.
  Note that $\Edist{x}{y} > D_k$, by assumption.
  The extra factor is $\sfrac{1}{(3 \sqrt{d})} = \cos \theta_0$, so there exists an integer point on or enclosed by $S(y,R)$ that lies outside $S(x,r)$.
  Hence $x$ and $y$ lie in different chambers, which implies the claimed upper bound on the diameter.
\end{proof}

A chamber with diameter $D$ can be enclosed in a cube with edges of length $D$.
This implies that the volume of the chamber is bounded by $D^d$.
Theorem~\ref{thm:size_for_integer_lattices} thus implies that the volume of a chamber in the $k$-th Brillouin zone of $0 \in \Zspace^d$ is at most $O(k^{-1 + 1/2^{d-1}})$.

\smallskip
We can also get lower bounds for the maximum diameter and volume.
For example, $\BZone{k}{0}{\Zspace^2}$ has unit area and by Theorem~\ref{thm:chambers_for_2D_integer_lattice} consists of $O(k)$ chambers.
It follows that the average and therefore also the maximum area of a chamber in this zone is $\Omega (k^{-1})$.
By Lemma~\ref{lem:thickened_sphere} and Theorem~\ref{thm:width_for_integer_lattices}, the $k$-th Brillouin zone surrounds $0$ at a distance about $\sqrt{\sfrac{k}{\pi}}$ from $0$.
The sum of diameters is therefore at least $\Omega (k^{\sfrac{1}{2}})$, which implies that the average and therefore the maximum diameter of a chamber is $\Omega (k^{\sfrac{-1}{2}})$.
Using Theorem~\ref{thm:chambers_for_integer_lattices}, similar but weaker lower bounds can be obtained for integer lattices in $d \geq 3$ dimensions.

\smallskip
The authors of this paper believe that the bound in Theorem~\ref{thm:size_for_integer_lattices} extends to lattices, but it does not extend to Delone sets.
Indeed we will see shortly that the bounds break down even for perturbations of the integer lattice.

%%%%%%%%%%%%%%%%%%%%%%%%%%%%%%%%%%%%%%%%%%%%%%%
\subsection{Perturbed Integer Lattices}
\label{sec:6.2}
%%%%%%%%%%%%%%%%%%%%%%%%%%%%%%%%%%%%%%%%%%%%%%%

As proved above, %As proved in Theorem~\ref{thm:size_for_integer_lattices},
the size of the largest chamber in the $k$-th Brillouin tessellation of the integer lattice approaches zero as $k$ goes to infinity.
This property is not necessarily shared by arbitrarily small perturbations of $\Zspace^d$. 

\begin{theorem}[Size for Perturbed Integer Lattices]
  \label{thm:size_for_perturbed_integer_lattices}
  For every $\tau < \frac{1}{2}$, there exists a perturbation $\varphi \colon \Zspace^d \to \Rspace^d$ with magnitude at most $\tau$ such that for every $k\geq 1$, there exists a point $b_k \in \varphi(\Zspace^d)$ such that its $k$-th Brillouin zone contains a chamber with diameter at least $\tau$ and volume at least $\nu_d [\frac{\tau}{2}]^d$.
\end{theorem}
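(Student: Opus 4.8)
The plan is to reduce the statement to producing, for each $k$, a single \emph{bisector-free ball} of radius $\sfrac{\tau}{2}$ inside the $k$-th Brillouin zone of a suitable point $b_k \in P = \varphi(\Zspace^d)$. If an open ball $B(c,\sfrac{\tau}{2})$ meets none of the bisectors of $b_k$ with the other points of $P$, then the rank of $b_k$ among the distances $\Edist{x}{q}$, $q \in P$, is constant as $x$ ranges over the ball, so the ball lies in a single chamber of $\BZone{k}{b_k}{P}$. That chamber then has diameter at least $\tau$ and volume at least $\Volume{B(c,\sfrac{\tau}{2})} = \nu_d [\sfrac{\tau}{2}]^d$, exactly as claimed. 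Everything thus reduces to building one perturbation $\varphi$ in which such a ball exists for every $k$.

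Translating $b_k$ to the origin for the local analysis and writing $r = \norm{c}$, the distance from $c$ to the bisector of $b_k$ and a point $p$ is $|r^2 - \Edist{c}{p}^2| / (2\norm{p})$. Two observations drive the construction. First, it is \emph{local}: since $\Edist{c}{p} \geq \norm{p} - r$, every $p$ with $\norm{p} \geq 4r$ has bisector at distance at least $r$ from $c$, so only the $O(k)$ points within distance $4r$ of $b_k$ can obstruct the ball. Second, since $\norm{p} \leq r + \Edist{c}{p}$, the clean \emph{sufficient} condition $|r - \Edist{c}{p}| > \tau$ forces $|r^2 - \Edist{c}{p}^2| > \tau\norm{p}$ and hence bisector distance exceeding $\sfrac{\tau}{2}$. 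In words, $B(c,\sfrac{\tau}{2})$ is bisector-free as soon as no point of $P \setminus \{b_k\}$ lies in the open shell $\{x : |\Edist{x}{c} - r| < \tau\}$ of thickness $2\tau$ around the sphere $S(c,r)$ through $b_k$.

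First I would fix well-separated sites $b_k = L_k \in \Zspace^d$ with $\norm{L_k} \to \infty$ fast enough that the balls $B(L_k, 4\varrho_k)$, with $\varrho_k = \sqrt[d]{\sfrac{k}{\nu_d}}$, are pairwise disjoint and avoid $B(0,1)$; this leaves $0$ and every generator unperturbed, so $\varphi(0)=0$, and lets me define $\varphi$ independently on each site. Near $L_k$ I would choose a center $c_k$ at distance $\varrho_k$ from $L_k$ and \emph{evacuate} the shell around $S(c_k,\varrho_k)$: every integer point inside the open $2\tau$-shell (other than the generator $L_k$ itself, which lies on $S(c_k,\varrho_k)$ but is excluded from the bisector condition) is pushed radially, with respect to $c_k$, onto the nearer bounding sphere. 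Because the shell has half-thickness $\tau$, each displacement is at most $\tau$, so the combined perturbation has magnitude at most $\tau$; choosing $c_k$ generic—so that no integer point besides $L_k$ lies exactly on $S(c_k,\varrho_k)$ and no two evacuated points share a ray through $c_k$—keeps $\varphi$ injective and makes all inequalities strict. By the locality remark and the shell characterization, $B(c_k,\sfrac{\tau}{2})$ is then bisector-free.

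It remains to place $B(c_k,\sfrac{\tau}{2})$ in the $k$-th zone, i.e.\ to arrange that exactly $k-1$ points of $P$ are strictly closer to $c_k$ than $b_k$. Since the shell is void, this count is constant as $\varrho_k$ varies over an interval of length $\tau$, and by the Gauss-circle estimate \eqref{eqn:points} together with Theorem~\ref{thm:width_for_integer_lattices} it equals $k-1$ for $\varrho_k$ near $\sqrt[d]{\sfrac{(k-1)}{\nu_d}}$; a small adjustment of $\varrho_k$ or $c_k$ pins it to exactly $k-1$. The case $k=1$ is immediate, as any Voronoi chamber already has diameter exceeding $\tau$. I expect the main obstacle to be the uniform quantitative clearance of the second paragraph: one must verify $|r^2 - \Edist{c}{p}^2| > \tau\norm{p}$ for \emph{every} obstructing $p$, and the extreme case $\norm{p} = r + \Edist{c}{p}$—points on the far side of $b_k$—is precisely where the half-thickness $\tau$ of the evacuated shell is forced, so the displacement bound $\tau$ is tight; taming the borderline collinear configurations is exactly what the genericity of $c_k$ provides.
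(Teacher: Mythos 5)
Your construction is, at its core, the same as the paper's: the paper likewise evacuates the $\tau$-neighborhood of a sphere through the site by pushing each nearby lattice point radially (with respect to the chosen center) out of the shell, bounds each displacement by $\tau$, concludes that the ball of radius $\sfrac{\tau}{2}$ around the center lies in a single chamber, and makes the construction simultaneous over all $k$ by placing the sites far apart and invoking Theorem~\ref{thm:width_for_perturbed_integer_lattices}. (The paper argues the single-chamber step via inclusions of the balls $B(y,\norm{y})$ for $\Edist{y}{c_k} < \sfrac{\tau}{2}$, while you argue via the distance from $c_k$ to the bisectors; these are equivalent, and your computation that an empty $2\tau$-shell forces $|r^2 - \Edist{c}{p}^2|/(2\norm{p}) > \sfrac{\tau}{2}$ is correct.)

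The genuine gap is in your final step, where you must arrange that exactly $k-1$ points of $P$ lie strictly closer to $c_k$ than $b_k$. You claim the Gauss-circle estimate \eqref{eqn:points} gives a count of $k-1$ for $\varrho_k$ near $\sqrt[d]{\sfrac{(k-1)}{\nu_d}}$. This is false: \eqref{eqn:points} determines the number of lattice points in a ball only up to an error of order $k^{(d-1)/d}$, so for large $k$ the count at that radius can miss $k-1$ by arbitrarily much; no volume estimate can pin an exact count. Moreover, the ``small adjustment'' is not free, because the radius and the center are not independent: the site $L_k$ must lie exactly on the sphere, so varying $\varrho_k$ means moving $c_k$, which changes which lattice points are inside the ball and which are evacuated---that is, it changes the very construction being adjusted. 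The clean repair is exactly the paper's opening move: choose $c_k$ in the interior of a chamber of $\BZone{k}{L_k}{\Zspace^d}$ of the \emph{unperturbed} lattice. Such points exist because this zone has nonempty interior (it has unit volume \cite{Bie39}; alternatively, note that along a generic ray emanating from $L_k$ the number of lattice points strictly inside the ball through $L_k$ centered at the moving point is a step function increasing from $0$ in steps of $1$, so every value $k-1$ is attained, as in the proof of Lemma~\ref{lem:thickened_sphere}). For such a $c_k$, exactly $k-1$ lattice points lie strictly inside $B(c_k, \Edist{c_k}{L_k})$ and all others strictly outside; since your evacuation moves inside points inward and outside points outward, these counts are preserved, and the rest of your argument goes through verbatim.
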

\begin{proof}
  Let $\tau < \frac{1}{2}$. 
  We first define a local perturbation, $\varphi_k$, to create a large chamber in the $k$-th Brillouin zone of a specific point, $\varphi_k (0) = 0$, and for a specific integer, $k$.
  For $k=1$ no perturbation is needed because the $1$-st Brilloun zone (the Voronoi domain) of the integer lattice contains a ball of radius $\frac{1}{2}$.

  \smallskip
  For $k \geq 2$, we construct $P = \varphi_k (\Zspace^d)$ with $\varphi_k (0) = 0$ and %magnitude at most $\tau$.
  $\Edist{a}{\varphi_k (a)} \leq \tau$ for every $a \in \Zspace^d \setminus \{ 0 \}$.
  Let $x$ be an interior point of $\BZone{k}{0}{\Zspace^d}$, which implies that $k-1 \geq 1$ integer points lie in the interior of $B (x, \norm{x})$, $0$ lies on its boundary, and all other integer points lie outside the closed ball.
  We have $\norm{x} > \frac{1}{2}$, else the ball could not contain an integer point in its interior.
  We construct $P$ by moving the points $a \in \Zspace^d$ % \setminus \{ 0 \}$
  other than the origin 
  orthogonally away from the sphere bounding $B (x, \norm{x})$ if their distance to the closest point on the sphere is less than $\tau$.
  Letting $\delta = | \Edist{x}{a} - \norm{x} |$ be this distance, we define
  \begin{align}
    \varphi_k (a)  &=  \left\{ \begin{array}{cl}
        a - (\tau - \delta) \cdot \tfrac{x-a}{\Edist{x}{a}}  &  \mbox{\rm if~~~~~~\,} \norm{x} < \Edist{x}{a} < \norm{x} + \tau, \\
        a + (\tau - \delta) \cdot \tfrac{x-a}{\Edist{x}{a}}  &  \mbox{\rm if~} \norm{x} - \tau < \Edist{x}{a} < \norm{x} , \\
        a     &  \mbox{\rm otherwise.}
      \end{array} \right.
  \end{align}
  The integer points with distance at least $\tau$ from the sphere remain where they are, and the others are moved to a location at distance $\tau$ from the sphere.
  Since $\tau < \frac{1}{2}$, the images of the integer points are distinct.
  Let $y \in \Rspace^d$ have distance less than $\frac{\tau}{2}$ from $x$.
  By construction, every point of $P$ in the interior of $B(x, \norm{x})$ is in the closure of $B(x, \norm{x}-\tau)$ which is included in the interior of $B(y, \norm{y})$ because the distance between the centers of these balls is smaller than the difference of the radii. 
  Similarly, every point of $P$ outside the closure of $B(x, \norm{x})$ is outside the interior of $B(x, \norm{x}+\tau)$ and thus also outside the closure of $B(y, \norm{y})$.
  Hence, $x$ and $y$ belong to the same chamber in the $k$-th Brillouin zone of $0 \in P$.
  By construction, this chamber contains a ball with radius $\frac{\tau}{2}$ and therefore has the claimed diameter and volume.

  \smallskip
  Note that the above construction can be performed for each $k$ with a different point $b_k \in \Zspace^d$ instead of $0$. 
  By Theorem~\ref{thm:width_for_perturbed_integer_lattices}, additional perturbations of the integer points outside the ball $B(b_k, \sqrt[d]{\sfrac{k}{\nu_d}} + \sfrac{\sqrt{d}}{2} + \tau )$
  cannot destroy the large chamber created by the perturbation.
  Hence, we can choose the points $b_k$ with increasing distance from each other so that the respective balls do not intersect and thus the modifications do not interfere, yielding one perturbation $\varphi$ that creates large chambers for every $k$. 
\end{proof}

%%%%%%%%%%%%%%%%%%%%%%%%%%%%%%%%%%%%%%%%%%%%%%%
\subsection{Size Experimentally}
\label{sec:6.3}
%%%%%%%%%%%%%%%%%%%%%%%%%%%%%%%%%%%%%%%%%%%%%%%

We illustrate Theorems~\ref{thm:size_for_integer_lattices} and \ref{thm:size_for_perturbed_integer_lattices} by showing the maximum area and maximum diameter of the chambers in the $k$-th Brillouin zone of $0$ in Figure~\ref{fig:maxareadiameter}.
For $\Zspace^2$, both quantities tend to zero as $k$ goes to infinity, which is consistent with the graphs in the two panels, where we multiply with $k^{1/2}$ and $k^{1/4}$, respectively.
For perturbations of $\Zspace^2$, there are examples for which both measures stay above a positive constant even for arbitrarily large $k$, in which the constant depends on the strength of the perturbation.
In our experiment, we pick a perturbation at random, and it may be unlikely that we get one whose maximum area and maximum diameter do not tend to zero.
\begin{figure}[hbt]
  \centering
    \vspace{0.0in}
     \includegraphics[width=0.62\textwidth]{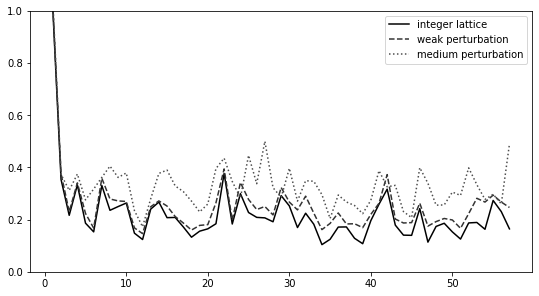}
     \includegraphics[width=0.62\textwidth]{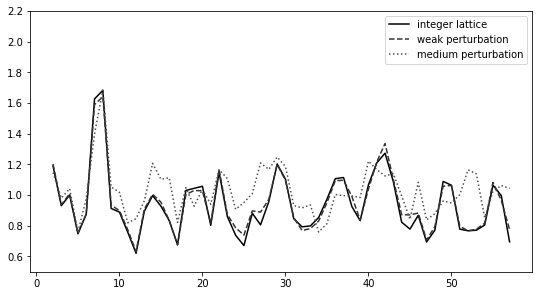}
    \caption{\emph{Top:} the maximum area of a chamber in the $k$-th Brillouin zone of $0$ in $\Zspace^2$ times $k^{{1}/{2}}$, and the same for two perturbations of the integer lattice.
    \emph{Bottom:} the maximum diameter of a chamber in the $k$-th Brillouin zone of $0$ times $k^{{1}/{4}}$ in the same sets.
    Similar to Figure~\ref{fig:width}, the curves of the weak and medium perturbations may be inaccurate beyond $k = 56$ and $k = 52$, respectively.}
  \label{fig:maxareadiameter}
\end{figure}

%%\clearpage
%%%%%%%%%%%%%%%%%%%%%%%%%%%%%%%%%%%%%%%%%%%%%%%
%%%%%%%%%%%%%%%%%%%%%%%%%%%%%%%%%%%%%%%%%%%%%%%
\section{Discussion}
\label{sec:7}
%%%%%%%%%%%%%%%%%%%%%%%%%%%%%%%%%%%%%%%%%%%%%%%
%%%%%%%%%%%%%%%%%%%%%%%%%%%%%%%%%%%%%%%%%%%%%%%

Brillouin zones originate with L\'{e}on Brillouin's work on the propagation of electron waves in crystal lattices \cite{Bri46}.
This paper addresses fundamental geometric and combinatorial questions about these zones.
The study is interesting already for the integer lattices, which together with their perturbations are the focus of this paper.
We expect that most of our findings extend to general lattices in Euclidean space, if not verbatim then in spirit.
Some of our results also extend to broader categories, such as periodic sets and Delone sets; see the comments following the proofs of the theorems throughout this paper.
The reported work opens a number of questions for future inquiry:
\smallskip \begin{itemize}
  \item[1] Are the bounds proved throughout this paper tight?
    In particular:
    \smallskip \begin{itemize}
      \item[a] Can the $O(k)$ bound on the number of chambers of $0 \in \Zspace^2$ proved in Theorem~\ref{thm:chambers_for_2D_integer_lattice} be complemented with an $\Omega (k)$ bound for the same quantity?
      \item[b] Can the upper bound on the number of chambers in the $k$-th Brillouin zone of $0 \in \Zspace^d$ given in Theorem~\ref{thm:chambers_for_integer_lattices} be improved to $O(k^{d-1})$?
      Equivalently, does the inversion of $\Zspace^d$ through the unit sphere in $\Rspace^d$ have only $O(k^{d-1})$ $k$-sets?
      \item[c] Is the bound on the maximum diameter of a chamber in the $k$-th Brillouin zone of $0 \in \Zspace^d$ proved in Theorem~\ref{thm:size_for_integer_lattices} asymptotically tight?
      Figure~\ref{fig:maxareadiameter} suggests it is in $\Rspace^2$.
    \end{itemize} \smallskip
  \item[2] Recall the notion of distortion of the boundary of the $k$-th Brillouin zone introduced in Section~\ref{sec:4.3} (see Figure~\ref{fig:perimeter}).
  Prove or disprove that the distortion converges to $\sfrac{4}{\pi}$, which we note is the universal constant for the distortion of Voronoi paths in the plane \cite{BTZ00,EdNi21}.
\end{itemize}

%\section*{Acknowledgments} 

%%%%%%%%%%%%%%%%%%%%%%%%%%%

%\bibliographystyle{siamplain} %I think this command only makes sense when using bibtex.
%\bibliography{references} %replace by bibtex file.

\clearpage
\appendix

%%%%%%%%%%%%%%%%%%%%%%%%%%%%%%%%%%%%%%%%%%%%%%%
%%%%%%%%%%%%%%%%%%%%%%%%%%%%%%%%%%%%%%%%%%%%%%%
\section{Computational Background}
\label{app:A}
%%%%%%%%%%%%%%%%%%%%%%%%%%%%%%%%%%%%%%%%%%%%%%%
%%%%%%%%%%%%%%%%%%%%%%%%%%%%%%%%%%%%%%%%%%%%%%%

We support our theoretical findings with data that counts and measures Brillouin zones and their chambers in $\Rspace^2$.
To generate this data for the integer lattice, we use a cut-off,
$m > 0$, and construct the arrangement defined by the bisectors of $0$ and all integer points in $[-m,m]^2 \setminus \{0\}$, denoted $\Acal_m$.
There are about $4 m^2$ lines forming an arrangement of $O(m^4)$ vertices, edges, and chambers.
Using a classic incremental algorithm, $\Acal_m$ can be computed in time $O(m^4)$ \cite[Chapter 7]{Ede87}.
We use $\Acal_m$ to collect the data, and for this we need to know how many Brillouin zones in $\Acal_m$ are also in $\Acal_\infty$.

\smallskip
Similarly, we collect the data for perturbations of $\Zspace^2$ by constructing the corresponding arrangements of perturbed bisectors.
As explained in Section~\ref{sec:2.5}, we require that $0 \in \Zspace^2$ is not perturbed, and we call the perturbation \emph{weak}, \emph{medium}, and \emph{strong} if every integer point, $a \in \Zspace^2$, is mapped uniformly at random to $\varphi (a) \in a + [-\tau, \tau]^2$, in which $\tau = 0.02$, $0.10$, and $0.50$, respectively.
Write $\Acal_m (\varphi)$ for the arrangement of bisectors defined by $0$ and all $\varphi (a)$ with $a \in [-m,m]^2 \setminus \{0\}$.
Again we need to know how many Brillouin zones of $\Acal_m (\varphi)$ are also in $\Acal_\infty (\varphi)$.

\begin{lemma}[\#Correct Brillouin Zones]
  \label{lem:correct_Brillouin_zones}
  Let $\varphi \colon \Zspace^2 \to \Rspace^2$ be a perturbation of strength $\tau$.
  For every $k < \frac{\pi}{4} [m+1-\sqrt{2}- (2\sqrt{2}+1) \tau]^2$, the $k$-th Brillouin zone of $0$ in $\Acal_m (\varphi)$ is also the $k$-th Brillouin zone of $0$ in $\Acal_\infty (\varphi)$.
\end{lemma}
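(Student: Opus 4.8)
The plan is to show that only the bisectors coming from points $\varphi(a)$ with $a \in [-m,m]^2$ can influence the first $k$ Brillouin zones, so that discarding the remaining (``far'') bisectors changes nothing. Write $P = \varphi(\Zspace^2)$ and $P_m = \varphi([-m,m]^2 \cap \Zspace^2)$, so that $\Acal_\infty(\varphi)$ and $\Acal_m(\varphi)$ are their bisector arrangements. First I would bound the outer radius of the zone: a strength-$\tau$ perturbation has Euclidean magnitude at most $\tau\sqrt{2}$, so Theorem~\ref{thm:width_for_perturbed_integer_lattices} (with $d=2$, $\nu_2=\pi$) gives $R_k(0) < \sqrt{k/\pi} + \sqrt{2}/2 + \tau\sqrt{2}$. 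On the other side, every omitted point $p=\varphi(a)$ with $a \notin [-m,m]^2$ has $\norm{a}_\infty \geq m+1$, and since each coordinate is displaced by at most $\tau$, it satisfies $\norm{p} \geq m+1-\tau$.

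Next I would combine these two estimates. A short rearrangement (take square roots and collect the $\tau$ terms) shows that the hypothesis $k < \tfrac{\pi}{4}[m+1-\sqrt{2}-(2\sqrt{2}+1)\tau]^2$ is exactly equivalent to $2R_k(0) < m+1-\tau \leq \norm{p}$ for every far point $p$. The bisector of $0$ and $p$ thus lies at distance $\norm{p}/2 > R_k(0)$ from the origin, and for any $x \in \overline{B(0,R_k(0))}$ the estimate $\scalprod{x}{p} \leq \norm{x}\,\norm{p} \leq R_k(0)\norm{p} < \norm{p}^2/2$ shows that $x$ is strictly closer to $0$ than to $p$; equivalently, no far point lies in $\overline{B(x,\norm{x})}$. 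Hence, for every $x \in \overline{B(0,R_k(0))}$, the far points contribute to neither inequality in Definition~\ref{dfn:Brillouin_zones}, so the separation counts — the numbers of points strictly closer than, and at most as close as, $0$ — agree for $P$ and for $P_m$. Since $\domain{k}{0}{P} \subseteq \overline{B(0,R_k(0))}$, this already shows that $\BZone{k}{0}{P}$ and $\BZone{k}{0}{P_m}$ coincide throughout the infinite zone.

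The hard part will be ruling out the reverse leakage: a priori the truncated zone $\BZone{k}{0}{P_m}$ could extend beyond $\overline{B(0,R_k(0))}$, where we have deleted bisectors and so have no direct control. To exclude this I would invoke monotonicity of the separation count along rays, exactly as in the crossing analysis of Section~\ref{sec:4.2}. Writing $x = tu$ with $u \in \Sspace^{1}$ and $t \geq 0$, a point $p$ satisfies $\norm{x-p} < \norm{x}$ iff $\scalprod{u}{p} > 0$ and $t > \lambda(p) = \tfrac{1}{2}\norm{p}^2/\scalprod{u}{p}$; therefore, for any finite point set, the number of points strictly inside $B(tu,t)$ is non-decreasing in $t$. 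Applied to $P_m$, this makes $\domain{k}{0}{P_m}$ star-shaped, meeting each ray in an interval $[0,\beta)$. Along the ray the $P_m$- and $P$-counts agree up to radius $R_k(0)$, and the $P$-count already reaches $k$ at some radius at most $R_k(0)$ (because $\domain{k}{0}{P}\subseteq\overline{B(0,R_k(0))}$); by monotonicity the $P_m$-count then stays at least $k$ for all larger $t$, forcing $\beta$ to equal that radius. Thus $\domain{k}{0}{P_m} = \domain{k}{0}{P}$, and the identical argument with $k-1$ in place of $k$ gives $\domain{k-1}{0}{P_m} = \domain{k-1}{0}{P}$. Taking closures of the set differences then yields $\BZone{k}{0}{P_m} = \BZone{k}{0}{P}$, as claimed. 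Points lying exactly on bisectors can be handled by the closedness of the zones noted after Definition~\ref{dfn:Brillouin_zones}, or by perturbing the direction $u$ slightly.
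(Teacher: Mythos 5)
Your route is the same as the paper's: bound $R_k(0)$ by Theorem~\ref{thm:width_for_perturbed_integer_lattices} with magnitude $\sqrt{2}\tau$, note that every discarded point $p = \varphi(a)$, $a \notin [-m,m]^2$, satisfies $\norm{p} \geq m+1-\tau$, and check that the hypothesis on $k$ forces $2R_k(0) < m+1-\tau$. (Minor wording: this is an implication, not an equivalence, since $R_k(0)$ need not attain its upper bound.) The paper's proof essentially stops there, leaving implicit exactly what you call reverse leakage; your monotonicity-of-the-count-along-rays argument is the right tool for making that step explicit, and it is a genuine addition.

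However, one inference in that step is false as stated: you claim the $P$-count ``already reaches $k$ at some radius at most $R_k(0)$'' because $\domain{k}{0}{P} \subseteq \overline{B(0,R_k(0))}$. The containment only yields that the strict count is at least $k$ at radii \emph{strictly greater} than $R_k(0)$. Along a ray the strict count $t \mapsto \#\{p \,:\, \lambda(p) < t\}$ is left-continuous (each $p$ lies strictly inside $B(tu,t)$ exactly for $t > \lambda(p)$, an open condition), so it can still be at most $k-1$ at $t = R_k(0)$ and jump to $k$ only beyond; this is precisely what happens on any ray through a point of $\BZone{k}{0}{P}$ realizing the distance $R_k(0)$, which exists because the zone is closed. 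On such rays your argument does not close: you established agreement of the $P$- and $P_m$-counts only up to radius $R_k(0)$, while the radii where the $P$-count certifiably reaches $k$ all lie beyond it. The repair is already contained in your own estimate: $\scalprod{x}{p} \leq \norm{x}\,\norm{p} < \norm{p}^2/2$ holds whenever $\norm{x} < \norm{p}/2$, so the two counts in fact agree on the whole open disk of radius $\frac{1}{2}(m+1-\tau)$, which strictly contains $\overline{B(0,R_k(0))}$ because $2R_k(0) < m+1-\tau$. Picking any $t$ with $R_k(0) < t < \frac{1}{2}(m+1-\tau)$ then gives a radius where the counts agree and the $P$-count is at least $k$, and your monotonicity argument finishes the proof. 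This is, in effect, why the paper phrases the agreement in terms of the disk of radius $\frac{1}{2}[m+1-\tau]$ rather than $\overline{B(0,R_k(0))}$.
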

\begin{proof}
  Let $b \in \Zspace^2 \setminus [-m,m]^2$ and note that the distance of $0$ from the bisector defined by $0$ and $\varphi (b)$ is at least $\frac{1}{2}[m+1-\tau]$.
  The restriction of $\Acal_m (\varphi)$ to the disk with center $0$ and radius $\frac{1}{2} [m+1-\tau]$ is therefore the same as the restriction of $\Acal_\infty (\varphi)$ to the disk.
  By Theorem~\ref{thm:width_for_perturbed_integer_lattices}, the first $k$ Brillouin zones of $0$ for a perturbation with magnitude $\sqrt{2} \tau$ are contained in the disk of radius $R_k(0) < \sqrt{\sfrac{k}{\pi}} + \sfrac{\sqrt{2}}{2} + \sqrt{2} \tau$.
  Combining these bounds gives $\sqrt{\sfrac{k}{\pi}} + \sfrac{\sqrt{2}}{2} + \sqrt{2} \tau < \frac{1}{2} [m+1-\tau]$, and therefore $k < \tfrac{\pi}{4} [m+1-\sqrt{2}-(2\sqrt{2}+1) \tau]^2$ is a sufficient condition for $R_k(0) < \frac{1}{2}[m+1-\tau]$.
\end{proof}
For example, setting $m = 9$ and $\tau = 0.02$, $0.1$, and $0.5$, we get $k < 56.8$, $52.8$, and $34.9$, respectively.
In words, the data we compute for weak, medium, and strong perturbations are correct up to $k = 56$, $52$, and $34$, respectively, and possibly contaminated by missing bisectors beyond these values of $k$;
see the captions of Figures~\ref{fig:width}, \ref{fig:area}, \ref{fig:perimeter}, \ref{fig:chambers}, and \ref{fig:maxareadiameter}.

\smallskip
Two or more lines in $\Rspace^2$ are \emph{concurrent} if they all share a common point.
A non-trivial numerical aspect of the computations is the recognition of concurrent bisectors as such.
This is especially important when we count chambers in the highly degenerate bisector arrangement defined by the integer lattice.
We use rational arithmetic to recognize concurrency, as we now describe.
Let $a = (a_1,a_2)$ and $b = (b_1,b_2)$ be two linearly independent integer points.
The bisectors of $0, a$ and $0, b$ intersect in the center of the unique circle that passes through $0,a,b$.
To compute this point, we solve the linear system
\begin{align}
    2 a_1 x_1 + 2 a_2 x_2  &=  a_1^2 + a_2^2 , \\
    2 b_1 x_1 + 2 b_2 x_2  &=  b_1^2 + b_2^2 .
\end{align}
Using Cramer's rule, we get $x_1 = {\Delta_1}/{\Delta}$ and $x_2 = {\Delta_2}/{\Delta}$, in which $\Delta = 2 a_1 b_2 - 2 b_1 a_2$, $\Delta_1 = (a_1^2+a_2^2)b_2 - (b_1^2+b_2^2)a_1$, and $\Delta_2 = a_1(b_1^2+b_2^2) - b_1(a_1^2+a_2^2)$.
Assuming the coordinates are integers of absolute size at most $m$, we have $|\Delta|, |\Delta_1|, |\Delta_2| \leq 4 m^3$.
In this paper, we use $m \leq 9$, so each of these integers can be represented by $1 + \log_2 (4 m^3) \leq 13$ bits.

\smallskip
To implement a perturbation, we scale up the integer lattice by a factor $p = 10000$ and randomly pick $\varphi (a)$ from the integer points $p a + [-q,q]^2$, in which $q = 200, 1000, 5000$ depending on whether we desire a \emph{weak}, \emph{medium}, or \emph{strong} perturbation.
This way the coordinates of the perturbed points are still integers so that concurrent lines can be recognized with rational arithmetic as described above.
By construction, the coordinates of the perturbed point have absolute size at most $pm+q$.
Accordingly, the relevant determinants satisfy $|\Delta|, |\Delta_1|, |\Delta_2| \leq 4[pm+q]^3$.
For the above choices, we have $pm+q \leq 95000 < 2^{17}$, so each of them can be represented by $1 + \log_2 (4 [pm+q]^3) \leq 54$ bits.
This is still well within the limit of a $64$-bit computer word.

\end{document}